\title{Large deviations of the free energy in the O'Connell-Yor polymer}
\author{Chris Janjigian\footnote{Department of Mathematics, University of Wisconsin - Madison, \href{mailto:janjigia@math.wisc.edu}{janjigia@math.wisc.edu}.}}
\theoremstyle{plain}
    \newtheorem{theorem}{Theorem}
	\numberwithin{theorem}{section}
    \newtheorem{lemma}[theorem]{Lemma}
    \newtheorem{proposition}[theorem]{Proposition}
    \newtheorem{corollary}[theorem]{Corollary}
\theoremstyle{definition}
    \newtheorem{remark}[theorem]{Remark}
\theoremstyle{remark}
\begin{document}
\maketitle

\begin{abstract}
We investigate large deviations of the free energy in the O'Connell-Yor polymer through a variational representation of the positive real moment Lyapunov exponents of the associated parabolic Anderson model. Our methods yield an exact formula for all real moment Lyapunov exponents of the parabolic Anderson model and a dual representation of the large deviation rate function with normalization $n$ for the free energy.
\end{abstract}

\section{Introduction}

This paper studies the model of a 1+1 dimensional semi-discrete directed polymer in a random environment due to O'Connell and Yor \cite{OY01}. Directed polymers in random environments were introduced in the statistical physics literature in \cite{HH85}, with mathematical work following in \cite{IS88} and \cite{Bo89}. A physically motivated mathematical introduction to this family of models can be found in the survey article \cite{CSY04}. These models have attracted substantial attention recently, in part due to the conjecture that under suitable regularity assumptions they lie in the KPZ universality class. For a discussion of this conjecture and further references, we refer the reader to \cite{Co12}.

Our study will focus on the point-to-point polymer partition function of this model, which \cite{OY01} defines for $n \in \mathbb{N}$ and $\beta > 0$ as
\begin{align}\label{OYpoly}
Z_n(\beta) &= \int_{0 < s_1 < \dots < s_{n-1} < n}\exp\left[\beta \left(B_1(0,s_1) + \dots + B_n(s_{n-1},n) \right) \right]ds_1 \dots ds_{n-1},
\end{align}
where $\{B_i\}_{i=1}^\infty$ is a family of i.i.d. standard Brownian motions. This partition function is the normalizing constant for a quenched polymer measure on non-decreasing c\`adl\`ag paths $f:\mathbb{R}_+ \to \mathbb{N}$ with $f(0) = 1$ and $f(n) = n$. Up to a constant factor, $Z_n(\beta)$ is the conditional expectation of a functional of a Poisson path on the event that the path is at $n$ at time $n$. More precisely, let $\pi(\cdot)$ be a unit rate Poisson process on $\mathbb{R}_+$ which is independent of the family $\{B_i\}_{i=1}^\infty$ and denote by $\mathcal{E}$ the expectation with respect to the law of this Poisson process. With the notation 
\begin{align}
A_{n,t} = \{s_1, \dots s_{n-1} : 0 < s_1 < \dots < s_{n-1} < t \}, \label{Weyldef}
\end{align} 
we have
\begin{align}
\mathcal{E}_{\pi(0)=1} \left[ e^{\int_0^n \beta dB_{\pi(s)}(s)} | \pi(n) =n \right] &= |A_{n,n}|^{-1} Z_n(\beta). \label{condexp}
\end{align}
The prefactor of $|A_{n,n}|^{-1}$ accounts for the fact that the ordered jump points of $\pi$ on $[0,n]$ conditioned on $\pi(n) = n$ are uniformly distributed on the Weyl chamber $A_{n,n}$.

The O'Connell-Yor polymer model was originally introduced in \cite{OY01} in connection with a generalization of the Brownian queueing model. Based on the work of Matsumoto and Yor \cite{MY01}, O'Connell and Yor were able to show the existence of a stationary version of this model satisfying an analogue of Burke's theorem for M/M/1 queues. The Burke property makes the O'Connell-Yor polymer one of the four polymer models considered exactly solvable, the others being the continuum directed polymer studied in \cite{ACQ11}, the log-gamma polymer introduced by Sepp\"al\"ainen in \cite{Sep12}, and the strict-weak gamma polymer studied in \cite{CSS14} and \cite{OO14}. A precise statement of the Burke property for this model and an outline of how this property leads to the main result of this paper is given in subsection \ref{subsecstat}. Subsequent work on the representation theoretic underpinnings of the exact solvability of these models can be found in the work of Borodin and Corwin on Macdonald processes \cite{BC14a} and the work of O'Connell connecting this model to the quantum Toda lattice \cite{O12}.

As one of the few tractable models in the KPZ universality class, this polymer model has been extensively studied: Moriarty and O'Connell \cite{MO07} rigorously computed the free energy; Sepp\"al\"ainen and Valk\'o \cite{SV10} identified the scaling exponents; Borodin, Corwin, and Ferrari \cite{BCF14} showed that the model lies in the KPZ universality class by proving the Tracy-Widom limit for the free energy fluctuations; and Borodin and Corwin \cite{BC14b} proved a contour integral representation for the integer moments and computed their large $n$ asymptotics.

The main results of this paper are Theorems \ref{thm:MLE} and \ref{thm:LDP}, which compute the moment Lyapunov exponents and large deviation rate function with normalization $n$ for the free energy respectively. Theorem \ref{thm:MLE} can be thought of as an extension of the asymptotics studied in \cite{BC14b}. There, the authors use a contour integral representation for the integer moments of $Z_n(\beta)$ to compute the limit
\begin{align*}
\lim_{n \to \infty} \frac{1}{n} \log E\left[Z_{n}(\beta)^k\right]
\end{align*}
for any $k \in \mathbb{N}$. In \cite[Appendix A.1]{BC14b}, as part of a replica computation of the free energy, they conjecture an analytic continuation of their formula to $k >0$. We are able to compute the above limit for all $k \in \mathbb{R}$, confirming this conjecture.

The proofs of Theorems \ref{thm:MLE} and \ref{thm:LDP} follow an approach introduced by Sepp\"al\"ainen in \cite{Sep98}. Georgiou and Sepp\"al\"ainen used this method to compute the large deviation rate function with normalization $n$ for the free energy in the log-gamma polymer in \cite{GS13}. The key technical condition making this scheme tractable is the independence provided by the Burke property, which the log-gamma polymer shares with the O'Connell-Yor polymer. It is therefore natural to expect that the techniques of \cite{GS13} should also apply in this setting; we take this as our starting point.

Physically, we can view the parameter $\beta$ as an inverse temperature. In this sense, we can think of directed polymer models as positive temperature analogues of directed percolation. This leads to a natural coupling between the directed polymer and directed percolation models, which we take advantage of throughout the paper. The directed percolation model associated to the O'Connell-Yor polymer is Brownian directed percolation; see \cite{OY01} and \cite{HMO02} for a discussion of this model. A distributional equivalence between the last passage time in Brownian directed percolation and the largest eigenvalue of a standard GUE matrix was discovered independently by Baryshnikov  \cite[Theorem 0.7]{Bar01} and Gravner, Tracy, and Widom \cite{GTW01}, both in 2001. The known large deviations \cite{Ledoux} for top eigenvalues of GUE matrices give useful estimates in several of the proofs that follow. The precise results we use are collected in subsection \hyperref[GUEsub]{\ref*{GUEsub}} of the appendix.

This polymer and the log-gamma polymer are the only positive temperature polymer models for which precise large deviations have been studied. Precise left tail large deviations, which have non-universal scalings \cite{B09}, remain open for both models. The lattice Gaussian directed percolation and polymer models in one spatial dimension have left tail large deviations with normalization $\frac{n^2}{\log(n)}$ \cite[Theorem 1.2]{CGM09} \cite[p. 774]{B09}, while the Brownian directed percolation model has left tail large deviations with normalization $n^2$ \cite[(1.26)]{Ledoux}. It is natural to expect that the left tail large deviations for this model should follow those of Brownian directed percolation rather than the Gaussian lattice models, but we do not currently have a proof of this result.

 \textbf{Acknowledgements.} The author would like to thank Timo Sepp\"al\"ainen for suggesting this problem; Benedek Valk\'o for many helpful conversations; and the anonymous referees for their feedback, which helped to improve the presentation of the paper. The author also benefited greatly from attending Ivan Corwin's summer course on KPZ at MSRI.

\textbf{Notation.} When $b(\cdot)$ is Brownian motion and $s \leq t$, we adopt the convention $b(s,t) = b(t) - b(s)$. For $a,b \in [-\infty,\infty]$, we set $a \vee b = \max(a,b)$ and $a \wedge b = \min(a,b)$. The polygamma functions are denoted by $\Psi_k(x)$, where $\Psi_0(x) = \frac{d}{dx}\log \Gamma(x)$ and $\Psi_n(x) = \frac{d}{dx} \Psi_{n-1}(x)$.

For $f,g:\mathbb{R} \to (-\infty,\infty]$, the Legendre-Fenchel transform is defined by $f^*(\xi) = \sup_{x \in \mathbb{R}}\{x \xi - f(x)\}$ and the infimal convolution is defined by $f \square g(x) = \inf_{y \in \mathbb{R}}\{f(x-y) + g(y)\}$. For properties of these operators, we refer the reader to \cite{Rock}.

A random variable has a  $\Gamma(\theta,1)$ distribution if it has density $\Gamma(\theta)^{-1}x^{\theta - 1}e^{-x}1_{\{x>0\}}$ with respect to the Lebesgue measure on $\mathbb{R}$.

\section{Preliminaries and statement of results}
\subsection{Definition of the polymer model and statement of results}
Let $\{B_i\}_{i=0}^\infty$ be a family of independent two-sided standard Brownian motions. Define partition functions for $j,n \in \mathbb{Z}_+$ with $j < n$ and $s,t \in (0,\infty)$ with $s < t$ by
\begin{align}\label{polydefgr}
Z_{j,n}(u,t) &= \int\displaylimits_{u < u_j < \dots < u_{n-1} < t} e^{B_j(u,u_j) + \sum_{i= j+1}^{n-1} B_i (u_{i-1}, u_i) + B_{n}(u_{n-1},t)}du_j \dots du_{n-1}.
\end{align}
For the case $j = n$, we define
\begin{align}\label{polydefeq}
Z_{j,j}(u,t) &= e^{B_j(u,t)}.
\end{align}
We will refer to the $j,n$ variables as space and the $u,t$ variables as time. Translation invariance of Brownian motion and our assumption that the environment is i.i.d. immediately imply that the distribution of the partition function is shift invariant. 

It follows from Brownian scaling that for $\beta > 0$ and $n > 1$ we have
\begin{align*}
Z_n(\beta) \stackrel{\tiny{d}}{=} \beta^{-2(n-1)}Z_{1,n}(0,\beta^2 n).
\end{align*}
For the remainder of the paper, we will only consider partition functions of the form $Z_{j,n}(u,t)$; results for these partition functions can be translated into results for $Z_n(\beta)$ using this distributional identity.

Next, we argue that the partition function is supermultiplicative: that is, for $j, n,m \in \mathbb{Z}_+$, $v \geq 0$, and $u,t > 0$ ,
\begin{align}\label{gensubmult}
Z_{j,j+n+m}(v,v+t+u) &\geq Z_{j,j+n}(v,v+t)Z_{j+n,j+n+m}(v+t,v+t+u).
\end{align}
For notational convenience, we will consider the case $j= v = 0$.  For $t,u > 0$, we have
\begin{align*}
Z_{0,0}(0,t+u) = e^{B_0(0,t+u)} = e^{B_0(0,t)} e^{B_0(t,t+u)} = Z_{0,0}(0,t)Z_{0,0}(t,t+u).
\end{align*}
For $m,n \in \mathbb{N}$ and $t,u > 0$, we have
\begin{align}\label{submult}
Z_{0,n+m}(0,t+u) &= \int\displaylimits_{0 < u_0 < \dots < u_{n+m-1} < t+u} e^{B_0(0,u_0) + \sum_{i= 1}^{n+m-1} B_i (u_{i-1}, u_i) + B_{n+m}(u_{n+m-1},t + u)}du_0 \dots du_{n+m-1} \notag \\
&\geq \int\displaylimits_{\stackrel{0 < u_0 < \dots < u_{n+m-1} < t+u}{u_{n-1} < t < u_n}} e^{B_0(0,u_0) + \sum_{i= 1}^{n+m-1} B_i (u_{i-1}, u_i) + B_{n+m}(u_{n+m-1},t+u)}du_0 \dots du_{n+m-1} \notag \\
&= Z_{0,n}(0,t)Z_{n,n+m}(t,t+u).
\end{align}
When $m < n$ and $t,u > 0$, we decompose $\log Z_{m,n}(u,t)$ as follows:
\begin{align}
\log Z_{m,n}(u,t) &= B_{n}(t) - B_m(u) + \log C_{m, n}(u,t) \label{semimg}
\end{align}
where
\begin{align*}
C_{m,n}(u,t) &= \int_u^t \int_{u_{m}}^t \dots \int_{u_{n-2}}^t e^{B_m(u_m) + \sum_{i=m+1}^{n-1} B_i (u_{i-1}, u_i) - B_{n}(u_{n-1})}du_{n-1} \dots du_{m+1} du_m
\end{align*}
is strictly increasing in $t$ and strictly decreasing in $u$. It follows that
\begin{align*}
Z_{m,n}(0,t+u) &\geq Z_{m,n}(0,t)Z_{n,n}(t,t+u), \\
Z_{m,n}(0,t+u) &\geq Z_{m,m}(0,t)Z_{m,n}(t,t+u).
\end{align*}

The free energy for \hyperref[OYpoly]{(\ref*{OYpoly})} was computed in \cite{MO07}. We mention that, as in \cite[Lemma 4.1]{GS13}, once one knows the existence and continuity of the free energy, a variational problem similar to the one we study for the rate function in this paper can be used to compute the value of the free energy. We have
\begin{lemma} \cite{MO07} \label{lem:fe} Fix $s,t \in (0,\infty)$. Then the almost sure limit
\begin{align*}
\rho(s,t) &= \lim_{n \to \infty} \frac{1}{n} \log Z_{1, \lfloor ns \rfloor}(0,nt)
\end{align*}
exists and is given by
\begin{align*}
\rho(s,t) &= \min_{\theta > 0} \left \{ \theta t - s \Psi_0(\theta) \right \} = t \Psi_1^{-1}\left(\frac{t}{s}\right) - s \Psi_0\left(\Psi_1^{-1}\left(\frac{t}{s}\right)\right).
\end{align*} 
\end{lemma}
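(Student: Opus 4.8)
The plan is to establish existence by a superadditivity argument and then to identify the limit by comparison with the stationary (Burke) version of the polymer, following the method of \cite{MO07} and \cite[Lemma 4.1]{GS13}.

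\emph{Existence and regularity of $\rho$.} Fix the slope $\gamma=t/s$ and set $a_n=\log Z_{1,n+1}(0,n\gamma)$. Applying \eqref{gensubmult} with $j=1$, $v=0$ and using shift invariance gives $a_{n+m}\ge a_n+a'_m$ with $a'_m\stackrel{d}{=}a_m$; moreover the two factors on the right of \eqref{gensubmult} depend on the driving Brownian motions only through their increments over disjoint time intervals, exactly as in the derivation of \eqref{submult}, so $a'_m$ is independent of $a_n$. Thus $(a_n)$ is superadditive with i.i.d.\ increments. A crude tail bound gives $\E|a_1|<\infty$, and Jensen together with the exact Gaussian identity $\E Z_{1,n+1}(0,n\gamma)=|A_{n+1,n\gamma}|\,e^{n\gamma/2}$ shows $n^{-1}\E a_n$ is bounded above, so Kingman's subadditive ergodic theorem yields $a_n/n\to h(\gamma)$ almost surely and in $L^1$, with $h(\gamma)=\sup_n n^{-1}\E a_n$ finite and deterministic. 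The semimartingale decomposition \eqref{semimg} transfers this to $n^{-1}\log Z_{1,\lfloor ns\rfloor}(0,nt)\to s\,h(t/s)=:\rho(s,t)$, since the drift term there is $o(n)$ by the law of large numbers for Brownian motion and $C_{m,n}$ is monotone in its time arguments. Joint supermultiplicativity makes $\rho$ superadditive on $(0,\infty)^2$, the ray construction makes $\rho$ positively $1$-homogeneous, and these two facts force $\rho$ to be concave, hence continuous, on the open quadrant.

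\emph{Upper bound.} For each $\theta>0$ let $\widetilde Z^{\theta}$ be the partition function of the stationary polymer with boundary parameter $\theta$ described in subsection \ref{subsecstat}. A path that leaves the boundary almost immediately contributes essentially $Z_{1,\lfloor ns\rfloor}(0,nt)$ up to an $n$-independent random factor, so $n^{-1}\log\widetilde Z^{\theta}_{1,\lfloor ns\rfloor}(0,nt)\ge n^{-1}\log Z_{1,\lfloor ns\rfloor}(0,nt)+o(1)$; on the other hand the Burke property presents $\log\widetilde Z^{\theta}$ as a sum of increments of a stationary ergodic process with mean drift $\theta$ per unit time and $-\Psi_0(\theta)$ per unit space, so the law of large numbers gives $n^{-1}\log\widetilde Z^{\theta}_{1,\lfloor ns\rfloor}(0,nt)\to\theta t-s\Psi_0(\theta)$ almost surely. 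Combining, $\rho(s,t)\le\theta t-s\Psi_0(\theta)$ for every $\theta>0$, hence $\rho(s,t)\le\min_{\theta>0}\{\theta t-s\Psi_0(\theta)\}$. Since $\tfrac{d}{d\theta}(\theta t-s\Psi_0(\theta))=t-s\Psi_1(\theta)$ vanishes only at $\theta^{*}=\Psi_1^{-1}(t/s)$ and $\tfrac{d^2}{d\theta^2}(\theta t-s\Psi_0(\theta))=-s\Psi_2(\theta)>0$ (as $\Psi_2<0$ on $(0,\infty)$), the minimum is attained at $\theta^{*}$ and equals $t\Psi_1^{-1}(t/s)-s\Psi_0(\Psi_1^{-1}(t/s))$.

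\emph{Matching lower bound, and the main obstacle.} It remains to prove $\rho(s,t)\ge\theta^{*}t-s\Psi_0(\theta^{*})$. Decompose $\widetilde Z^{\theta^{*}}_{1,\lfloor ns\rfloor}(0,nt)$ according to the exit time $cn$ at which the path leaves the boundary. The contribution of exit times in $[0,\delta n]$ is at most $e^{o(n)}Z_{1,\lfloor ns\rfloor}(0,nt)$, by the monotonicity of $C_{m,n}$ in its time arguments and the $O(\sqrt n)$ size of the boundary and bulk fluctuations on $[0,\delta n]$. For exit time $cn$ with $c\in[\delta,t]$ the contribution is at most $\exp\big(n(-\theta^{*}c+\rho(s,t-c)+o(1))\big)$; invoking the upper bound $\rho(s,t-c)\le\phi(c)(t-c)-s\Psi_0(\phi(c))$ with $\phi(c)=\Psi_1^{-1}((t-c)/s)$ and differentiating, the identity $\Psi_1(\phi(c))=(t-c)/s$ gives $\tfrac{d}{dc}\big(-\theta^{*}c+\phi(c)(t-c)-s\Psi_0(\phi(c))\big)=-(\theta^{*}+\phi(c))<0$, so this bound stays, uniformly over $c\in[\delta,t]$, exponentially below $e^{n(\theta^{*}t-s\Psi_0(\theta^{*}))}$. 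Hence $\widetilde Z^{\theta^{*}}$ is, up to an $e^{o(n)}$ factor, no larger than $Z_{1,\lfloor ns\rfloor}(0,nt)$; combined with the exact asymptotics $n^{-1}\log\widetilde Z^{\theta^{*}}\to\theta^{*}t-s\Psi_0(\theta^{*})$ this forces $\liminf n^{-1}\log Z_{1,\lfloor ns\rfloor}(0,nt)\ge\theta^{*}t-s\Psi_0(\theta^{*})$, completing the identification. I expect the main obstacle to be making this exit-time decomposition rigorous: one needs a law of large numbers for the drifted boundary and a version of the first step that is uniform over the shifted bulk partition functions $Z_{1,\lfloor ns\rfloor}(cn,nt)$, for which the a priori continuity and monotonicity of $\rho$ are used, and it is here that the argument is genuinely confined to the interior of $(0,\infty)^2$.
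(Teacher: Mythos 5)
The paper does not actually prove this lemma: it is quoted from \cite{MO07}, and the only in-text hint at a proof is the remark after \eqref{gensubmult} that, once existence and continuity of the free energy are known, a variational/stationary-comparison argument as in \cite[Lemma 4.1]{GS13} identifies its value. Your proposal is precisely that standard route (Kingman superadditivity along a ray for existence and regularity, then upper bound by the stationary model via \eqref{coupling} and \eqref{statdecomp}, then a lower bound by an exit-point decomposition at the characteristic parameter $\theta^{*}=\Psi_1^{-1}(t/s)$), so in spirit it matches the method the paper points to, and the overall skeleton is sound.

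There are, however, concrete slips in the identification step. First, in the paper's normalization $Z_0^\theta(u)=e^{\theta u-B(u)}$, so an exit at time $cn$ contributes rate $+\theta^{*}c+\rho(s,t-c)$, not $-\theta^{*}c+\rho(s,t-c)$; the correct envelope computation gives $\frac{d}{dc}\bigl(\theta^{*}c+\phi(c)(t-c)-s\Psi_0(\phi(c))\bigr)=\theta^{*}-\phi(c)<0$ with $\phi(c)=\Psi_1^{-1}((t-c)/s)>\theta^{*}$, so the conclusion (strictly below $\theta^{*}t-s\Psi_0(\theta^{*})$ uniformly for $c\in[\delta,t]$) survives, but your stated derivative $-(\theta^{*}+\phi(c))$ is not the right quantity. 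Relatedly, the exit window $[0,\delta n]$ is bounded by $e^{\theta^{*}\delta n+o(n)}Z_{1,\lfloor ns\rfloor}(0,nt)$ rather than $e^{o(n)}Z_{1,\lfloor ns\rfloor}(0,nt)$, so you must carry the extra $\theta^{*}\delta$ and send $\delta\downarrow 0$ at the end. Second, and more substantively, the decomposition \eqref{coupling} has two families of terms: the time-axis integral you treat, and the spatial sum $\sum_{j}Z_j^\theta(0)Z_{j,\lfloor ns\rfloor}(0,nt)$ coming from paths that have already left the boundary by time $0$. Your lower-bound argument never addresses the latter; one needs the analogous strict inequality $-a\Psi_0(\theta^{*})+\rho(s-a,t)<\theta^{*}t-s\Psi_0(\theta^{*})$ for $a$ bounded away from $0$ (which again follows from strict convexity in $\theta$ and the fact that the minimizer for direction $(s-a,t)$ differs from $\theta^{*}$), together with the same small-$a$ window treatment. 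Finally, the almost sure law of large numbers you invoke for $\sum_{k\le\lfloor ns\rfloor}r_k^\theta(nt)$ is not literally an i.i.d.\ LLN since the summands are re-evaluated at time $nt$ for each $n$; a variance/Borel--Cantelli argument (or just convergence in probability, which suffices because the limit of $n^{-1}\log Z_{1,\lfloor ns\rfloor}(0,nt)$ is already known to be deterministic) is needed to make that step honest.
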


The main result of this paper is a computation of the real moment Lyapunov exponents of the parabolic Anderson model associated to \hyperref[polydefgr]{(\ref*{polydefgr})} and, through an application of the G\"artner-Ellis theorem, the large deviation rate function with normalization $n$ for the free energy of the polymer. Specifically, we have
\begin{theorem} \label{thm:MLE}
Let $s,t \in (0,\infty)$ and $\xi \in \mathbb{R}$. Then
\begin{align*} 
\Lambda_{s,t}(\xi) &= \lim_{n \to \infty} \frac{1}{n} \log E\left[e^{\xi \log Z_{1,\lfloor ns \rfloor}(0,nt)}\right] \\
&= \begin{cases}
\xi \rho(s,t) & \xi \leq 0 \\
\displaystyle\min_{\mu > 0} \left\{ t \left(\frac{\xi^2}{2} + \xi \mu \right) - s \log \frac{\Gamma(\mu + \xi)}{\Gamma(\mu)} \right\} & \xi >0
\end{cases}
\end{align*}
and $\Lambda_{s,t}(\xi)$ is a differentiable function of $\xi \in \mathbb{R}$.
\end{theorem}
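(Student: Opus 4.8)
The plan is to follow the scheme of Georgiou and Sepp\"al\"ainen \cite{GS13}: on $\{\xi>0\}$ extract the formula from the exactly solvable stationary O'Connell-Yor polymer furnished by the Burke property, on $\{\xi\le 0\}$ combine the law of large numbers in Lemma \ref{lem:fe} with a lower-tail estimate, and close the gap in each regime with matching bounds. A first step is that the limit exists before it is identified: supermultiplicativity \eqref{gensubmult}, together with the fact that the two blocks appearing there are driven by the Brownian motions over disjoint time intervals and are therefore independent, makes $n\mapsto \log E\bigl[Z_{1,\lfloor ns\rfloor}(0,nt)^\xi\bigr]$ approximately superadditive when $\xi>0$ and approximately subadditive when $\xi\le 0$, the approximation coming only from the $O(1)$ slack in the floor functions, which Fekete-type arguments absorb; hence $\Lambda_{s,t}(\xi)$ exists in $[-\infty,+\infty]$, with the finiteness to follow from the two-sided bounds, and for $\xi>0$ it equals the supremum over $n$ of the prelimits. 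The universally available half is $\Lambda_{s,t}(\xi)\ge \xi\rho(s,t)$ for all $\xi$: Jensen gives $E\bigl[Z_{1,\lfloor ns\rfloor}(0,nt)^\xi\bigr]\ge \exp\bigl(\xi\,E[\log Z_{1,\lfloor ns\rfloor}(0,nt)]\bigr)$, and $\tfrac1n E[\log Z_{1,\lfloor ns\rfloor}(0,nt)]\to\rho(s,t)$ by upgrading the a.s.\ convergence in Lemma \ref{lem:fe} to $L^1$ (e.g.\ via a concentration bound for $\log Z$, or uniform integrability from the crude two-sided bounds sandwiching $\log Z$ between the logarithm of the relevant Weyl-chamber volume and suprema of finitely many Brownian motions).

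The heart of the proof is the regime $\xi>0$. For the upper bound, fix $\mu>0$ and compare $Z_{1,\lfloor ns\rfloor}(0,nt)$ with the stationary polymer carrying a boundary parameter tied to $\mu$: by the Burke property the logarithm of its partition function over $\lfloor ns\rfloor$ spatial steps and time $nt$ is an independent sum assembled from $\lfloor ns\rfloor$ reciprocal-Gamma-type factors and a drifted Brownian increment on $[0,nt]$, so its $\xi$-th moment is explicit and produces the exponent $t\bigl(\tfrac{\xi^2}{2}+\xi\mu\bigr)-s\log\tfrac{\Gamma(\mu+\xi)}{\Gamma(\mu)}$; pushing the domination through the boundary --- which costs only a subexponential factor once the $\xi$-moment is taken --- and then minimizing over $\mu>0$ gives $\Lambda_{s,t}(\xi)\le \min_{\mu>0}\bigl\{t(\tfrac{\xi^2}{2}+\xi\mu)-s\log\tfrac{\Gamma(\mu+\xi)}{\Gamma(\mu)}\bigr\}$. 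For the matching lower bound I would use restricted partition functions as in \cite{GS13}: restrict the path integral to a tube about the macroscopic curve and the environment to an event on which the within-tube partition function is comparable to the stationary polymer with parameter $\mu$, so that $\tfrac1n\log E[(\cdot)^\xi]$ is bounded below by $g(\xi,\mu):=t(\tfrac{\xi^2}{2}+\xi\mu)-s\log\tfrac{\Gamma(\mu+\xi)}{\Gamma(\mu)}$ up to an entropy correction that vanishes as the tube is refined, and then optimize over the refinement and over $\mu$. Along the way one records that $g(\xi,\mu)\to+\infty$ as $\mu\downarrow 0$ and as $\mu\to\infty$ and that $g(\xi,\cdot)$ is strictly convex, so the minimum is attained at an interior point and the minimization over $\mu>0$ is genuine. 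I expect this stationary-polymer step --- obtaining the Burke identity in a form usable in both bounds and controlling the boundary and entropy errors tightly enough that the two bounds meet --- to be the main obstacle.

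For $\xi\le 0$ the target is $\Lambda_{s,t}(\xi)=\xi\rho(s,t)$; the lower bound is the Jensen estimate above, so it remains to prove $\Lambda_{s,t}(\xi)\le\xi\rho(s,t)$, i.e.\ with $\alpha=-\xi\ge0$ that $E\bigl[Z_{1,\lfloor ns\rfloor}(0,nt)^{-\alpha}\bigr]\le e^{n(-\alpha\rho(s,t)+o(1))}$. I would split on the event $\bigl\{\tfrac1n\log Z_{1,\lfloor ns\rfloor}(0,nt)\ge\rho(s,t)-\delta\bigr\}$, on which $Z^{-\alpha}\le e^{-\alpha n(\rho-\delta)}$, and control the complement by a lower-tail large deviation estimate for $\tfrac1n\log Z$ that excludes deviations below $\rho(s,t)$ sharply enough, together with a crude negative-moment bound for $Z$. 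This lower-tail input is a second delicate point: it reflects that the genuine lower deviations of this model occur at a scale larger than $n$, so only an upper bound on the probability is needed rather than the precise rate (which is not known), and I would extract it from the large deviations of GUE eigenvalues through the distributional equivalence with Brownian last-passage percolation recorded in subsection \ref{GUEsub}, as the introduction anticipates.

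Finally, differentiability. The limit $\Lambda_{s,t}$ is convex, being a pointwise limit of the convex functions $\xi\mapsto\tfrac1n\log E\bigl[Z_{1,\lfloor ns\rfloor}(0,nt)^\xi\bigr]$, so it is differentiable off a countable set and it suffices to match one-sided derivatives at $0$. On $(-\infty,0)$ it equals the line $\xi\rho(s,t)$, hence is smooth with derivative $\rho(s,t)$. On $(0,\infty)$, $g(\xi,\mu)$ is jointly smooth and strictly convex in $\mu$, and its minimizer $\mu^*(\xi)$ solves $\Psi_0(\mu+\xi)-\Psi_0(\mu)=\tfrac{t\xi}{s}$ and depends smoothly on $\xi$ by the implicit function theorem, so $\Lambda_{s,t}(\xi)=g(\xi,\mu^*(\xi))$ is smooth there with $\Lambda_{s,t}'(\xi)=\partial_\xi g(\xi,\mu^*(\xi))=t(\xi+\mu^*(\xi))-s\Psi_0(\mu^*(\xi)+\xi)$ by the envelope theorem. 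Sending $\xi\downarrow0$ in the equation for $\mu^*$ gives $\mu^*(\xi)\to\Psi_1^{-1}(t/s)$, whence $\Lambda_{s,t}'(0^+)=t\,\Psi_1^{-1}(t/s)-s\Psi_0\bigl(\Psi_1^{-1}(t/s)\bigr)=\rho(s,t)$ by the closed form in Lemma \ref{lem:fe}, which matches $\Lambda_{s,t}'(0^-)$. Hence $\Lambda_{s,t}$ is differentiable on all of $\mathbb{R}$.
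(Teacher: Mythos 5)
Your treatment of $\xi\le 0$ (Jensen plus a splitting on the event $\{\log Z\ge n(\rho-\delta)\}$) and your differentiability argument (envelope theorem at the interior minimizer $\mu(\xi)$, matching one-sided derivatives at $0$ via $\Psi_1^{-1}(t/s)$) are essentially the paper's. But the core mechanism you propose for $\xi>0$ has a genuine gap. Your upper bound rests on the claim that, by the Burke property, the stationary log-partition function is an independent sum of $\lfloor ns\rfloor$ log-inverse-Gamma factors and a drifted Brownian increment, ``so its $\xi$-th moment is explicit.'' That is false: in the decomposition $\log Z^\theta_{\lfloor ns\rfloor}(nt)=\theta nt-B(nt)+\sum_{k\le\lfloor ns\rfloor}r_k^\theta(nt)$ the variables $r_k^\theta(nt)$ are indeed i.i.d.\ with $e^{-r}\sim\Gamma(\theta,1)$, but $B(nt)$ is \emph{not} independent of their sum --- the paper flags exactly this point just before \hyperref[LDcoupling]{(\ref*{LDcoupling})}, and that dependence is the reason it rewrites the coupling in the form \hyperref[LDcoupling]{(\ref*{LDcoupling})}. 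Moreover, even if the factorization held, it would give the exponent $t(\theta\xi+\tfrac{\xi^2}{2})+s\log\tfrac{\Gamma(\theta-\xi)}{\Gamma(\theta)}$, which exceeds the correct value $t(\theta\xi-\tfrac{\xi^2}{2})+s\log\tfrac{\Gamma(\theta-\xi)}{\Gamma(\theta)}$ (the $\mu=\theta-\xi$ form in the theorem) by exactly $t\xi^2$; so the domination step cannot produce the claimed exponent and your tube lower bound could never meet it. The sharp constant is not reachable by comparing moments directly: the paper first builds the right-tail rate function $J_{s,t}$ by near-subadditivity (\hyperref[thm:exist]{Theorem \ref*{thm:exist}}), extracts from \hyperref[LDcoupling]{(\ref*{LDcoupling})} by coarse graining the variational identity \hyperref[Uvarprob]{(\ref*{Uvarprob})} (Lemmas \ref{lem:varprob1}--\ref{lem:varprob2}), passes to Legendre transforms (\hyperref[lem:lfvar]{Lemma \ref*{lem:lfvar}}), and then must actually \emph{solve} the variational problem (\hyperref[prop:varsol]{Proposition \ref*{prop:varsol}}, \hyperref[cor:lfform]{Corollary \ref*{cor:lfform}}) before transferring back to moments through the Varadhan-type \hyperref[lem:MLEpos]{Lemma \ref*{lem:MLEpos}}. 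None of these steps appears in your outline, because the incorrect explicit-moment shortcut hides them; note also that even with the exact stationary Lyapunov exponent in hand, showing that its minimum over $\theta$ equals $\Lambda_{s,t}(\xi)$ (i.e.\ that boundary terms do not dominate at the optimal $\theta$) requires an argument of the type in \hyperref[prop:varsol]{Proposition \ref*{prop:varsol}}.

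A second, smaller gap concerns the input you need for $\xi\le 0$: the super-exponential decay of $P\left(\log Z_{1,\lfloor ns\rfloor}(0,nt)\le n(\rho(s,t)-\epsilon)\right)$ (\hyperref[prop:lowerbd]{Proposition \ref*{prop:lowerbd}}) does not follow ``from the GUE equivalence'' as you suggest. The Baryshnikov/Gravner--Tracy--Widom identity bounds $\log Z$ from \emph{above} by an entropy term plus Brownian last-passage percolation, so GUE tail estimates control the upper tail of $\log Z$, not its lower tail; a naive Jensen lower bound on the whole box only yields Gaussian-scale $e^{-cn}$ decay, which is not enough. The paper's proof requires a genuine construction: blocks of mesoscopic size, $\lfloor\sqrt n\rfloor$ disjoint families of coarse-grained paths producing independent i.i.d.\ block sums (hence decay of order $e^{-cn^{3/2}}$), with GUE estimates used only for the correction terms, plus a separate approximation step to pass from rational to general $s$. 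As written, your plan for both the $\xi>0$ exponent and this lower-tail estimate would not close.
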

\begin{theorem}\label{thm:LDP}
Fix $s,t \in (0,\infty)$. The distributions of $n^{-1}\log Z_{1,\lfloor ns \rfloor}(0,nt)$ satisfy a large deviation principle with normalization $n$ and convex good rate function
\begin{align*}
I_{s,t} (x) &= \begin{cases}
\infty & x < \rho(s,t) \\
\Lambda_{s,t}^*(x) & x \geq \rho(s,t)
\end{cases}.
\end{align*}
\end{theorem}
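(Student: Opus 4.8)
\textbf{Proof proposal for Theorem \ref{thm:LDP}.}
The plan is to deduce the large deviation principle directly from Theorem \ref{thm:MLE} via the G\"artner-Ellis theorem, and then to read off the effective domain of the rate function from the explicit behaviour of $\Lambda_{s,t}$ on $(-\infty,0]$.

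First I would collect the regularity of $\Lambda_{s,t}$ needed to apply G\"artner-Ellis. For each $n$ the map $\xi \mapsto \frac{1}{n}\log E\left[e^{\xi \log Z_{1,\lfloor ns\rfloor}(0,nt)}\right]$ is convex by H\"older's inequality, so the pointwise limit $\Lambda_{s,t}$ is convex; by Theorem \ref{thm:MLE} it is finite and differentiable on all of $\mathbb{R}$, hence also lower semicontinuous. Consequently its effective domain is $\mathbb{R}$, so the origin lies in its interior and $\Lambda_{s,t}$ is essentially smooth: differentiability on the interior of the effective domain is exactly the content of Theorem \ref{thm:MLE}, while steepness holds vacuously because the effective domain has empty boundary. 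The G\"artner-Ellis theorem then gives that the distributions of $n^{-1}\log Z_{1,\lfloor ns\rfloor}(0,nt)$ satisfy a large deviation principle with normalization $n$ and good rate function $\Lambda_{s,t}^*$, which is automatically convex and lower semicontinuous as a Legendre-Fenchel transform.

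It remains only to identify $\Lambda_{s,t}^*$ with the piecewise expression in the statement, i.e. to check that $\Lambda_{s,t}^*(x)=\infty$ when $x<\rho(s,t)$. Since $\Lambda_{s,t}(\xi)=\xi\rho(s,t)$ for $\xi\le 0$ by Theorem \ref{thm:MLE}, we have
\[
\Lambda_{s,t}^*(x)\ \ge\ \sup_{\xi\le 0}\bigl\{\xi x-\xi\rho(s,t)\bigr\}\ =\ \sup_{\xi\le 0}\ \xi\,(x-\rho(s,t)),
\]
and the right side is $+\infty$ whenever $x-\rho(s,t)<0$; for $x\ge \rho(s,t)$ the rate function is $\Lambda_{s,t}^*(x)$ by definition. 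As a consistency check, differentiability of $\Lambda_{s,t}$ at $0$ forces $\Lambda_{s,t}'(0)=\rho(s,t)$, so $\Lambda_{s,t}^*(\rho(s,t))=0$ and the rate function vanishes precisely at the almost sure limit from Lemma \ref{lem:fe}, as it must.

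The substantive analytic work---namely the existence and differentiability of $\Lambda_{s,t}$---is carried out in Theorem \ref{thm:MLE}, so there is no real obstacle in this argument; the only point demanding attention is the verification of the hypotheses of the G\"artner-Ellis theorem, and in particular the observation that essential smoothness here reduces to the differentiability provided by Theorem \ref{thm:MLE} together with the trivial steepness condition coming from the fact that $\Lambda_{s,t}$ is finite on all of $\mathbb{R}$.
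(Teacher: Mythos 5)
Your proposal is correct and follows exactly the paper's route: the paper's proof of Theorem \ref{thm:LDP} is a one-line appeal to the G\"artner-Ellis theorem together with Theorem \ref{thm:MLE}. Your additional verifications (essential smoothness via finiteness and differentiability of $\Lambda_{s,t}$ on all of $\mathbb{R}$, and the identification $\Lambda_{s,t}^*(x)=\infty$ for $x<\rho(s,t)$ from the linear branch $\Lambda_{s,t}(\xi)=\xi\rho(s,t)$ for $\xi\le 0$) simply spell out the details the paper leaves implicit, and they are accurate.
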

\begin{remark} The function being minimized for $\xi > 0$ in \hyperref[thm:MLE]{Theorem \ref*{thm:MLE}} is convex and coercive, so the minimum is attained. The details this fact are worked out in the proof of \hyperref[cor:lfform]{Corollary \ref*{cor:lfform}}.
\end{remark}
\subsection{Definition of the stationary model and proof outline} \label{subsecstat}
Let $B(t)$ be a two-sided Brownian motion independent of the family $\{B_i\}_{i=0}^\infty$ and for $\theta>0$, $t \in \mathbb{R}$ and $n \in \mathbb{Z}_+$ define point-to-point partition functions by
\begin{align*}
Z_n^\theta(t) &= \int\displaylimits_{- \infty < u_0 < u_1 < \dots < u_{n-1} < t}e^{ \theta u_0 - B(u_0) + B_1(u_0, u_1) + \dots + B_{n}(u_{n-1},t)} du_0 \dots du_{n-1},
\end{align*}
with the convention that
\begin{align*}
Z_0^\theta (t) &= e^{\theta t - B(t)}.
\end{align*}
We can think of $Z_n^\theta(t)$ as a modification of the polymer in the previous subsection where we add a spatial dimension and start in the infinite past.  For $s,t > 0$ and $n$ sufficiently large that $ns\geq1$, we obtain a decomposition of $Z_{\lfloor ns \rfloor}^\theta(nt)$ into terms that involve the partition functions we are studying by considering where paths leave the potential of the Brownan motion $B$:
\begin{align}
Z_{\lfloor n s \rfloor}^\theta(nt)&= \int_0^{nt} Z_0^\theta(u)Z_{1,\lfloor n s \rfloor}(u,nt)du + \sum_{j=1}^{\lfloor ns \rfloor} Z_j^\theta(0)Z_{j, \lfloor n s \rfloor}(0,nt). \label{coupling}
\end{align}
This expression also leads to the interpretation of $Z_n^\theta(t)$ as a modification of the point-to-point partition function discussed in the previous subsection where we have added boundary conditions.

We will refer to this model as the stationary polymer, where the term stationary comes from the fact that it satisfies an analogue of Burke's theorem for M/M/1 queues. This fact is one of the main contributions of \cite{OY01} and we refer the reader to that paper for a more in depth discussion of the connections to queueing theory. We follow the notation of $\cite{SV10}$, which contains the version of the Burke property that will be used in this paper. Define $Y_0^\theta(t) = B(t)$ and for $k \geq 1$ recursively set
\begin{align}
r_k^\theta(t) &= \log \int_{-\infty}^t e^{Y_{k-1}^\theta(u,t) - \theta(t-u) + B_k(u,t)} du, \notag \\
Y_k^\theta(t) &= Y_{k-1}^\theta(t) + r_k^\theta(0) - r_k^\theta(t), \label{queuedef} \\ 
X_k^\theta(t) &= B_k(t) + r_k^\theta(0) - r_k^\theta(t); \notag
\end{align}
then we have
\begin{lemma} \cite[Theorem 3.3]{SV10} Let $n \in \mathbb{N}$ and $0 \leq s_n \leq s_{n-1} \leq \dots \leq s_1 < \infty$. Then over $j$, the following random variables and processes are all mutually independent.
\begin{align*}
& r_j(s_j) \ \text{ and } \    \{X_j(s): s\leq s_j\} \ \text{ for } \    1\leq j\leq n, \quad  \{Y_n(s): s\leq s_n\}, \quad \\
& \qquad  \text{and} \quad
\{Y_j(s_{j+1},s): s_{j+1}\leq s\le s_j\} \ \text{ for }   1\leq j\leq n-1.
 \end{align*}
Furthermore,  the $X_j$ and $Y_j$  processes are standard Brownian motions,  and $e^{-r_j(s_j)}$ is $\Gamma(\theta,1)$ distributed.
\end{lemma}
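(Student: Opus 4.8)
The plan is to prove the statement by induction on the number of queues $n$, reducing everything to a single-station Burke identity; this is in essence the argument of \cite{SV10}, so in the body of the paper one may simply quote that theorem.

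\emph{Base case ($n=1$).} Here the input is $Y_0^\theta = B$ and the service is $B_1$, two independent two-sided standard Brownian motions, and one must show: (i) $Y_1^\theta$ and $X_1^\theta$ are two-sided standard Brownian motions; (ii) $e^{-r_1^\theta(s)}\sim\Gamma(\theta,1)$ for every $s$; and (iii) for fixed $s\ge 0$ the three objects $r_1^\theta(s)$, $\{X_1^\theta(u):u\le s\}$ and $\{Y_1^\theta(u):u\le s\}$ are mutually independent. This single-station identity is the heart of the matter. First I would check that the integral in \eqref{queuedef} converges almost surely and that $t\mapsto e^{-r_1^\theta(t)}$ is a stationary diffusion run from its invariant law; substituting $v=t-u$ writes $e^{r_1^\theta(t)}$ as an exponential functional $\int_0^\infty e^{W(t)-W(t-v)-\theta v}\,dv$ of a Brownian motion $W$, and Dufresne's identity identifies the invariant law as that of the reciprocal of a $\Gamma(\theta,1)$ variable, giving (ii). For (i) and (iii) I would run a time-reversal argument: reversing time carries the stationary Brownian queue to another queue of the same type with the roles of input and output interchanged, so that $Y_1^\theta$, being the reversal of an input, is again a two-sided Brownian motion (and similarly $X_1^\theta$), while reversibility together with the Markov property forces $\{Y_1^\theta(u):u\le s\}$ to be independent of the queue length at time $s$. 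Alternatively (i)--(iii) can be recovered as the continuum limit of Burke's theorem for the M/M/1 queue, or from the geometric Pitman theorem of Matsumoto and Yor \cite{MY01}.

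\emph{Induction step.} Assume the statement for $n-1$ queues. Apply the base case to station $1$: $Y_1^\theta$ is a two-sided Brownian motion, it is independent of $B_2,\dots,B_n$ (being a function of $B$ and $B_1$ alone), and $(r_1^\theta(s_1),\{X_1^\theta(u):u\le s_1\})$ is independent of $\{Y_1^\theta(u):u\le s_1\}$. Now view stations $2,\dots,n$ as an $(n-1)$-queue system with input $Y_1^\theta$, services $B_2,\dots,B_n$ and thresholds $s_2\ge\dots\ge s_n$; the induction hypothesis gives mutual independence of the objects $r_j^\theta(s_j)$, $\{X_j^\theta(u):u\le s_j\}$, $\{Y_n^\theta(u):u\le s_n\}$ and $\{Y_j^\theta(s_{j+1},u):s_{j+1}\le u\le s_j\}$ for the indices $2\le j\le n$. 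It remains to splice in $r_1^\theta(s_1)$, $\{X_1^\theta(u):u\le s_1\}$ and $\{Y_1^\theta(s_2,u):s_2\le u\le s_1\}$. Since in \eqref{queuedef} the processes $r_k^\theta,X_k^\theta,Y_k^\theta$ restricted to a window $(-\infty,a]$ depend on the input only through its restriction to $(-\infty,a]$, and the thresholds are nonincreasing, every object from the $(n-1)$-queue system is measurable with respect to $\sigma\big(Y_1^\theta|_{(-\infty,s_2]},B_2,\dots,B_n\big)$. The base case gives that $(r_1^\theta(s_1),\{X_1^\theta(u):u\le s_1\})$ is independent of $\sigma\big(Y_1^\theta|_{(-\infty,s_1]},B_2,\dots,B_n\big)$, which contains everything else, and the increments $\{Y_1^\theta(s_2,u):s_2\le u\le s_1\}$ of the Brownian motion $Y_1^\theta$ are independent of $\sigma\big(Y_1^\theta|_{(-\infty,s_2]},B_2,\dots,B_n\big)$; combining these two facts with the induction hypothesis yields the full mutual independence, and the distributional assertions are then immediate.

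The main obstacle is the single-station identity, and within it the independence in (iii): the Brownian and Gamma marginals are classical, but the independence genuinely needs reversibility of the stationary queue (or an equivalent exact computation), and one must be careful with the two-sided, infinite-past set-up -- in particular with the almost-sure finiteness of $r_1^\theta$ and with the fact that the output is ``Burke-independent'' of the queue length only up to the fixed time $s$, not on the whole line.
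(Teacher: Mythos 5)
The paper itself offers no proof of this lemma --- it is quoted directly from \cite[Theorem 3.3]{SV10} --- and your outline reconstructs essentially the argument of that source: induction on the number of stations, with the single-station Burke property (Gamma marginal via the exponential functional and Dufresne's identity, Brownian outputs and independence via reversibility of the stationary queue) as the base case, and the splicing step done through measurability of the downstream $(n-1)$-station system with respect to $\sigma\bigl(Y_1^\theta|_{(-\infty,s_2]},B_2,\dots,B_n\bigr)$ together with independence of Brownian increments. Your induction step is correct and complete as written; the only part left as a sketch is the mutual independence of the full triple $\bigl(r_1^\theta(s),\{X_1^\theta(u):u\le s\},\{Y_1^\theta(u):u\le s\}\bigr)$ in the base case, which is exactly the content of the cited single-queue theorem.
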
 

An induction argument shows that
\begin{align}
\sum_{k=1}^n r_k^\theta(t) &= B(t) - \theta t + \log Z_n^\theta (t). \label{statdecomp}
\end{align}

As we will see shortly, expression \hyperref[coupling]{(\ref*{coupling})} would lead to a variational formula for the right tail rate function we are looking for in terms of the right tail rate function of $Z_{\lfloor ns \rfloor}^\theta(nt)$. This right tail rate function would be tractable using \hyperref[statdecomp]{(\ref*{statdecomp})} if $B(nt)$ were independent of $\sum_{k=1}^{\lfloor ns \rfloor} r_k^\theta(nt)$; as this is not the case, it is convenient to rewrite \hyperref[coupling]{(\ref*{coupling})} in a form that separates these two terms:
\begin{align}
e^{\sum_{k=1}^{\lfloor ns \rfloor} r_k^\theta(nt)} &= n \int_0^t \frac{Z_0^\theta(nu)}{Z_0^\theta(nt)}Z_{1, \lfloor ns \rfloor}(nu,nt) du + \sum_{j=1}^{\lfloor ns \rfloor} \frac{Z_j^\theta(0)}{Z_0^\theta(nt)} Z_{j,\lfloor ns \rfloor}(0,nt) \label{LDcoupling}.
\end{align}

We now briefly outline the proof of \hyperref[thm:MLE]{Theorem \ref*{thm:MLE}}. In order to compute the positive moment Lyapunov exponents, we consider the dual problem of establishing right tail large deviations for the free energy. As is typically the case, existence and regularity of the right tail rate function follows from subadditivity arguments. It then follows from \hyperref[LDcoupling]{(\ref*{LDcoupling})} that this right tail rate function solves a variational problem in terms of computable rate functions coming from the stationary model. Taking Legendre-Fenchel transforms brings us back to the study of moment Lyapunov exponents and gives the variational problem a linear structure which makes it tractable.

For non-positive exponents, we use crude estimates on the partition function to identify the limit. We are able to do this because the left tail large deviations for the free energy are are strictly subexponential while the moment Lyapunov exponents are only sensitive to exponential scale large deviation.

\section{A variational problem for the right tail rate function}
\subsection{Definitions and notation}
The goal of this subsection is to introduce the right tail rate function for the free energy, which we will denote $J_{s,t}(x)$, and the rate functions coming from the stationary model which appear in the variational expression for $J_{s,t}(x)$. We will defer some of the proofs of technical results about the existence and regularity of these rate functions to \hyperref[sec:existreg]{Appendix \ref*{sec:existreg}}. We begin by defining these functions and addressing existence.
\begin{theorem}\label{thm:exist}
For all $s\geq 0$, $t > 0$ and $r \in \mathbb{R}$, the limit
\begin{align*}
J_{s,t}(r) &= \lim_{n \to \infty} - \frac{1}{n} \log P \left(\log Z_{1, \lfloor ns \rfloor}(0, nt) \geq nr \right)
\end{align*}
exists and is $\mathbb{R}_+$ valued.  Moreover, $J_{s,t}(r)$ is continuous, convex, subadditive, and positively homogeneous of degree one as a function of $(s, t,r) \in [0,\infty) \times(0,\infty) \times\mathbb{R}$. For fixed $s$ and $t$, $J_{s,t}(r)$ is increasing in $r$ and $J_{s,t}(r) = 0$ if $r \leq \rho(s,t)$.
\end{theorem}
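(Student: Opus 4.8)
The plan is to establish the existence of the limit via a subadditivity argument exploiting the supermultiplicativity of the partition function, and then to read off the regularity properties from the structure of the limit. First, fix a direction and consider the tilted event. The key observation is that, because of the supermultiplicativity \hyperref[gensubmult]{(\ref*{gensubmult})} and the shift invariance of the environment, the quantities $-\log P(\log Z_{1,\lfloor ns\rfloor}(0,nt) \geq nr)$ are (approximately) superadditive along $n$ when we simultaneously scale $s$, $t$, and $r$. More precisely, I would work with the ``prelimit'' functions $a_n(s,t,r) = -\log P(\log Z_{1,\lfloor ns\rfloor}(0,nt)\geq nr)$ and show that for $m,n$ and matching parameters, $a_{m+n}(\cdots) \geq a_m(\cdots) + a_n(\cdots) - o(m+n)$ using the decomposition $Z_{0,n+m}(0,t+u)\geq Z_{0,n}(0,t)Z_{n,n+m}(t,t+u)$ together with independence of the two factors on the right. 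The standard Fekete-type argument (in its version for functions of several homogeneous parameters, as in \cite{Sep98} and \cite{GS13}) then yields existence of the limit $J_{s,t}(r)$ and simultaneously its positive homogeneity of degree one in $(s,t,r)$.

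Second, convexity and continuity: convexity of $J_{s,t}(r)$ jointly in $(s,t,r)$ follows from the superadditivity combined with the homogeneity — a superadditive, positively homogeneous function on a convex cone is concave, but here the sign works out because we are looking at $-\log P$ of an increasing event, so I would instead argue that $r\mapsto P(\cdots\geq nr)$ is log-concave-like after the appropriate manipulation, or more cleanly, derive convexity directly from Hölder's inequality applied to the probabilities of half-line events across convex combinations of parameters. Continuity on the interior of the parameter cone is then automatic from convexity and finiteness; continuity up to the boundary $s = 0$ requires a separate small argument comparing $Z_{1,\lfloor ns\rfloor}$ for small $s$ to the single-Brownian-motion term $Z_{1,1}(0,nt) = e^{B_1(0,nt)}$, whose large deviations are explicit (Gaussian), giving the needed bound as $s\downarrow 0$.

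Third, the qualitative properties for fixed $(s,t)$: monotonicity in $r$ is immediate since $\{\log Z\geq nr\}$ is decreasing in $r$, so $P(\cdots)$ is nonincreasing and $-\frac1n\log P$ is nondecreasing, passing to the limit. The fact that $J_{s,t}(r) = 0$ for $r\leq \rho(s,t)$ follows from Lemma \ref{lem:fe}: since $\frac1n\log Z_{1,\lfloor ns\rfloor}(0,nt)\to\rho(s,t)$ almost surely, for $r<\rho(s,t)$ the probability $P(\log Z_{1,\lfloor ns\rfloor}(0,nt)\geq nr)\to 1$, so its $-\frac1n\log$ tends to $0$; the case $r = \rho(s,t)$ then follows by monotonicity and the fact that $J_{s,t}$ is nonnegative (which is clear since it is a limit of nonnegative quantities). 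Nonnegativity and the $\mathbb{R}_+$-valued claim are thus built in. The subadditivity claim (as opposed to superadditivity of the prelimit) I would deduce from the homogeneity and convexity: a convex, positively homogeneous degree-one function is automatically subadditive.

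The main obstacle I anticipate is the subadditivity/superadditivity bookkeeping at the level of the prelimit functions $a_n$: one has to handle the floor functions $\lfloor ns\rfloor$ carefully, control the error terms coming from the boundary of the Weyl chamber constraint $u_{n-1}<t<u_n$ in \hyperref[submult]{(\ref*{submult})} (which costs only a polynomial factor, hence $o(n)$ after taking logs), and verify that the independence of $Z_{0,n}(0,t)$ and $Z_{n,n+m}(t,t+u)$ genuinely decouples the two half-line probabilities — this is where the i.i.d. structure of the environment is used. Everything else is a fairly routine, if somewhat lengthy, application of the standard machinery, which is why I expect the detailed proof to be deferred to the appendix as the excerpt indicates.
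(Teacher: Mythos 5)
Your overall skeleton (Fekete-type argument from supermultiplicativity of $Z$ plus independence of the environment, homogeneity from the joint scaling of $(s,t,r)$, convex-analytic continuity, the GUE-free-energy law of large numbers for $r\le\rho(s,t)$) is the same as the paper's, but the crux of your argument has the inequality pointing the wrong way. Supermultiplicativity (\ref{gensubmult}) together with independence gives
$P\left(\log Z_{0,n+m}(0,t+u)\ge z_1+z_2\right)\ge P\left(\log Z_{0,n}(0,t)\ge z_1\right)P\left(\log Z_{n,n+m}(t,t+u)\ge z_2\right)$,
so the prelimit quantities $a_n=-\log P(\cdot)$ are (approximately) \emph{subadditive}, not superadditive; your displayed claim $a_{m+n}\ge a_m+a_n-o(m+n)$ is false as stated. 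The correct logical chain, which is what the paper runs, is: subadditive-with-bounded-correction prelimit function $T(x,y,z)$, a multivariate Fekete theorem giving existence of the limit along rays together with positive homogeneity and subadditivity of the limit, and then convexity as a consequence of subadditivity plus degree-one homogeneity. Your proposed repairs go in the opposite direction (convexity first via H\"older, then subadditivity from convexity and homogeneity); H\"older naturally yields convexity of moment Lyapunov exponents in the tilt variable, not joint convexity of the right-tail rate in $(s,t,r)$, so this patch is not a workable substitute. Note also that the Fekete step requires $T$ to be finite and bounded on compacts, i.e.\ a uniform positive lower bound on the probabilities (Lemma \ref{lem:existlb}); this is absent from your outline.

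Two further points are genuinely needed and not covered by "handle the floors carefully." First, when $\lfloor x_1 s\rfloor+\lfloor x_2 s\rfloor$ falls short of $\lfloor (x_1+x_2)s\rfloor$ you must insert an extra spatial line with no extra time, and $Z_{j,j+1}(t,t)$ degenerates (the restriction $u_{n-1}<t<u_n$ in (\ref{submult}) costs nothing; the problem is the missing time, not a polynomial volume factor). The paper resolves this by proving the statement first for the time-tilted quantity $Z_{0,\lfloor xs\rfloor}(0,xt-1)$ (Theorem \ref{thm:existreg}), using the reserved unit of time to pay a fixed constant $-\log P(\log Z_{0,1}(0,1)\ge 0)$ for the extra line, and then transferring to general sequences $s_n/n\to s$, $t_n/n\to t$ via Lemma \ref{lem:sequences}; your plan needs some such device. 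Second, for continuity at the boundary $s=0$ the delicate half is lower semicontinuity, $\liminf J_{s_k,t_k}(r_k)\ge r^2/(2t)$, which requires an \emph{upper} bound on the partition function uniformly as $s_k\downarrow 0$; comparing with the single Brownian term only bounds $Z$ from below. The paper gets the needed bound from the Weyl-chamber volume plus the GUE largest-eigenvalue right-tail estimate (Lemma \ref{lem:JGUE}), and your sketch should be replaced by, or fleshed out into, an argument of that type. Finally, your monotonicity argument for $J_{s,t}(\rho(s,t))=0$ should invoke continuity in $r$ (monotonicity and nonnegativity alone do not close the endpoint case).
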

\noindent The proof of  \hyperref[thm:exist]{Theorem \ref*{thm:exist}} can be found in \hyperref[subsec:existreg]{subsection \ref*{subsec:existreg}} of \hyperref[sec:existreg]{Appendix \ref*{sec:existreg}}. 

Next, we define the computable rate functions from the stationary model.  By the Burke property for the stationary model, the first limit below can be computed as the right branch of a Cram\'er rate function. For $s,t > 0$, we set
\begin{align*}
U_s^{\theta}(x) &= - \lim \frac{1}{n} \log P \left(\sum_{k=1}^{\lfloor ns \rfloor} r_k^\theta (0)\geq nx \right) \\
&= \begin{cases}
0 & x \leq - s \Psi_0(\theta) \\
x(\theta - \Psi_0^{-1}(-\frac{x}{s})) + s \log\frac{\Gamma(\theta)}{\Gamma(\Psi_0^{-1}(-\frac{x}{s}))} & x > - s\Psi_0(\theta)
\end{cases},\\
R_t^\theta(x) &= - \lim \frac{1}{n} \log P \left( B(nt) - \theta n t \geq nx \right) = \begin{cases}
0 & x \leq -\theta t \\
\frac{1}{2}\left(\frac{x+ \theta t}{\sqrt{t}}\right)^2 & x > -\theta t 
\end{cases}.
\end{align*}
We may continuously extend $U_s^\theta(x)$ to $s = 0$ by setting
\begin{align*}
U_0^\theta(x) &= \begin{cases}
0 & x \leq 0 \\
x \theta & x > 0
\end{cases}.
\end{align*}
We record the Legendre-Fenchel transforms of these functions below:
\begin{align*}
(U_s^\theta)^*(\xi) &= \begin{cases}
\infty & \xi < 0 \text{ or } \xi \geq \theta \\
s \log \frac{\Gamma(\theta - \xi)}{\Gamma(\theta)} & 0 \leq \xi < \theta
\end{cases}, \qquad 
(R_t^\theta)^*(\xi) = \begin{cases}
\infty & \xi < 0 \\
t(\frac{\xi^2}{2} - \theta \xi) & \xi \geq 0
\end{cases}.
\end{align*}

The next lemma implies existence of the rate functions which will appear when we use equation \hyperref[LDcoupling]{(\ref*{LDcoupling})} to prove that $J_{s,t}(x)$ satisfies a variational problem in the next subsection. Versions of this result appear in several other papers, so we elect not to re-prove it. The exact statement we need appears in  \cite{GS13}.
\begin{lemma}\label{lem:indep}
\cite[Lemma 3.6]{GS13}) Suppose that for each $n$, $X_n$ and $Y_n$ are independent, that the limits
\begin{align*}
\lambda(s) &= \lim_{n \to \infty} - \frac{1}{n} \log P \left( X_n \geq  ns  \right), \qquad \phi(s) = \lim_{n \to \infty} - \frac{1}{n} \log P \left( Y_n \geq n s \right)
\end{align*}
exist, and that $\lambda$ is continuous. If there exists $a_\lambda$ and $a_\phi$ so that $\lambda(a_\lambda) = \phi(a_\phi) = 0$, then 
\begin{align*}
\lim_{n \to \infty} - \frac{1}{n} \log P\left(X_n + Y_n \geq nr\right) &= \begin{cases}
 \inf_{a_\lambda \leq s \leq r - a_\phi}\{\phi(r-s) + \lambda(s)\} & r\geq a_\phi + a_\lambda \\
0 & r \leq a_\phi + a_\lambda
\end{cases} \\
 &= \lambda \square \phi(r)
\end{align*}
\end{lemma}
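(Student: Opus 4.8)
The plan is the standard two-bound argument showing that rate functions compose under infimal convolution when independent sequences are added, specialized here to upper tails. For the easy direction — an upper bound on the limiting rate, equivalently a lower bound on the probability — I would use, for each fixed $s$, the inclusion $\{X_n \geq ns\} \cap \{Y_n \geq n(r-s)\} \subseteq \{X_n + Y_n \geq nr\}$; independence then gives $P(X_n + Y_n \geq nr) \geq P(X_n \geq ns)\,P(Y_n \geq n(r-s))$, and taking $-n^{-1}\log$ and letting $n \to \infty$ (both limits exist by hypothesis) yields $\limsup_n \bigl(-n^{-1}\log P(X_n+Y_n \geq nr)\bigr) \leq \lambda(s) + \phi(r-s)$. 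Optimizing over $s$ produces the bound $\lambda \square \phi(r)$.

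The matching lower bound on the rate is the substantive part. I would first record that $\lambda$ and $\phi$ are nonnegative and nondecreasing (the events $\{X_n \geq ns\}$ and $\{Y_n \geq ns\}$ shrink in $s$), so in fact $\lambda \equiv 0$ on $(-\infty, a_\lambda]$ and $\phi \equiv 0$ on $(-\infty, a_\phi]$. Fixing $\epsilon > 0$ and $L$ with $r - L \leq a_\lambda$, I would then choose a finite grid $a_\phi = v_0 < \dots < v_N = L$ whose mesh is small enough that $|\lambda(r-v_{j+1}) - \lambda(r-v_j)| \leq \epsilon$ for all $j$ — possible by uniform continuity of $\lambda$ on the compact interval $[r-L, r-a_\phi]$ (where $\lambda = +\infty$, the corresponding terms below are trivially negligible) — and cover $\{X_n+Y_n \geq nr\}$ by the finitely many events $\{Y_n \geq nL\}$, $\{Y_n < na_\phi\} \cap \{X_n \geq n(r-a_\phi)\}$, and $\{X_n \geq n(r - v_{j+1})\} \cap \{Y_n \geq nv_j\}$ for $0 \leq j \leq N-1$. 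By independence and a union bound over finitely many pieces, the exponential decay rate of the union is at least the minimum of the piecewise rates $\phi(L)$, $\lambda(r-a_\phi)$, and $\lambda(r-v_{j+1}) + \phi(v_j)$; each of these is at least $\lambda \square \phi(r) - \epsilon$, using $\lambda(r-L) = 0$ and $\phi(a_\phi) = 0$ for the first two, and the mesh choice together with $\lambda \square \phi(r) \leq \lambda(r-v_j) + \phi(v_j)$ for the grid terms. Hence $\liminf_n\bigl(-n^{-1}\log P(X_n + Y_n \geq nr)\bigr) \geq \lambda \square \phi(r) - \epsilon$; sending $\epsilon \to 0$ makes $\limsup$ and $\liminf$ agree, so the limit exists and equals $\lambda \square \phi(r)$.

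For the explicit piecewise formula I would use monotonicity once more: for $s < a_\lambda$ one has $\lambda(s) = 0$ and $\phi(r-s) \geq \phi(r-a_\lambda)$, so $s = a_\lambda$ dominates, and symmetrically $s = r - a_\phi$ dominates every $s > r - a_\phi$; thus for $r \geq a_\lambda + a_\phi$ the infimum defining $\lambda \square \phi(r)$ may be taken over $s \in [a_\lambda, r - a_\phi]$, while for $r \leq a_\lambda + a_\phi$ the choice $s = a_\lambda$ gives $\lambda(a_\lambda) + \phi(r-a_\lambda) = 0$, which with nonnegativity forces $\lambda \square \phi(r) = 0$. The one step I expect to require care is controlling the discretization error in the lower bound, which is exactly where the hypothesized continuity of $\lambda$ is used; note that no separate exponential tightness hypothesis on $Y_n$ is needed, since the far tail piece $\{Y_n \geq nL\}$ is automatically controlled once $\lambda$ vanishes to the left of $r - L$.
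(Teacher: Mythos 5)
Your argument is correct, but note that the paper itself does not prove this lemma at all: it explicitly elects not to re-prove it and simply cites \cite[Lemma 3.6]{GS13}, so your write-up is a self-contained substitute rather than a variant of an in-paper proof. What you do is the standard two-sided argument, and it is the same in spirit as the proof in \cite{GS13}: the intersection bound $P(X_n\geq ns)P(Y_n\geq n(r-s))\leq P(X_n+Y_n\geq nr)$ for the easy direction, and for the other direction a finite covering of $\{X_n+Y_n\geq nr\}$ by the far-tail piece $\{Y_n\geq nL\}$, the piece $\{Y_n< na_\phi\}\cap\{X_n\geq n(r-a_\phi)\}$, and the grid pieces $\{X_n\geq n(r-v_{j+1})\}\cap\{Y_n\geq nv_j\}$, with the discretization error absorbed by uniform continuity of $\lambda$ alone (correctly, since only $\lambda$ is assumed continuous), monotonicity of both rate functions handling the zero set and the reduction of the infimum to $[a_\lambda, r-a_\phi]$. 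The only soft spot is your parenthetical treatment of points where $\lambda=+\infty$: in the one grid cell straddling the boundary of finiteness the mesh bound $|\lambda(r-v_{j+1})-\lambda(r-v_j)|\leq\epsilon$ is unavailable and ``trivially negligible'' needs a word (e.g.\ extended-sense continuity forces $\lambda(r-v_{j+1})$ to exceed any prescribed level as the mesh shrinks); this is immaterial for the paper, where the lemma is applied only with finite, real-valued rate functions ($R_t^\theta$, $U_s^\theta$, $J_{s,t}$), but worth a sentence if you keep the extended-valued generality.
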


We define rate functions corresponding to the two parts of the decomposition in \hyperref[LDcoupling]{(\ref*{LDcoupling})} as follows: for $a \in [0,t)$, $u\in (0,s]$, $v \in [0,s)$, and $x \in \mathbb{R}$ set
\begin{align}
G_{a,s,t}^\theta(x) &= - \lim \frac{1}{n} \log P \left( B(na,nt) - \theta n (t-a) + \log Z_{1,\lfloor n s \rfloor}(na,nt) \geq nx \right), \notag \\
H_{u, v, s, t}^\theta(x) &= - \lim \frac{1}{n} \log P \left(- \log Z_0^\theta (nt) + \log Z_{\lfloor n u\rfloor}^\theta(0) + \log Z_{\lfloor nv \rfloor, \lfloor n s \rfloor}(0,nt) \geq nx \right). \label{GHdef}
\end{align}

Recall that $\log Z_j^\theta(0) = \sum_{k=1}^j r_k^\theta(0)$ is measurable with respect to the sigma algebra $\sigma(B(s), B_k(s) : 1 \leq k \leq j;  s \leq 0)$ and that for $0 \leq u < nt$, $\log Z_{j,\lfloor ns \rfloor}(u,t)$ is measurable with respect to the sigma algebra $\sigma(B_k(s_k) :  j \leq k \leq \lfloor ns\rfloor, u \leq s_j \leq nt)$. Combining the independence of the environment with the computations above, \hyperref[thm:exist]{Theorem \ref*{thm:exist}} and \hyperref[lem:indep]{Lemma \ref*{lem:indep}} imply that $G_{a,s,t}^\theta(x)$ and $H_{u, v, s, t}^\theta(x)$ are well-defined. In particular, we immediately obtain
\begin{corollary}\label{cor:infcon}
For $a\in [0,t)$ and $u \in (0,s]$, and $v \in [0,s)$
\begin{align*}
G_{a,s,t}^\theta(x) &= R_{t - a}^\theta \square J_{s,t - a}(x) , \qquad 
H_{u,v,s,t}^\theta(x) = R_t^\theta \square U_u^\theta \square J_{s - v, t}(x).
\end{align*}
\end{corollary}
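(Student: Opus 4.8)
The plan is to deduce the corollary directly from Lemma~\ref{lem:indep}, once the events defining $G_{a,s,t}^\theta$ and $H_{u,v,s,t}^\theta$ have been written as sums of independent pieces whose individual right-tail rate functions are among $R_\bullet^\theta$, $U_\bullet^\theta$, and $J_{\bullet,\bullet}$.

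First I would identify the pieces using the distributional symmetries of the model. For $G$, translation invariance of Brownian motion gives $B(na,nt)-\theta n(t-a)\overset{d}{=}B(n(t-a))-\theta n(t-a)$, whose right-tail rate function at level $nx$ is $R_{t-a}^\theta(x)$, and time-shift invariance of the i.i.d.\ environment gives $\log Z_{1,\lfloor ns\rfloor}(na,nt)\overset{d}{=}\log Z_{1,\lfloor ns\rfloor}(0,n(t-a))$, whose rate function is $J_{s,t-a}(x)$ by Theorem~\ref{thm:exist} (here $t-a>0$). For $H$, we have $-\log Z_0^\theta(nt)=B(nt)-\theta nt$, with rate function $R_t^\theta(x)$; equation~\eqref{statdecomp} at time $0$ gives $\log Z_{\lfloor nu\rfloor}^\theta(0)=\sum_{k=1}^{\lfloor nu\rfloor}r_k^\theta(0)$, with rate function $U_u^\theta(x)$; and shift invariance in space gives $\log Z_{\lfloor nv\rfloor,\lfloor ns\rfloor}(0,nt)\overset{d}{=}\log Z_{1,\lfloor ns\rfloor-\lfloor nv\rfloor+1}(0,nt)$. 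Since $n^{-1}(\lfloor ns\rfloor-\lfloor nv\rfloor)\to s-v\geq0$ and $J$ is continuous in its space argument (Theorem~\ref{thm:exist}), the rate function of this last term is $J_{s-v,t}(x)$.

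Next I would check mutual independence using the measurability facts recorded just before the corollary. For $G$, $B(na,nt)$ is a function of the boundary Brownian motion $B$ while $\log Z_{1,\lfloor ns\rfloor}(na,nt)$ is a function of $\{B_i\}_{i\geq1}$, so the two are independent. For $H$, $B(nt)-\theta nt$ depends on $B$ restricted to $[0,\infty)$; $\log Z_{\lfloor nu\rfloor}^\theta(0)$ depends on $B,B_1,\dots,B_{\lfloor nu\rfloor}$ restricted to $(-\infty,0]$ via the past-adapted recursion~\eqref{queuedef}; and $\log Z_{\lfloor nv\rfloor,\lfloor ns\rfloor}(0,nt)$ depends on $B_{\lfloor nv\rfloor},\dots,B_{\lfloor ns\rfloor}$ restricted to $[0,nt]$. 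Decomposing each two-sided Brownian motion into its two independent restrictions to $(-\infty,0]$ and $[0,\infty)$, one sees that these three blocks are functions of disjoint (hence independent) collections of such restrictions, so they are mutually independent.

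Finally I would invoke Lemma~\ref{lem:indep}: the constituent limits exist by Theorem~\ref{thm:exist} and the Cram\'er-type computations already recorded for $R_\bullet^\theta$ and $U_\bullet^\theta$; in each pairing one rate function (namely $R_\bullet^\theta$, resp.\ $U_\bullet^\theta$) is continuous; and each of $R_\bullet^\theta$, $U_\bullet^\theta$, $J_{\bullet,\bullet}$ vanishes on a half-line, so the required zeros exist. For $G$ this yields $G_{a,s,t}^\theta=R_{t-a}^\theta\square J_{s,t-a}$ at once. For $H$ I would apply the lemma twice --- first to $\log Z_{\lfloor nu\rfloor}^\theta(0)+\log Z_{\lfloor nv\rfloor,\lfloor ns\rfloor}(0,nt)$, obtaining the rate function $U_u^\theta\square J_{s-v,t}$ (again vanishing on a half-line), then to the sum of this with $B(nt)-\theta nt$ --- and use associativity of $\square$ to write the result as $R_t^\theta\square U_u^\theta\square J_{s-v,t}$. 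The only points needing any care are the independence verification (checking that the stationary-model boundary terms $r_k^\theta(0)$ and $Z_0^\theta(nt)$ are built from disjoint parts of the noise) and absorbing the gap between $\lfloor ns\rfloor-\lfloor nv\rfloor$ and $n(s-v)$ through continuity of $J$; neither is a real obstacle.
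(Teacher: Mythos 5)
Your proposal is correct and matches the paper's argument: the paper also obtains the corollary as an immediate consequence of Theorem \ref{thm:exist}, the Cram\'er computations for $R^\theta_\bullet$ and $U^\theta_\bullet$, the stated measurability/independence of the three blocks of noise, and (iterated) application of Lemma \ref{lem:indep}. The extra details you supply (translation invariance, the index shift absorbed via Lemma \ref{lem:sequences}, and checking the zero and continuity hypotheses of Lemma \ref{lem:indep}) are exactly the points the paper leaves implicit.
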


In order to show that \hyperref[LDcoupling]{(\ref*{LDcoupling})} leads to a variational problem, we need some regularity on $G_{a,s,t}^\theta(x)$ and $H_{u,v,s,t}^\theta(x)$. The three results that follow are purely technical, so we defer their proofs to \hyperref[subsec:statreg]{subsection \ref*{subsec:statreg}} of \hyperref[sec:existreg]{Appendix \ref*{sec:existreg}}. \hyperref[lem:Hcont]{Lemma \ref*{lem:Hcont}} gives a strong kind of local uniform continuity of $H_{u,v,s,t}^\theta(x)$ and \hyperref[lem:Gcont]{Lemma \ref*{lem:Gcont}} gives the same for $G_{a,s,t}^\theta(x)$. The difference between the two statements comes from \hyperref[lem:Ginf]{Lemma \ref*{lem:Ginf}}, which shows that $G_{a,s,t}^\theta(x)$ degenerates to infinity locally uniformly near $a = t$.
\begin{lemma}\label{lem:Hcont}
Fix $\theta, s, t > 0$ and a compact set $K \subseteq \mathbb{R}$. Then
\begin{align*}
\lim_{\delta, \gamma, \epsilon \downarrow 0} \sup_{\stackrel{a,b,b' \in [0,s] : |b-b'|< \delta}{r_1, r_2 \in K : |r_1 - r_2|< \epsilon}}\left\{ |H_{a,b,s,t + \gamma}^\theta(r_1) - H_{a,b',s,t}^\theta(r_2)| \right\} = 0.
\end{align*}
\end{lemma}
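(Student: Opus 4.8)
The plan is to bypass the probabilistic definition entirely and work with the identity
\[
H_{u,v,s,t}^\theta(x) \;=\; R_t^\theta \,\square\, U_u^\theta \,\square\, J_{s-v,t}(x)
\]
supplied by Corollary~\ref{cor:infcon}, reading it at the boundary $u=0$ through the continuous extension $U_0^\theta$ and at $v=s$ through $J_{0,t}$. I would prove that this function is jointly continuous in $(u,v,t,x)$ and uniformly continuous on every set $[0,s]^2\times[t,t+1]\times K$; the asserted vanishing of the supremum is then the special case $u=a$, $v\in\{b,b'\}$, $t\mapsto t+\gamma$. The inputs are all already on the table: each of $R_t^\theta$, $U_u^\theta$, $J_{s-v,t}$ is nonnegative, convex, finite, lower semicontinuous and nondecreasing; each vanishes on a left half-line whose right endpoint ($-\theta t$, resp. $-u\Psi_0(\theta)$, resp. $\rho(s-v,t)$) depends continuously on the parameters; $R_t^\theta$ and $U_u^\theta$ each dominate a fixed coercive hinge $a(\cdot-p)^+$ with $a>0$ fixed and $p$ bounded over $u\in[0,s]$, $t\in[t,t+1]$ (take $a=1$ with $p$ from $(R_t^\theta)^*(1)$ and nonnegativity for $R_t^\theta$; $a=\theta/2$ with $p$ from $(U_u^\theta)^*(\theta/2)$, $u\le s$ and nonnegativity for $U_u^\theta$); and each is jointly continuous in its parameters, by the explicit formulas for $R_t^\theta,U_u^\theta$ and by Theorem~\ref{thm:exist} for $J$.

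The first step is to localize the inner convolution $\phi_{u,t}:=R_t^\theta\square U_u^\theta$. On any compact interval of $w$, comparing the bracket $R_t^\theta(w-w')+U_u^\theta(w')$ at $w'=-u\Psi_0(\theta)$ with its blow-up as $w'\to+\infty$ (forced by the hinge minorant of $U_u^\theta$) and as $w'\to-\infty$ (forced by that of $R_t^\theta$) shows that the infimum defining $\phi_{u,t}(w)$ is attained over a compact $w'$-interval that can be chosen independently of $u\in[0,s]$, $t\in[t,t+1]$ and of $w$ in the chosen interval. Hence $\phi_{u,t}(w)$ is a minimum over a fixed compact of the jointly continuous function $(u,t,w,w')\mapsto R_t^\theta(w-w')+U_u^\theta(w')$, so it is jointly continuous in $(u,t,w)$; it also inherits nonnegativity, monotonicity, vanishing on $(-\infty,-\theta t-u\Psi_0(\theta)]$, and a fixed coercive hinge minorant (the infimal convolution of two hinges is a hinge with the smaller slope).

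The second step localizes the outer convolution. Since $z\mapsto\phi_{u,t}(r-z)$ is nonincreasing and, by coercivity of $\phi_{u,t}$ at $+\infty$, blows up as $z\to-\infty$, while for $z$ large enough $\phi_{u,t}(r-z)=0$ and $\phi_{u,t}(r-z)+J_{s-v,t}(z)$ reduces to the nondecreasing function $J_{s-v,t}(z)$, comparison with the value at $z=\rho(s-v,t)$ — namely $\phi_{u,t}(r-\rho(s-v,t))$, which is bounded uniformly — confines the infimum defining $H_{u,v,s,t}^\theta(r)$ to a compact interval $I$ that is uniform over $u,v\in[0,s]$, $t\in[t,t+1]$, $r\in K$. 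Thus $H_{u,v,s,t}^\theta(r)=\min_{z\in I}\{\phi_{u,t}(r-z)+J_{s-v,t}(z)\}$ with $I$ fixed. The integrand is jointly continuous in $(u,v,t,r,z)$ on $[0,s]^2\times[t,t+1]\times K\times I$ (first summand by Step~1, second by Theorem~\ref{thm:exist}), hence uniformly continuous there, and a minimum over the fixed compact $I$ of a uniformly continuous function is uniformly continuous in the remaining variables. Applying the resulting joint modulus of continuity of $(a,b,\gamma,r)\mapsto H_{a,b,s,t+\gamma}^\theta(r)$ with the first argument held fixed gives exactly $\lim_{\delta,\gamma,\epsilon\downarrow0}\sup\{|H_{a,b,s,t+\gamma}^\theta(r_1)-H_{a,b',s,t}^\theta(r_2)|:|b-b'|<\delta,\ |r_1-r_2|<\epsilon\}=0$.

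I expect the real work to be the uniform bookkeeping in the two localization steps: one must check that the compact intervals over which the infima are attained can be chosen independently of $(u,v)$ and of $t$ near the fixed value, and in particular that the degenerate endpoints $u=0$ (where $U_0^\theta$ is merely linear) and $v=s$ (where $J_{0,t}$ appears) do not destroy coercivity — which is precisely where the uniform hinge minorants for $R_t^\theta$ and $U_u^\theta$ and the explicit formula for $U_u^\theta$ are used. Once the infima are confined to fixed compact sets, the joint continuity of $H_{u,v,s,t}^\theta$, and with it the lemma, follow formally.
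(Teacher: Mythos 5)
Your proposal is correct and follows essentially the same route as the paper's proof: both read $H$ through the infimal-convolution formula of Corollary \ref{cor:infcon}, confine the infimum to a compact set chosen uniformly over $a,b\in[0,s]$, $\gamma$ small and $r\in K$ (using boundedness of $\theta t$, $a\Psi_0(\theta)$, $\rho(s-b,t+\gamma)$), and conclude from joint uniform continuity on compacts, with Theorem \ref{thm:exist} supplying the continuity of $J_{s-b,t+\gamma}$ up to the boundary $s-b=0$ needed at $b=s$. The differences are only in bookkeeping: you localize the inner convolution $R^\theta\square U_a^\theta$ explicitly via hinge minorants (the paper simply asserts its continuity in $(a,x)$), you handle the upper end of the outer infimum via monotonicity of $J$ past the zero of $R^\theta\square U_a^\theta$ rather than needing any coercivity of $J$, and you correctly keep the $t+\gamma$ dependence in the $R$-factor, which the paper's writeup silently drops.
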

\begin{lemma}\label{lem:Gcont}
Fix $\theta, s, t > 0$ and $0 < \delta \leq t$ and a compact set $K \subseteq \mathbb{R}$. Then
\begin{align*}
\lim_{\epsilon,\gamma \downarrow 0} \sup_{\stackrel{a_1,a_2 \in [0,t-\delta] : |a_1 - a_2| < \gamma}{ r_1, r_2 \in K : |r_1 - r_2|< \epsilon}}\left\{ |G_{a_1,s,t}^\theta(r_1) - G_{a_2,s,t}^\theta(r_2)|\right\} = 0.
\end{align*}
\end{lemma}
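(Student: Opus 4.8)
The plan is to reduce the claimed local uniform continuity of $G_{a,s,t}^\theta$ to the already-established regularity of the right tail rate function $J_{s,t}$ (Theorem \ref{thm:exist}) and the smooth, explicit form of the Gaussian rate function $R_t^\theta$, using the infimal convolution representation from Corollary \ref{cor:infcon}, namely $G_{a,s,t}^\theta(x) = R_{t-a}^\theta \square J_{s,t-a}(x)$. First I would record the three regularity ingredients: (i) $J_{s,t}(r)$ is jointly continuous and positively homogeneous of degree one on $[0,\infty)\times(0,\infty)\times\mathbb{R}$, hence Lipschitz on any compact subset of that domain bounded away from the boundary $\{t=0\}$; (ii) $R_{t-a}^\theta(x)$ is jointly continuous in $(a,x)$ for $a\in[0,t-\delta]$ and is zero to the left of $-\theta(t-a)$ with quadratic growth to the right, in particular it is locally uniformly Lipschitz on compacts; (iii) the infimum defining the infimal convolution is effectively over a compact set of splitting parameters once $x$ ranges over a compact set $K$, because $R_{t-a}^\theta$ grows quadratically and $J_{s,t-a}$ is nonnegative and eventually grows at least linearly (it dominates a Cramér-type rate function by the argument in Theorem \ref{thm:exist}), so the mass cannot escape to $\pm\infty$.

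With these in hand, the proof is a standard stability estimate for infimal convolutions. Fix the compact $K$ and the parameter $\delta$. For $a_1,a_2\in[0,t-\delta]$ with $|a_1-a_2|<\gamma$ and $r_1,r_2\in K$ with $|r_1-r_2|<\epsilon$, pick a near-minimizer $y$ in $G_{a_1,s,t}^\theta(r_1)=\inf_y\{R_{t-a_1}^\theta(r_1-y)+J_{s,t-a_1}(y)\}$; by the compactness remark (iii), $y$ may be taken in a fixed bounded interval $[-M,M]$ depending only on $K,\delta,\theta,s,t$. Then bound
\begin{align*}
G_{a_2,s,t}^\theta(r_2) - G_{a_1,s,t}^\theta(r_1) &\leq \bigl[R_{t-a_2}^\theta(r_2-y) - R_{t-a_1}^\theta(r_1-y)\bigr] + \bigl[J_{s,t-a_2}(y) - J_{s,t-a_1}(y)\bigr] + o(1),
\end{align*}
where the $o(1)$ absorbs the slack in the near-minimizer. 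The first bracket is controlled by the joint (local uniform) continuity of $R$ in $(a,x)$ on $a\in[0,t-\delta]$, $x-y$ in a compact set — this is where the hypothesis $\delta>0$, equivalently $a$ bounded away from $t$, is used, since $R_{t-a}^\theta$ and its Lipschitz constant blow up as $t-a\downarrow 0$. The second bracket is controlled by the joint continuity of $J$ at the parameter $(s,t-a)$ with $t-a\geq\delta>0$, uniformly over $y\in[-M,M]$. Reversing the roles of $(a_1,r_1)$ and $(a_2,r_2)$ gives the matching lower bound, and letting $\epsilon,\gamma\downarrow 0$ kills all the error terms.

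The main obstacle is step (iii): making the truncation of the infimal-convolution parameter genuinely uniform over $a\in[0,t-\delta]$ and $r\in K$. One needs a lower bound on $J_{s,t-a}(y)$ that is uniform in $a$ — for instance, that $J_{s,t-a}(y)\geq c|y| - C$ for $|y|$ large, with $c,C$ depending only on $\delta,s$ — which follows from monotonicity of $J$ in $t$ together with its positive homogeneity and the fact that it dominates the explicit Cramér rate function associated with a single Brownian increment; combined with the quadratic lower bound on $R_{t-a}^\theta$ this forces any near-minimizer into a bounded set. Once that is nailed down, everything else is bookkeeping with continuity moduli, so I would state (iii) as a short preliminary claim and then give the two-sided estimate above in a few lines.
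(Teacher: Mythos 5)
Your proposal is correct and takes essentially the same route as the paper's (omitted) argument, which, exactly as in the proof of Lemma \ref{lem:Hcont}, combines the representation $G_{a,s,t}^\theta = R_{t-a}^\theta \square J_{s,t-a}$ from Corollary \ref{cor:infcon} with restriction of the infimum to a fixed compact set and joint uniform continuity of $(a,r,y)\mapsto R_{t-a}^\theta(r-y)+J_{s,t-a}(y)$ there (using $t-a\ge\delta$ and Theorem \ref{thm:exist}). The only place you work harder than needed is your step (iii): by Lemma \ref{lem:indep} the infimum is already taken over $[\rho(s,t-a),\,r+\theta(t-a)]$, since both rate functions vanish to the left of their zero points and these zeros are bounded uniformly for $a\in[0,t-\delta]$, $r\in K$; so no coercivity estimate for $J_{s,t-a}$ is required (and your proposed justification via a Cram\'er bound for a single Brownian increment is shaky, since a single increment does not dominate $\log Z$ from above---the paper's Lemma \ref{lem:JGUE} or Lemma \ref{momentbound} would be the correct source if coercivity were needed).
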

\begin{lemma}\label{lem:Ginf}
Fix $\theta, s, t > 0$ and $K \subset \mathbb{R}$ compact. Then
\begin{align*}
\liminf_{a \uparrow t} \inf_{x \in K} \left\{G_{a,s,t}^\theta(x)\right\} &= \infty.
\end{align*}
\end{lemma}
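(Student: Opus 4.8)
**Proof proposal for Lemma \ref*{lem:Ginf}.**

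The plan is to show that the probability $P(B(na,nt) - \theta n(t-a) + \log Z_{1,\lfloor ns\rfloor}(na,nt) \geq nx)$ decays faster than exponentially in $n$ when $a$ is close to $t$, uniformly over $x$ in a fixed compact set $K$. The key point is that as $a \uparrow t$, the time interval $[na, nt]$ available to the polymer shrinks to length $n(t-a)$, which is small; on such a short interval the partition function $Z_{1,\lfloor ns\rfloor}(na,nt)$ cannot be large because the Brownian increments controlling it are of order $\sqrt{n(t-a)}$ while the Lebesgue volume of the relevant Weyl chamber $A_{\lfloor ns\rfloor, n(t-a)}$ is tiny (roughly $(n(t-a))^{\lfloor ns\rfloor}/(\lfloor ns\rfloor)!$). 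So the event in question forces either $B(na,nt)$ to be atypically large or $\log Z_{1,\lfloor ns\rfloor}(na,nt)$ to be atypically large, and both cost super-exponentially when $t - a$ is small.

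The first step is to reduce to the two-sided estimate via a union bound: for any splitting $x = x_1 + x_2$,
\begin{align*}
P\left(B(na,nt) - \theta n(t-a) + \log Z_{1,\lfloor ns\rfloor}(na,nt) \geq nx\right) \leq P\left(B(na,nt) \geq n(x_1 + \theta(t-a))\right) + P\left(\log Z_{1,\lfloor ns\rfloor}(na,nt) \geq n x_2\right).
\end{align*}
For the first term, $B(na,nt)$ is a centered Gaussian of variance $n(t-a)$, so $P(B(na,nt) \geq n c) \leq \exp(-n c^2 / (2(t-a)))$ for $c > 0$, which for fixed $c$ blows up in rate as $a \uparrow t$. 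For the second term, I would use the coupling with Brownian last-passage percolation: $\log Z_{1,\lfloor ns\rfloor}(na,nt) \leq \log |A_{\lfloor ns\rfloor, n(t-a)}| + \max_{\text{path}} \sum B_i$ where the max is the last-passage time over the interval of length $n(t-a)$, and by Brownian scaling this last-passage time equals $\sqrt{n(t-a)}$ times the last-passage time on a unit interval, which by the Baryshnikov--Gravner--Tracy--Widom identification is (up to scaling) the top eigenvalue of a GUE matrix of size $\lfloor ns\rfloor$. Using the crude bound $\log|A_{\lfloor ns\rfloor, n(t-a)}| \leq \lfloor ns\rfloor \log(n(t-a)) - \log(\lfloor ns\rfloor !)$, which is negative for $t-a$ small, together with the Gaussian-type upper tail for the GUE top eigenvalue collected in subsection \ref*{GUEsub}, one gets that $P(\log Z_{1,\lfloor ns\rfloor}(na,nt) \geq n x_2)$ also decays at a rate tending to infinity as $a \uparrow t$, uniformly for $x_2$ bounded.

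Choosing $x_1, x_2$ each bounded (say $x_1 = x_2 = x/2$, with $x$ ranging over the bounded set $K$), combining the two bounds, and taking $-\frac1n \log$ then $\liminf_{n\to\infty}$ and finally $\liminf_{a \uparrow t}$ gives the claim; the uniformity over $K$ is automatic since all constants depend on $x$ only through $|x|$, which is bounded on $K$. The main obstacle I anticipate is making the last-passage / GUE tail estimate genuinely uniform in $n$ and in the shrinking parameter $t-a$ simultaneously — one needs a tail bound for the GUE top eigenvalue of the form $P(\lambda_{\max}^{(N)} \geq \sqrt N(2 + u)) \leq e^{-cNu^{3/2}}$ (or at least Gaussian-type), and then to track how the prefactor $\sqrt{n(t-a)}$ interacts with the (very negative) combinatorial term $\log|A_{\lfloor ns\rfloor, n(t-a)}|$ so that the net exponent is $\geq n \cdot (\text{something} \to \infty \text{ as } a \uparrow t)$. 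This bookkeeping is routine given the appendix estimates but is where the argument has to be done carefully.
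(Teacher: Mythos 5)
Your overall strategy is viable and is in fact a close cousin of the paper's argument: the paper writes $G^\theta_{a,s,t}=R^\theta_{t-a}\,\square\,J_{s,t-a}$ (Corollary \ref{cor:infcon}) and lower-bounds this infimal convolution via a three-case analysis, using the explicit Gaussian rate $R^\theta_{t-a}$ and the GUE-based lower bound on $J_{s,t-a}$ from Lemma \ref{lem:JGUE}; you bypass the inf-convolution representation and argue directly on the probability with a union bound, but the two ingredients that make either version blow up as $a\uparrow t$ are identical (the variance $t-a$ of the boundary term shrinking, and the Weyl-chamber volume/GUE comparison forcing the right-tail rate of $\log Z_{1,\lfloor ns\rfloor}(na,nt)$, equivalently $\rho(s,t-a)\to-\infty$). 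Note also that the ``main obstacle'' you anticipate is not actually there: $a$ is fixed when the $n\to\infty$ limit defining $G^\theta_{a,s,t}$ is taken, and only afterwards does one send $a\uparrow t$, so no simultaneous control in $n$ and $t-a$ is needed and Lemma \ref{lem:JGUE} as stated already supplies the uniform-in-$x$ bound for each fixed $a$.

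The one genuine flaw is the proposed split $x_1=x_2=x/2$. The lemma allows $K$ to contain non-positive $x$ (and in the application in Lemma \ref{lem:varprob2} it must), and for $x\le 0$ your Gaussian term is $P\left(B(na,nt)\ge n\left(\tfrac{x}{2}+\theta(t-a)\right)\right)$ with a non-positive threshold once $a$ is close to $t$; that probability is at least $\tfrac12$, the union bound degenerates, and the claim that uniformity is automatic because the constants depend on $x$ only through $|x|$ is false --- the sign of $x_1$ matters, as you yourself noted when requiring $c>0$ in the Gaussian tail bound. The repair is to split asymmetrically: take $x_1=1$ and $x_2=x-1$, so the Gaussian piece has rate at least $\frac{(1+\theta(t-a))^2}{2(t-a)}\to\infty$, while for the polymer piece $x_2$ is merely bounded below on $K$, which suffices because for $a$ near $t$ Lemma \ref{lem:JGUE} gives a rate at least
\begin{align*}
s\,J_{GUE}\left(\frac{x_2-s\log(t-a)-s+s\log s}{2\sqrt{(t-a)s}}-1\right)\longrightarrow\infty \quad (a\uparrow t),
\end{align*}
uniformly over such $x_2$, the divergence coming from $-s\log(t-a)$ rather than from $x_2$. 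With that one-line change your argument goes through and gives an acceptable alternative to the paper's proof.
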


\subsection{Coarse graining and the variational problem} 
Fix  $a \in [0,t)$ and $0 < \delta \leq t - a$. Then \hyperref[LDcoupling]{(\ref*{LDcoupling})} implies the following lower bounds
\begin{align}
\log \left(n \int_{a}^{a + \delta} \frac{Z_0^\theta(nu)}{Z_0^\theta(nt)}Z_{1,\lfloor n s \rfloor}(nu,nt)du \right)&\leq \sum_{k=1}^{\lfloor n s \rfloor} r_k^\theta(nt), \label{ineq:lowerbd1}  \\
-\log Z_0^\theta (nt) + \log Z_j^\theta(0) + \log Z_{j,\lfloor n s \rfloor}(0,nt) &\leq \sum_{k=1}^{\lfloor n s \rfloor} r_k^\theta(nt). \label{ineq:lowerbd2} 
\end{align}
For any partition $\{a_i\}_{i=0}^{N}$ of $[0,t]$, we also have
\begin{align}
\sum_{k=1}^{\lfloor ns \rfloor} r_k^\theta(nt) \leq & \max_{0 \leq i \leq N-1}\left\{ \log \left( n \int_{a_i}^{a_{i+1}}  \frac{Z_0^\theta(nu)}{Z_0^\theta(nt)}Z_{1,\lfloor n s \rfloor}(nu,nt)du \right)\right\} \notag \\
&\vee \max_{1 \leq j \leq \lfloor ns \rfloor} \left\{ - \log Z_0^\theta (nt) + \log Z_j^\theta(0)+ \log Z_{j,\lfloor n s \rfloor}(0,nt)\right\} + \log( N + 1 + ns). \label{ineq:upperbd1}
\end{align}
Our goal is now to show that estimates \hyperref[ineq:lowerbd1]{(\ref*{ineq:lowerbd1})}, \hyperref[ineq:lowerbd2]{(\ref*{ineq:lowerbd2})}, and \hyperref[ineq:upperbd1]{(\ref*{ineq:upperbd1})} above lead to a variational characterization of the right tail rate function $J_{s,t}(x)$:
\begin{align}
U_s^\theta(x) &= \min \{ \inf_{0 \leq a < t} \left\{ G_{a,s,t}^\theta(x) \right\}, \inf_{0 \leq a < s} \left\{H_{a, a,s,t}^\theta(x)\right\} \} \notag \\
&= \min\{\inf_{0 \leq a < t} \left\{ R_{t - a}^\theta \square J_{s,t - a}(x) \right\}, \inf_{0 \leq a < s} \left\{R_t^\theta \square U_a^\theta \square J_{s - a, t}(x)\right\}\}. \label{Uvarprob}
\end{align}
To improve the presentation of the paper, we have moved some of the estimates in the proofs that follow to \hyperref[sec:estimates]{Appendix \ref*{sec:estimates}}.
\begin{lemma}\label{lem:varprob1}
Fix $\theta > 0$, $(s,t) \in (0,\infty)^2$ and $x \in \mathbb{R}$. Then
\begin{align*}
U_s^\theta(x) &\leq \min \{ \inf_{0 \leq a < t}\left\{ G_{a,s,t}^\theta(x)\right\}, \inf_{0 \leq a < s} \left\{ H_{a, a,s,t}^\theta(x) \right\} \}.
\end{align*}
\end{lemma}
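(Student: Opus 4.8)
The plan is to use the lower bounds \eqref{ineq:lowerbd1} and \eqref{ineq:lowerbd2} together with the definition of $U_s^\theta$ as a limit of log-probabilities. Recall that $U_s^\theta(x) = -\lim \frac{1}{n}\log P(\sum_{k=1}^{\lfloor ns\rfloor} r_k^\theta(0) \geq nx)$, and that by the Burke property (more precisely, by the shift-invariance of the stationary model in the time variable) $\sum_{k=1}^{\lfloor ns\rfloor} r_k^\theta(nt)$ has the same distribution as $\sum_{k=1}^{\lfloor ns\rfloor} r_k^\theta(0)$ for each fixed $n$ and $t$. Hence $U_s^\theta(x) = -\lim \frac{1}{n}\log P(\sum_{k=1}^{\lfloor ns\rfloor} r_k^\theta(nt) \geq nx)$ as well, and this is the form we will compare against.

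The core of the argument: fix $a \in [0,t)$ and $\delta \in (0, t-a]$. Inequality \eqref{ineq:lowerbd1} says $\log(n\int_a^{a+\delta} \frac{Z_0^\theta(nu)}{Z_0^\theta(nt)} Z_{1,\lfloor ns\rfloor}(nu,nt)\,du) \leq \sum_{k=1}^{\lfloor ns\rfloor} r_k^\theta(nt)$, so the event that the left side is $\geq nx$ is contained in the event $\{\sum_{k=1}^{\lfloor ns\rfloor} r_k^\theta(nt) \geq nx\}$. Taking $-\frac1n\log P$ and sending $n\to\infty$, monotonicity of probability gives
\begin{align*}
-\liminf_{n\to\infty} \frac1n \log P\left(\log\left(n\int_a^{a+\delta} \tfrac{Z_0^\theta(nu)}{Z_0^\theta(nt)} Z_{1,\lfloor ns\rfloor}(nu,nt)\,du\right) \geq nx\right) \geq U_s^\theta(x).
\end{align*}
I then need to show that the left-hand side above is bounded below by $G_{a,s,t}^\theta(x)$ up to a term that vanishes as $\delta\downarrow 0$ for a suitable choice of $a$, or rather that it equals $G_{a+\text{(something)},s,t}^\theta(x)$ in the limit. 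The cleanest route is to lower bound the integral by a single representative term: since the integrand is (after taking logs) a continuous process, one uses that $\int_a^{a+\delta} \geq \delta \cdot (\text{value at some point})$ is too crude, but $\log$ of the integral is at least $\log\delta$ plus an infimum over $u\in[a,a+\delta]$ of the integrand. This infimum is controlled by the continuity estimate Lemma \ref{lem:Gcont}, which says $G_{\cdot,s,t}^\theta(\cdot)$ is locally uniformly continuous away from $a=t$: taking $\delta$ small forces the infimum's rate function close to $G_{a,s,t}^\theta(x)$. Passing to the limit $\delta\downarrow 0$ and then optimizing over $a\in[0,t)$ yields $\inf_{0\leq a<t} G_{a,s,t}^\theta(x) \geq U_s^\theta(x)$. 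The analogous argument with \eqref{ineq:lowerbd2}, using $j = \lfloor na\rfloor$ for $a\in[0,s)$ and the identification $H_{a,a,s,t}^\theta$ from the definition \eqref{GHdef}, gives $\inf_{0\leq a<s} H_{a,a,s,t}^\theta(x) \geq U_s^\theta(x)$. Taking the minimum of the two bounds is the claim.

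The main obstacle is the passage from $\log$ of an integral over $[a,a+\delta]$ to a pointwise rate function: one must ensure that shrinking $\delta$ does not distort the large deviation rate, and that the $\log\delta$ correction (which is negative) does not hurt — it only helps, since it makes the event $\{\log(\cdots)\geq nx\}$ smaller, but one must be careful that after dividing by $n$ it is $o(1)$, which it is since $\delta$ is fixed while $n\to\infty$. The subtle point is instead the interchange of $\liminf_{n}$ with the $\inf$ over $u\in[a,a+\delta]$ inside the log; this is where Lemma \ref{lem:Gcont}'s \emph{uniform} continuity in both the space parameter and the spatial argument $r$ is essential, and also where one should invoke the estimates relegated to Appendix \ref{sec:estimates} to control the fluctuations of $B(na,nt) - \theta n(t-a)$ uniformly over the small window. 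Once this uniformity is in hand, the rest is bookkeeping with the infimal-convolution identities of Corollary \ref{cor:infcon}.
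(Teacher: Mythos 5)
Your overall architecture coincides with the paper's: the $H$ half is exactly the paper's one-line argument (take $j=\lfloor na\rfloor$ in \eqref{ineq:lowerbd2} and use stationarity of the $r_k^\theta$'s, which indeed justifies replacing $\sum_k r_k^\theta(nt)$ by $\sum_k r_k^\theta(0)$), and for the $G$ half the paper also starts from the event containment given by \eqref{ineq:lowerbd1} and finishes by sending $\delta,\epsilon\downarrow 0$ with Lemma \ref{lem:Gcont} and optimizing over $a$. The gap is at the central step. Since the containment gives $U_s^\theta(x)\le\liminf_n -\tfrac1n\log P\bigl(\log\bigl(n\int_a^{a+\delta}\tfrac{Z_0^\theta(nu)}{Z_0^\theta(nt)}Z_{1,\lfloor ns\rfloor}(nu,nt)\,du\bigr)\ge nx\bigr)$, what you must produce is an \emph{upper} bound on the rate of this window event (equivalently a \emph{lower} bound on its probability) in terms of the pointwise event defining $G$ in \eqref{GHdef}; your write-up at one point asks for the opposite inequality ("bounded below by $G_{a,s,t}^\theta(x)$"), and, more importantly, the mechanism you propose for the reduction does not do the job. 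Replacing the integral by $\log(n\delta)$ plus $\inf_{u\in[a,a+\delta]}$ of the log-integrand leaves you needing a lower bound on $P(\inf_u X_n(u)\ge nx)$ for the random field $X_n(u)=\log\tfrac{Z_0^\theta(nu)}{Z_0^\theta(nt)}+\log Z_{1,\lfloor ns\rfloor}(nu,nt)$. Lemma \ref{lem:Gcont} cannot supply this: it is a continuity statement about the deterministic rate functions $(a,r)\mapsto G_{a,s,t}^\theta(r)$, and pointwise rates say nothing about the probability that the infimum over the whole window is large. Likewise the appendix estimate you invoke (Lemma \ref{uppercoarse1}) controls the opposite tail — it bounds the probability that the window contribution is too \emph{large}, which is what Lemma \ref{lem:varprob2} needs — not the lower-tail control required here.

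The paper closes exactly this hole with the supermultiplicativity bound \eqref{ineq:intcoarse}: for every $u\in[a,a+\delta]$, $Z_{1,\lfloor ns\rfloor}(nu,nt)\ge Z_{1,1}(nu,n(a+\delta))\,Z_{1,\lfloor ns\rfloor}(n(a+\delta),nt)$, so the window integral factors into the fixed quantity $\tfrac{Z_0^\theta(n(a+\delta))}{Z_0^\theta(nt)}Z_{1,\lfloor ns\rfloor}(n(a+\delta),nt)$ — whose event $\{\,\cdot\ge n(x+\epsilon)\}$ is precisely the one defining $G_{a+\delta,s,t}^\theta(x+\epsilon)$ — times an error integral built from $B$ and $B_1$ on $[na,n(a+\delta)]$. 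These two factors live on disjoint time windows, so independence splits the probability, and the error term satisfies $\{\log(\text{error})\ge -n\epsilon\}$ with probability tending to one by bounding below with the minimum of the Brownian increments over the window. This yields $U_s^\theta(x)\le G_{a+\delta,s,t}^\theta(x+\epsilon)$, after which your concluding limits via Lemma \ref{lem:Gcont} go through verbatim. Without this factorization (or an equivalent uniform-in-$u$ lower bound on the field over the window), the interchange of $\liminf_n$ with $\inf_{u}$ that you flag as "subtle" is exactly the unproved step, so as written the proposal is incomplete at its key point.
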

\begin{proof}
For $a \in [0,s)$, taking $j = \lfloor a n \rfloor$ in inequality \hyperref[ineq:lowerbd2]{(\ref*{ineq:lowerbd2})} above immediately implies 
\begin{align}\label{ineq:Hub}
U_s^\theta(x) &\leq H_{a, a,s,t}^\theta(x).
\end{align}
Fix $\delta \in (0,t)$; then for all $a \in [0, t - \delta)$ and all $u \in[0, a+\delta]$, we have
\begin{align}
Z_{1,1}(nu, n(a+\delta)) Z_{1, \lfloor ns \rfloor}(n (a + \delta), nt)\leq Z_{1 ,\lfloor ns \rfloor}(nu, nt). \label{ineq:intcoarse}
\end{align}
It then follows that
\begin{align*}
&P\left(\log \left(n \int_{a}^{a + \delta} \frac{Z_0^\theta(nu)}{Z_0^\theta(nt)}Z_{1,\lfloor n s \rfloor}(nu,nt)du \right) \geq nx \right) \\
&\hspace{3pc} \geq P\Big(\log Z_{1, \lfloor ns \rfloor}(n(a + \delta), nt) + \log \frac{Z_0^\theta(n(a+\delta))}{Z_0^\theta(nt)}\\
&\hspace{6pc}+ \log \left( n \int_{a}^{a + \delta} \frac{Z_0^\theta(nu)}{Z_0^\theta(n(a+\delta))}Z_{1,1}(nu,n(a+\delta) du\right) \geq nx \Big).
\end{align*}
Fix $\epsilon >0$. By independence of the Brownian environment, we find that
\begin{align}
&\frac{-1}{n} \log P\left(\log \left(n \int_{a}^{a + \delta} \frac{Z_0^\theta(nu)}{Z_0^\theta(nt)}Z_{1,\lfloor n s \rfloor}(nu,nt)du \right) \geq nx \right) \notag \\
&\hspace{3pc} \leq \frac{-1}{n}\log P\left(\log Z_{1, \lfloor ns \rfloor}(n(a + \delta), nt) + \log \frac{Z_0^\theta(n(a+\delta))}{Z_0^\theta(nt)} \geq n(x + \epsilon)\right)  \\
&\hspace{3pc} + \frac{-1}{n} \log P\left( \log \left( n \int_{a}^{a + \delta} \frac{Z_0^\theta(nu)}{Z_0^\theta(n(a+\delta))}Z_{1,1}(nu,n(a+\delta) du\right) \geq -n\epsilon \right). \label{varprob1error}
\end{align}
Applying the lower bound obtained by considering the minimum of the Brownian increments on the interval $[a, a+\delta]$ allows us to show that as $n \to \infty$ the probability in line \hyperref[varprob1error]{(\ref*{varprob1error})} tends to one. Then taking $\limsup$ and recalling inequality \hyperref[ineq:lowerbd1]{(\ref*{ineq:lowerbd1})}, we obtain
\begin{align}\label{ineq:Gub}
U_s^\theta(x) &\leq G_{a + \delta, s,t}^\theta(x + \epsilon).
\end{align}
By \hyperref[lem:Gcont]{Lemma \ref*{lem:Gcont}}, we may take $\delta,\epsilon \downarrow 0$ in \hyperref[ineq:Gub]{(\ref*{ineq:Gub})}. Optimizing over $a$ in the resulting equation and in \hyperref[ineq:Hub]{(\ref*{ineq:Hub})} gives the result.
\end{proof}
\begin{lemma}\label{lem:varprob2}
Fix $\theta > 0$, $(s,t) \in (0,\infty)^2$ and $x \in \mathbb{R}$. Then
\begin{align*}
U_s^\theta(x) &\geq \min \{ \inf_{0 \leq a < t} \left\{ G_{a,s,t}^\theta(x) \right\}, \inf_{0 \leq a < s} \left\{ H_{a,a,s,t}^\theta(x) \right\} \}.
\end{align*}
\end{lemma}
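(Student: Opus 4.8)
The plan is to extract the reverse inequality from the upper bound \hyperref[ineq:upperbd1]{(\ref*{ineq:upperbd1})}. Fix a partition $0 = a_0 < a_1 < \dots < a_N = t$ of mesh at most $\delta$. A union bound over the $N + \lfloor ns\rfloor = e^{o(n)}$ terms on the right of \hyperref[ineq:upperbd1]{(\ref*{ineq:upperbd1})} shows that $-\tfrac1n\log P\bigl(\sum_{k=1}^{\lfloor ns\rfloor} r_k^\theta(nt) \geq nx\bigr)$ is, up to an $o(1)$ error, at least the minimum over $0\leq i\leq N-1$ and $1\leq j\leq\lfloor ns\rfloor$ of $-\tfrac1n\log P\bigl(\log(n\int_{a_i}^{a_{i+1}} \tfrac{Z_0^\theta(nu)}{Z_0^\theta(nt)} Z_{1,\lfloor ns\rfloor}(nu,nt)\,du) \geq nx - \log(N+1+ns)\bigr)$ and $-\tfrac1n\log P\bigl(-\log Z_0^\theta(nt) + \log Z_j^\theta(0) + \log Z_{j,\lfloor ns\rfloor}(0,nt)\geq nx - \log(N+1+ns)\bigr)$. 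Bounding these two families from below separately produces the two branches of the asserted minimum.

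For the first family there are only $N$ terms, with $N$ fixed, so pointwise limits suffice. For $u\in[a_i,a_{i+1}]$ I would use \hyperref[semimg]{(\ref*{semimg})} and the monotonicity of $C_{1,\lfloor ns\rfloor}(\cdot,nt)$ in its first argument to get
\begin{align*}
\log\Bigl(\tfrac{Z_0^\theta(nu)}{Z_0^\theta(nt)} Z_{1,\lfloor ns\rfloor}(nu,nt)\Bigr) &\leq \theta n\delta + \bigl[B(na_i,nt) - \theta n(t-a_i) + \log Z_{1,\lfloor ns\rfloor}(na_i,nt)\bigr]\\
&\quad + \sup_{v\in[a_i,a_{i+1}]}\bigl(-B(na_i,nv) - B_1(na_i,nv)\bigr),
\end{align*}
so that the $i$-th event above forces the bracketed quantity $B(na_i,nt) - \theta n(t-a_i) + \log Z_{1,\lfloor ns\rfloor}(na_i,nt)$, whose rate function is $G_{a_i,s,t}^\theta$, to exceed $n(x-\theta\delta-\epsilon) - o(n)$ unless the displayed Brownian supremum exceeds $n\epsilon$, and by the reflection principle the latter event has probability at most $2\exp(-n\epsilon^2/(4\delta))$. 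Letting $n\to\infty$, then $\delta\downarrow 0$, then $\epsilon\downarrow 0$, and using the continuity of $G_{a,s,t}^\theta$ from \hyperref[lem:Gcont]{Lemma \ref*{lem:Gcont}} for $a$ bounded away from $t$ together with \hyperref[lem:Ginf]{Lemma \ref*{lem:Ginf}} to discard the cell $[a_{N-1},t]$ once the mesh is small, this family contributes the bound $\inf_{0\leq a<t} G_{a,s,t}^\theta(x)$.

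For the second family, note that (using $Z_0^\theta(t) = e^{\theta t - B(t)}$ and \hyperref[statdecomp]{(\ref*{statdecomp})} at $t = 0$) the random variable $-\log Z_0^\theta(nt) + \log Z_j^\theta(0) + \log Z_{j,\lfloor ns\rfloor}(0,nt)$ is the sum of the three \emph{independent} quantities $B(nt) - \theta nt$, $\sum_{k=1}^j r_k^\theta(0)$, and $\log Z_{j,\lfloor ns\rfloor}(0,nt) \stackrel{d}{=} \log Z_{1,\lfloor ns\rfloor - j + 1}(0,nt)$. The first has the exact Gaussian bound $P(B(nt) - \theta nt\geq ny)\leq e^{-nR_t^\theta(y)}$; by the Burke property the second is a sum of $j$ i.i.d.\ copies of $-\log\Gamma(\theta,1)$, so Cram\'er's theorem and positive homogeneity of $U_\cdot^\theta$ yield $P(\sum_{k=1}^j r_k^\theta(0)\geq ny)\leq e^{-nU_{j/n}^\theta(y)}$ for every $n$; and the third is controlled by the large deviation upper bound for the point-to-point partition function, taken uniformly in the number of levels $\lfloor ns\rfloor - j + 1$. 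Discretizing the infimal convolution of \hyperref[cor:infcon]{Corollary \ref*{cor:infcon}} then gives, for each $\epsilon > 0$ and all large $n$, the estimate $P(\text{$j$-th event}) \leq \exp\bigl(-n(H_{j/n,j/n,s,t}^\theta(x) - \epsilon)\bigr)$ \emph{uniformly} in $1\leq j\leq\lfloor ns\rfloor$; \hyperref[lem:Hcont]{Lemma \ref*{lem:Hcont}} and compactness of $[0,s]$ then allow passage from $\min_j H_{j/n,j/n,s,t}^\theta(x)$ to $\inf_{0\leq a\leq s} H_{a,a,s,t}^\theta(x)$, which equals $\inf_{0\leq a<s} H_{a,a,s,t}^\theta(x)$ since $a\mapsto H_{a,a,s,t}^\theta(x)$ is continuous by \hyperref[cor:infcon]{Corollary \ref*{cor:infcon}} and the joint continuity of $J$ in \hyperref[thm:exist]{Theorem \ref*{thm:exist}}. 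Together with \hyperref[lem:varprob1]{Lemma \ref*{lem:varprob1}} this establishes \hyperref[Uvarprob]{(\ref*{Uvarprob})}.

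\textbf{Main obstacle.} The delicate point is the uniformity over the roughly $ns$ many terms in the second family: a fixed-$j$ large deviation statement does not suffice, since the maximum in \hyperref[ineq:upperbd1]{(\ref*{ineq:upperbd1})} is over an index set whose size grows with $n$. This is resolved by the structure above --- genuinely $n$-uniform exponential bounds for two of the three independent constituents (the Gaussian tail, and Cram\'er for the i.i.d.\ $\Gamma$ factors furnished by Burke, rescaled by homogeneity) plus a level-uniform large deviation upper bound for $\log Z_{1,m}(0,nt)$ with $m/n$ in a compact set --- after which the equicontinuity of the rate functions supplied by \hyperref[lem:Hcont]{Lemma \ref*{lem:Hcont}} upgrades the discrete minimum to the continuous infimum. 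The first family presents no such difficulty, since it has only $N$ terms with $N$ fixed; there \hyperref[lem:Ginf]{Lemma \ref*{lem:Ginf}} is precisely what permits the partition cell abutting $t$ to be dropped once $\delta$ is small.
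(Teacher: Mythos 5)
Your treatment of the integral family is essentially the paper's: the supermultiplicativity bound $Z_{1,\lfloor ns\rfloor}(nu,nt)\leq Z_{1,1}(na_i,nu)^{-1}Z_{1,\lfloor ns\rfloor}(na_i,nt)$ plus a reflection-principle estimate for the Brownian supremum is exactly what \hyperref[uppercoarse1]{Lemma \ref*{uppercoarse1}} does, and your use of \hyperref[lem:Gcont]{Lemma \ref*{lem:Gcont}} and \hyperref[lem:Ginf]{Lemma \ref*{lem:Ginf}} to pass to $\inf_{0\leq a<t}G^\theta_{a,s,t}(x)$ is the intended argument. The problem is the second family. You correctly identify that the whole difficulty is uniformity over the $\sim ns$ indices $j$, but your resolution hinges on ``a level-uniform large deviation upper bound for $\log Z_{1,m}(0,nt)$ with $m/n$ in a compact set,'' which you assert rather than prove. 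Nothing in \hyperref[thm:exist]{Theorem \ref*{thm:exist}} or \hyperref[lem:sequences]{Lemma \ref*{lem:sequences}} supplies this: those are pointwise limit statements for a fixed $(s,t,r)$ (or a fixed limiting $(s,t)$), and here $s'=(\lfloor ns\rfloor-j)/n$ sweeps out a set that becomes dense in $[0,s]$. A genuinely $n$-uniform bound of the form $P(\log Z_{1,m}(0,nt)\geq nr)\leq e^{-nJ_{m/n,t}(r)+O(1)}$ can in principle be extracted from the approximate subadditivity of $T$ in the proof of \hyperref[thm:existreg]{Theorem \ref*{thm:existreg}} (subadditivity gives $T\geq mJ-C$ nonasymptotically), but only in the tilted time coordinate $mt-1$ and along the lattice $m=\lfloor ns'\rfloor$, and you would still have to make it uniform in $r$ and then carry out a uniform discretization of the triple infimal convolution with an equicontinuity argument for the family $H^\theta_{j/n,j/n,s,t}$. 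None of this is routine, and as written it is a gap.

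The paper avoids needing any such uniform bound. It coarse-grains the spatial index as well: for $\lfloor b_in\rfloor<j<\lfloor b_{i+1}n\rfloor$ it shifts time by $n\gamma$ and uses $Z_{\lfloor b_in\rfloor,\lfloor ns\rfloor}(0,n(t+\gamma))\geq Z_{\lfloor b_in\rfloor,j}(0,n\gamma)Z_{j,\lfloor ns\rfloor}(n\gamma,n(t+\gamma))$ together with $\log Z^\theta_j(0)\leq\log Z^\theta_{\lfloor b_{i+1}n\rfloor}(0)-\sum_{k=j+1}^{\lfloor b_{i+1}n\rfloor}r_k^\theta(0)$, so that the $j$-th event is dominated by the event indexed by the \emph{partition point} (at level $x-2\epsilon$) plus two error events. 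The error probabilities $P(\log Z_{\lfloor b_in\rfloor,j}(0,n\gamma)\leq-n\epsilon)$ and $P(\sum_{k=j+1}^{\lfloor b_{i+1}n\rfloor}r_k^\theta(0)\leq-n\epsilon)$ are bounded by $e^{-np\epsilon/2+o(n)}$ \emph{uniformly in $j$} by elementary exponential Chebyshev estimates (\hyperref[momentbound]{Lemma \ref*{momentbound}} and the known gamma moment generating function), with $p$ sent to infinity at the end. Thus only finitely many pointwise limits $H^\theta_{b_{i+1},b_i,s,t+\gamma}(x-3\epsilon)$ are ever invoked, and \hyperref[lem:Hcont]{Lemma \ref*{lem:Hcont}} removes the $\delta,\gamma,\epsilon$ perturbations. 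If you want to keep your architecture, you must either adopt this spatial coarse-graining or actually establish the nonasymptotic, level-uniform upper bound you are invoking.
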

\begin{proof}
Fix a large $p > 1$ and small $\epsilon, \gamma > 0$. Consider uniform partitions $\{a_i\}_{i=0}^{M}$ of $[0,t]$ and $\{b_i\}_{i=0}^{N}$ of $[0,s]$ of mesh $\nu = \frac{t}{M+1}$ and $\delta = \frac{s}{N+1}$ respectively. We will add restrictions on these parameters later in the proof.  Take $n$ sufficiently large that $\lfloor b_i n \rfloor < \lfloor b_{i+1} n\rfloor $ for all $i$. 

Fix $j < \lfloor ns \rfloor$ not equal to any of the partition points $\lfloor b_i n \rfloor$ and consider $i$ so that $\lfloor b_i n \rfloor < j < \lfloor b_{i+1} n \rfloor$. Notice that $Z_0^\theta(nt)$ is $\sigma(B(nt))$ measurable and $ Z_j^\theta(0)$ is measurable with respect to $\sigma(B(s),B_1(s), \dots B_j(s) : s \leq 0)$, so these random variables and $Z_{j, \lfloor ns \rfloor}(u,v)$ are mutually independent if $0\leq u < v$. It follows from translation invariance and this independence that
\begin{align*}
&P\left(- \log Z_0^\theta(nt) + \log Z_j^\theta(0) + \log Z_{j, \lfloor ns \rfloor}(0,nt) \geq nx \right) \\
&\hskip80pt = P\left(- \log Z_0^\theta(nt) + \log Z_j^\theta(0) + \log Z_{j, \lfloor ns \rfloor}(n \gamma,n(t + \gamma)) \geq nx \right).
\end{align*}
We have
\begin{align*}
Z_{\lfloor b_i n \rfloor , \lfloor ns \rfloor}(0 , n(t + \gamma)) &\geq Z_{\lfloor b_i n \rfloor, j}(0 ,n \gamma) Z_{j, \lfloor ns \rfloor}(n \gamma, n(t + \gamma)).
\end{align*}
It then follows that
\begin{flalign*}
&P\left( - \log Z_0^\theta(nt) + \log Z_j^\theta(0) + \log Z_{j, \lfloor ns \rfloor}(0,nt) \geq nx \right) & \\
&\qquad \qquad\leq P\left( - \log Z_0^\theta(nt) + \log Z_{\lfloor b_{i+1} n \rfloor}^\theta(0) + \log Z_{\lfloor b_i n \rfloor , \lfloor ns \rfloor}(0 , n(t + \gamma)) \geq n (x - 2\epsilon ) \right) \\
&\qquad \qquad + P \left(\log Z_{\lfloor b_i n \rfloor, j}(0, n \gamma) \leq - n \epsilon \right) + P\left(\sum_{k= j+1}^{ \lfloor b_{i+1} n \rfloor} r_k^\theta(0) \leq - n \epsilon \right).
\end{flalign*}
Using the moment bound in  \hyperref[momentbound]{Lemma \ref*{momentbound}} with $\xi = -p$ for $p > 1$ and the exponential Markov inequality gives the bound
\begin{align*}
P \left( \log Z_{\lfloor b_i n \rfloor, j}(0, n\gamma) \leq - n \epsilon  \right) &\leq e^{- n p\left(\epsilon - p  \gamma - \delta \log\frac{\delta}{\gamma}\right) + o(n)} \leq e^{- n\frac{\epsilon}{2} p + o(n)}.
\end{align*}
For the last inequality, we first require $\gamma < \frac{\epsilon}{4p}$ and then take $\delta$ small enough that $\delta \log \frac{\delta}{\gamma} < \frac{\epsilon}{4}$.
The exponential Markov inequality and the known moment generating function of the i.i.d. sum give the bound
\begin{align*}
P\left(\sum_{k= j+1}^{ \lfloor b_{i+1} n \rfloor} r_k^\theta(0) \leq - n \epsilon \right) &\leq e^{-n p\left(\epsilon - \delta p^{-1}\log \left(\Gamma(\theta + p) \Gamma(\theta)^{-1}\right) \right)} \leq e^{- n p \frac{\epsilon}{2}}
\end{align*}
where in the last step we additionally require $\delta < \frac{\epsilon p}{4} \log \left(\Gamma(\theta + p) \Gamma(\theta)^{-1}\right)^{-1}$. For the case that $j$ is a partition point, we have
\begin{align*}
&P\left(- \log Z_0^\theta(nt) + \log Z_{\lfloor b_i n \rfloor}^\theta(0) + \log Z_{ \lfloor b_i n \rfloor, \lfloor ns \rfloor}(0,nt) \geq nx \right) \\
&\hspace{3pc} \leq P\left(- \log Z_0^\theta(nt) + \log Z_{\lfloor b_{i+1} n \rfloor}^\theta(0) +Z_{\lfloor b_i n \rfloor , \lfloor ns \rfloor}(0 , n(t + \gamma)) \geq n (x - 2\epsilon ) \right) \\
&\hspace{3pc} + P\left(\sum_{k= \lfloor b_i n \rfloor }^{ \lfloor b_{i+1} n \rfloor} r_k^\theta(0) \leq - 2n \epsilon \right).
\end{align*}
and the same error bound as above applies. We now turn to the problem of estimating the integral
\begin{align*}
&P \left( \log \left( n \int_{a_i}^{a_{i+1}} \frac{Z_0^\theta(nu)}{Z_0^\theta(nt)}Z_{1,\lfloor n s \rfloor}(nu,nt)du\right) \geq nx\right) \\
&\leq P \left( \log \left(\frac{Z_0^\theta(na_i)}{Z_0^\theta(nt)}Z_{1,\lfloor ns \rfloor}(na_i, nt)\right) \geq n (x - \epsilon) \right) \\
&\qquad \hspace{3pc} + P\left( \log \left( n \int_{a_i}^{a_{i+1}} \frac{Z_0^\theta(nu)}{Z_0^\theta(n a_i)}\frac{Z_{1,\lfloor n s \rfloor}(nu,nt)}{Z_{1,\lfloor n s \rfloor}(n a_i,nt)}du \right) \geq n \epsilon \right).
\end{align*}
By \hyperref[uppercoarse1]{Lemma \ref*{uppercoarse1}} we have
\begin{align*}
P\left( \log \left( n \int_{a_i}^{a_{i+1}} \frac{Z_0^\theta(nu)}{Z_0^\theta(n a_i)} \frac{Z_{1,\lfloor n s \rfloor}(nu,nt)}{Z_{1,\lfloor n s \rfloor}(n a_i,nt)}du \right) \geq n \epsilon \right) &\leq \exp\left\{-n  \left(\frac{\epsilon - \theta \nu}{2\sqrt{\nu}}\right)^2 +o(n)\right\}
\end{align*}
where we require $\nu < \frac{\epsilon}{\theta}$.

Take $n$ sufficiently large that $\log(ns + N) \leq n \epsilon$. It follows from \hyperref[ineq:upperbd1]{(\ref*{ineq:upperbd1})} and union bounds that
\begin{flalign*}
&\frac{1}{n} \log P \left( \sum_{k=1}^{\lfloor ns \rfloor} r_k^\theta(nt) \geq nx \right) \leq \frac{1}{n} \log(ns + N) & \\
&\qquad+\max_{0 \leq i \leq M-1}\Big\{\frac{1}{n} \log P \left( \log \left( n \int_{a_i}^{a_{i+1}} Z_0^\theta(nu) Z_0^\theta(nt)^{-1} Z_{1,\lfloor n s \rfloor}(nu,nt)du \right) \geq n(x- \epsilon) \right)\Big \}  \\
&\qquad \vee\max_{1 \leq j \leq \lfloor ns \rfloor}\Big\{ \frac{1}{n} \log P\left( -\log Z_0^\theta (nt) + \log Z_j^\theta(0) + \log Z_{j,\lfloor n s \rfloor}(0,nt)  \geq n(x - \epsilon) \right) \Big\}.
\end{flalign*}
Combining this with the previous estimates, multiplying by $-1$ and sending $n \to \infty$ gives
\begin{align*}
U_s^\theta(x) &\geq \min_{0 \leq i \leq M -1} \left\{G_{a_i s, t}^\theta(x - 2 \epsilon)\right\} \wedge \left(\frac{\epsilon - \theta \nu}{2\sqrt{\nu}}\right)^2 \wedge \frac{p \epsilon}{2} \wedge \min_{0 \leq i \leq N -1}\left\{ H_{b_{i+1}, b_i, s, t + \gamma}(x - 3 \epsilon)\right\}\\
&\geq \inf_{a \in [0,t)}\left\{G_{a,s,t}^\theta(x-2\epsilon)\right\}\wedge \left(\frac{\epsilon - \theta \nu}{2\sqrt{\nu}}\right)^2 \wedge \frac{p \epsilon}{2}  \\
&\wedge\inf_{a \in [0,s)}\left\{H_{a, a, s, t }(x) -  \sup_{a,b,b' \in [0,s] : |b-b'|< \delta}\left\{ |H_{a,b,s,t + \gamma}^\theta(x -3 \epsilon) - H_{a,b',s,t}^\theta(x)|\right\}\right\}.
\end{align*}
We first send $\delta \downarrow 0$, then $\gamma \downarrow 0$, then $\nu \downarrow 0$, then $p \uparrow \infty$. By \hyperref[lem:Ginf]{Lemma \ref*{lem:Ginf}}, there is $\eta > 0$ so that for all $\epsilon \in [0,1]$, we have
\begin{align*}
 \inf_{a \in [0,t )}\left\{G_{a,s,t}^\theta(x-2\epsilon)\right\} &= \inf_{a \in [0,t - \eta]}\left\{ G_{a,s,t}^\theta(x-2\epsilon)\right\}.
\end{align*} 
Now, take $\epsilon \downarrow 0$ and use Lemmas \hyperref[lem:Hcont]{\ref*{lem:Hcont}} and \hyperref[lem:Gcont]{\ref*{lem:Gcont}}. This gives the desired bound
\begin{align*}
U_s^\theta(x) &\geq \min\{\inf_{a \in [0,t)}\left\{G_{a,s,t}^\theta(x)\right\}, \inf_{a \in [0,t)} \left\{ H_{a,a,s,t}^\theta(x)\right\} \}. \qedhere
\end{align*}
\end{proof}

We now turn the variational problem for the right tail rate functions into a variational problem involving Legendre-Fenchel transforms.

\begin{lemma}\label{lem:lfvar}
For any $\theta > 0$ let $\xi \in (0,\theta)$. Then $J_{s,t}^*(\xi)$ satisfies the variational problem
\begin{align*}
0 = \max\bigg\{&\sup_{0 \leq a < t}\left\{  (t-a)\left(\frac{1}{2} \xi^2 - \theta \xi\right)  - s \log \frac{\Gamma(\theta - \xi )}{\Gamma(\theta)}  + J_{s,t-a}^*(\xi)\right\},  \\
&\sup_{0 \leq a < s}\left\{ t\left(\frac{1}{2} \xi^2 - \theta \xi\right)  - (s-a) \log \frac{\Gamma(\theta- \xi)}{\Gamma(\theta)} + (J_{s - a, t})^*(\xi)\right\} \bigg\}.
\end{align*}
\end{lemma}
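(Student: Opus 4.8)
The plan is to obtain the identity by taking Legendre--Fenchel transforms in the variational equation (\ref{Uvarprob}) for $U_s^\theta$ and then inserting the explicit transforms of the ingredient rate functions. Recall from Corollary \ref{cor:infcon} that (\ref{Uvarprob}) says $U_s^\theta$ equals $\min\{\inf_{0\le a<t} R_{t-a}^\theta\square J_{s,t-a},\ \inf_{0\le a<s} R_t^\theta\square U_a^\theta\square J_{s-a,t}\}$. The two facts I would use are completely elementary and hold with no convexity or semicontinuity hypotheses (see \cite{Rock}): for any family of functions, $(\inf_\iota f_\iota)^* = \sup_\iota f_\iota^*$, which is nothing but an interchange of suprema and applies both to the two-term minimum and to the infima over $a$; and $(f\square g)^* = f^* + g^*$, hence also $(f\square g\square h)^* = f^* + g^* + h^*$. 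Applying these termwise to (\ref{Uvarprob}) yields, for all $\xi\in\mathbb{R}$,
\[
(U_s^\theta)^*(\xi) = \max\Big\{\sup_{0\le a<t}\big((R_{t-a}^\theta)^*(\xi) + J_{s,t-a}^*(\xi)\big),\ \sup_{0\le a<s}\big((R_t^\theta)^*(\xi) + (U_a^\theta)^*(\xi) + (J_{s-a,t})^*(\xi)\big)\Big\}.
\]

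I would then restrict to $\xi\in(0,\theta)$, which is precisely the range on which $(U_s^\theta)^*$ and $(U_a^\theta)^*$ are finite, and substitute the Legendre--Fenchel transforms of $R$ and $U$ recorded above: $(R_{t-a}^\theta)^*(\xi) = (t-a)(\tfrac12\xi^2 - \theta\xi)$, $(R_t^\theta)^*(\xi) = t(\tfrac12\xi^2 - \theta\xi)$, $(U_a^\theta)^*(\xi) = a\log\frac{\Gamma(\theta-\xi)}{\Gamma(\theta)}$, together with $(U_s^\theta)^*(\xi) = s\log\frac{\Gamma(\theta-\xi)}{\Gamma(\theta)}$ on the left. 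Subtracting the finite constant $s\log\frac{\Gamma(\theta-\xi)}{\Gamma(\theta)}$ from both sides and absorbing it into each supremum over $a$ — using $a\log\frac{\Gamma(\theta-\xi)}{\Gamma(\theta)} - s\log\frac{\Gamma(\theta-\xi)}{\Gamma(\theta)} = -(s-a)\log\frac{\Gamma(\theta-\xi)}{\Gamma(\theta)}$ in the second term — produces exactly the asserted equation.

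I do not expect a real obstacle: the argument uses only the \emph{easy} direction of Fenchel duality (the transform of an infimum, resp. of an infimal convolution, is a supremum, resp. a sum of transforms), which requires no regularity of the rate functions, plus the already-computed Cram\'er-type formula for $(U_s^\theta)^*$. The minor points deserving a word of care are that $\xi$ must be kept in $(0,\theta)$ so that the subtraction is legitimate and all quantities are finite, and that one need not verify attainment of the suprema over $a$ nor finiteness of the individual $J$-transforms $J_{s,t-a}^*(\xi)$: finiteness of the left side is automatically inherited by the right side, and the half-open ranges $a\in[0,t)$ and $a\in[0,s)$ carry over directly from (\ref{Uvarprob}).
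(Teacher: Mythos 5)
Your proposal is correct and follows essentially the same route as the paper: take Legendre--Fenchel transforms in \hyperref[Uvarprob]{(\ref*{Uvarprob})}, use that the transform of an infimum is the supremum of transforms and that $(f\square g)^*=f^*+g^*$ (the paper cites \cite[Theorem 16.4]{Rock} for the latter), then restrict to $\xi\in(0,\theta)$, substitute the explicit transforms, and subtract the finite quantity $(U_s^\theta)^*(\xi)$. Your observation that only the elementary direction of the duality is needed is a fair, minor streamlining of the same argument.
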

\begin{proof}
\hyperref[lem:varprob1]{Lemma \ref*{lem:varprob1}} and \hyperref[lem:varprob2]{Lemma \ref*{lem:varprob2}} imply \hyperref[Uvarprob]{(\ref*{Uvarprob})}. Infimal convolution is Legendre-Fenchel dual to addition for proper convex functions \cite[Theorem 16.4]{Rock} so we find
\begin{align*}
(U_s^\theta)^*(\xi) &= \sup_{x \in \mathbb{R}} \left\{\xi x - \min\left\{\inf_{0 \leq a < t}\left\{ R_{t - a}^\theta \square J_{s,t - a}(x)\right\}, \inf_{0 \leq a < s}\left\{ R_t^\theta \square U_a^\theta \square J_{s - a, t}(x)\right\}\right\}\right\} \\
&= \sup_{x \in \mathbb{R}}\left\{ \max\{\sup_{0 \leq a < t}\left\{ \xi x -R_{t - a}^\theta \square J_{s,t - a}(x)\right\}, \sup_{0 \leq a < s}\left\{ \xi x - R_t^\theta \square U_a^\theta \square J_{s - a, t}(x)\right\}\right\} \\
&= \max\left\{\sup_{0 \leq a < t} \left\{ (R_{t-a}^\theta)^*(\xi) + J_{s,t-a}^*(\xi)\right\}, \sup_{0 \leq a < s}\left\{ (R_t^\theta)^*(\xi) + (U_a^\theta)^*(\xi) + (J_{s - a, t})^*(\xi) \right\}\right\}.
\end{align*}
If $\xi \in (0, \theta)$, then $(U_s^\theta)^*(\xi) < \infty$, so we may subtract $(U_s^\theta)^*(\xi)$ from both sides. Substituting in the known Legendre-Fenchel transforms gives the result.
\end{proof}

\subsection{Solving the variational problem}
Next, we show that the variational problem in \hyperref[lem:lfvar]{Lemma \ref*{lem:lfvar}} identifies $J_{s,t}^*(\xi)$ for $\xi > 0$. To show the analogous result in \cite{GS13}, the authors followed the approach of rephrasing the variational problem as a Legendre-Fenchel transform in the space-time variables and appealing to convex analysis. We present an alternate method for computing $J_{s,t}^*(\xi)$ in the next proposition, which has the advantage of allowing us to avoid some of the technicalities in that argument. This direct approach is the main reason we are able to appeal to the G\"artner-Ellis theorem to prove the large deviation principle.
\begin{proposition}\label{prop:varsol}
Let $I \subseteq \mathbb{R}$ be open and connected and let $h,g:I \to \mathbb{R}$ be twice continuously differentiable functions with $h'(\theta) > 0$ and $g'(\theta) < 0$ for all $\theta \in I$. For $(x,y) \in (0,\infty)^2$, define
\begin{align*}
f_{x,y}(\theta) = x h(\theta) + y g(\theta)
\end{align*}
and suppose that $\frac{d^2}{d\theta^2}f_{x,y}(\theta) > 0$ for all $(x,y) \in (0,\infty)^2$ and that $f_{x,y}(\theta) \to \infty$ as $\theta \to \partial I$ (which may be a limit as $\theta \to \pm \infty$). If $\Lambda(x,y)$ is a continuous function on $(0,\infty)^2$ with the property that for all $(x,y) \in (0,\infty)^2$ and $\theta \in I$ the identity
\begin{align}\label{genvarprob}
0&=  \sup_{0 \leq a < x}\left\{ \Lambda(x-a, y)  - f_{x-a,y}(\theta) \right\}\vee \sup_{0 \leq b < y}\left\{ \Lambda(x, y - b) - f_{x,y-b}(\theta) \right\}
\end{align} 
holds, then
\begin{align*}
\Lambda(x,y) &= \min_{\theta \in I}\left\{ f_{x,y}(\theta)\right\}.
\end{align*}
\end{proposition}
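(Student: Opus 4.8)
The plan is to set $L(x,y) := \min_{\theta \in I} f_{x,y}(\theta)$ and prove $\Lambda = L$ on $(0,\infty)^2$. I would begin with structural facts about $f$. Since $\frac{d^2}{d\theta^2} f_{x,y}(\theta) = x h''(\theta) + y g''(\theta)$ is required to be positive for every $(x,y)$ in the open quadrant, positivity of this linear form forces $h''(\theta) \ge 0$ and $g''(\theta) \ge 0$ for each $\theta$, with not both vanishing. Combined with $h' > 0$ and $-g' > 0$, the computation $\rho'(\theta) = \big(h''(\theta)(-g'(\theta)) + h'(\theta) g''(\theta)\big)/g'(\theta)^2 > 0$ shows that $\rho(\theta) := h'(\theta)/(-g'(\theta))$ is a strictly increasing continuous function on $I$. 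Because $f_{x,y} \to \infty$ at $\partial I$, $f_{x,y}$ has a unique interior minimizer $\theta^*(x,y)$, and its first-order condition $x h'(\theta^*) = -y g'(\theta^*)$ says exactly $\rho(\theta^*(x,y)) = y/x$; hence $\rho : I \to (0,\infty)$ is a bijection, $\theta^*(x,y) = \rho^{-1}(y/x)$, and $\theta^*$ is strictly increasing in the ratio $y/x$. In particular $\theta^*(x,y) \to \sup I$ as $x \downarrow 0$ (with $y$ fixed), and $\theta^*(x,y) \to \inf I$ as $y \downarrow 0$.

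Next I would rewrite the hypothesis. Using $f_{x-a,y}(\theta) = f_{x,y}(\theta) - a h(\theta)$ and $f_{x,y-b}(\theta) = f_{x,y}(\theta) - b g(\theta)$, the variational identity \eqref{genvarprob} becomes: for all $(x,y) \in (0,\infty)^2$ and all $\theta \in I$,
\[
f_{x,y}(\theta) = \Big(\sup_{0 \le a < x} \{\Lambda(x-a,y) + a h(\theta)\}\Big) \vee \Big(\sup_{0 \le b < y} \{\Lambda(x,y-b) + b g(\theta)\}\Big).
\]
Taking $a = 0$ (or $b = 0$) in the ``$\le$'' half gives $\Lambda(x,y) \le f_{x,y}(\theta)$ for every $\theta$, hence $\Lambda \le L$ everywhere. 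I would also record that $L(x',y') \le f_{x',y'}(\theta) = x'h(\theta) + y'g(\theta)$ for each fixed $\theta$, so $\limsup_{x' \downarrow 0} L(x',y) \le y \inf_{\theta \in I} g(\theta)$ and, symmetrically, $\limsup_{y' \downarrow 0} L(x,y') \le x \inf_{\theta \in I} h(\theta)$.

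The core is a contradiction argument: suppose $\Lambda(x,y) < L(x,y)$ for some $(x,y)$, and apply the displayed identity at $\theta_0 := \theta^*(x,y)$, so that $f_{x,y}(\theta_0) = L(x,y)$ and one of the two suprema equals $L(x,y)$; say the first (the second case is handled identically with $h,g$, $x,y$, and the two endpoints of $I$ interchanged). I claim every term of that supremum is strictly less than $L(x,y)$: for $a = 0$ this is the standing assumption, and for $a \in (0,x)$ we have $\Lambda(x-a,y) \le L(x-a,y) < f_{x-a,y}(\theta_0)$, the strict inequality holding because the unique minimizer of $f_{x-a,y}$ is $\rho^{-1}(y/(x-a)) > \theta_0$, while $f_{x-a,y}(\theta_0) + a h(\theta_0) = f_{x,y}(\theta_0) = L(x,y)$. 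Thus the supremum equals $L(x,y)$ but is attained nowhere on $[0,x)$; since $\Lambda$ is continuous, any maximizing sequence $a_k$ must converge to $x$, whence $x - a_k \downarrow 0$ and $\Lambda(x-a_k,y) \to L(x,y) - x h(\theta_0) = y g(\theta_0)$. But $\Lambda(x-a_k,y) \le L(x-a_k,y)$ forces $y g(\theta_0) \le \limsup_k L(x-a_k,y) \le y \inf_{\theta \in I} g(\theta)$, which is impossible because $g$ is strictly decreasing and $\theta_0$ is an interior point of $I$. This contradiction proves $\Lambda(x,y) = L(x,y)$.

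The step requiring the most care is the strict monotonicity of $\theta^*(x,y) = \rho^{-1}(y/x)$ in $y/x$: this is precisely what puts every term of the relevant supremum strictly below $L(x,y)$ and hence drives the maximizing sequence to the boundary of the parameter interval, where the $\limsup$ bound on $L$ delivers the contradiction. Establishing it in turn depends on extracting $h'' \ge 0$ and $g'' \ge 0$ from the hypothesis that $f_{x,y}$ is convex for \emph{every} $(x,y)$ in the open quadrant, rather than at a single point; this is the one place the uniform form of the convexity assumption is used.
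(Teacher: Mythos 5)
Your argument is correct, and its skeleton matches the paper's: evaluate the identity at the minimizer $\theta_0=\theta^*(x,y)$, use strict monotonicity of the minimizer in the ratio $y/x$ to see that every interior term of the relevant supremum is strictly below its target value, force any maximizing sequence to the boundary, and then exclude the boundary. The differences are in how two steps are executed. First, you obtain monotonicity of $\theta^*$ by showing $\rho=h'/(-g')$ is strictly increasing (after extracting $h''\ge 0$, $g''\ge 0$ from the uniform convexity hypothesis), whereas the paper differentiates the first-order condition via the implicit function theorem; these are equivalent, and your observation that the hypothesis for \emph{all} $(x,y)$ yields the separate sign conditions is the right justification. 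Second, and more substantively, the paper's crux is the pair of bounds \hyperref[suffbd1]{(\ref*{suffbd1})}--\hyperref[suffbd2]{(\ref*{suffbd2})}, proved by an envelope-type computation showing $a\mapsto f_{x-a,y}(\theta^*_{x-a,y})-f_{x-a,y}(\theta^*_{x,y})$ is decreasing; you replace this with the elementary estimate $\limsup_{x'\downarrow 0}\min_\theta f_{x',y}(\theta)\le y\inf_{\theta\in I} g(\theta)$ (from $L(x',y)\le x'h(\theta)+y g(\theta)$ at each fixed $\theta$), which is strictly below $y\,g(\theta_0)$ because $g$ is strictly decreasing and $\theta_0$ is interior (and remains a valid contradiction even when the infimum is $-\infty$, as happens in the application). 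This buys a shorter, derivative-free exclusion of the boundary case at the cost of nothing essential; the paper's version gives the slightly stronger quantitative fact that the defect is monotone in $a$. Your contradiction framing versus the paper's direct identification of the limit point $a\to 0$ or $b\to 0$ is an immaterial difference.
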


\begin{proof}
Fix $(x,y) \in (0,\infty)^2$ and call  $\nu = \frac{y}{x}$. Under these hypotheses, there exists a unique $\theta_{x,y}^*  = \arg \min_{\theta \in I} f_{x,y}(\theta) = \theta_{1,\nu}^ *$. Identity \hyperref[genvarprob]{(\ref*{genvarprob})} implies that for all $a \in [0,x)$ and $b \in [0,y)$ we have 
\begin{align*}
\Lambda(x-a,y) \leq f_{x-a,y}(\theta_{x-a,y}^*), \qquad \Lambda(x,y-b) \leq f_{x,y-b}(\theta_{x,y-b}^*),
\end{align*}
and therefore for any $\theta \in I$, $a \in [0,x)$ and $b \in [0,y)$,
\begin{align}
\Lambda(x-a,y) - f_{x-a,y}(\theta) &\leq f_{x-a,y}(\theta_{x-a,y}^*) - f_{x-a,y}(\theta), \label{varub1} \\\Lambda(x,y-b) - f_{x,y-b}(\theta) &\leq f_{x,y-b}(\theta_{x,y-b}^*) - f_{x, y-b}(\theta).\label{varub2}
\end{align}
Uniqueness of minimizers implies that $f_{x-a,y}(\theta_{x-a,y}^*) - f_{x-a,y}(\theta) < 0$ unless $\theta = \theta_{x-a,y}^*$ and similarly $f_{x,y-b}(\theta_{x,y-b}^*) - f_{x, y-b}(\theta) < 0$ unless $\theta = \theta_{x,y-b}^*$. Notice that $\theta_{1,\nu}^*$ solves
\begin{align}\label{eqFOC}
0 &= h'(\theta_{1,\nu}^*) + \nu g'(\theta_{1,\nu}^*).
\end{align}
By the implicit function theorem, we may differentiate the previous expression with respect to $\nu$ to obtain
\begin{align}\label{optinc}
\frac{d \theta_{1,\nu}^*}{d\nu} &= - \frac{g'(\theta_{1,\nu}^*)}{h''(\theta_{1,\nu}^*) + \nu g''(\theta_{1,\nu}^*)} > 0.
\end{align}

Now, set $\theta = \theta_{x,y}^*$ in \hyperref[genvarprob]{(\ref*{genvarprob})}. Equality \hyperref[{optinc}]{(\ref*{optinc})} implies that for $a \in(0,x)$ and $b \in (0,y)$, $\theta_{(x,y-b)}^*  < \theta_{(x,y)}^* < \theta_{(x-a,y)}^*$. Then  \hyperref[varub1]{(\ref*{varub1})} and \hyperref[varub2]{(\ref*{varub2})} give us the inequalities
\begin{align}
\Lambda(x-a,y) - f_{x-a,y}(\theta_{x,y}^*) &\leq f_{x-a,y}(\theta_{x-a,y}^*) - f_{x-a,y}(\theta_{x,y}^*) < 0, \label{varub3} \\
\Lambda(x,y-b) - f_{x,y-b}(\theta_{x,y}^*) &\leq f_{x,y-b}(\theta_{x,y-b}^*) - f_{x, y-b}(\theta_{x,y}^*) < 0. \label{varub4}
\end{align}
Notice that \hyperref[genvarprob]{(\ref*{genvarprob})} implies either there exists $a_n \to a \in [0,x]$ or $b_n \to b \in [0,y]$ so that one of the following hold: 
\begin{align*}
\Lambda(x-a_n,y) - f_{x-a_n,y}(\theta_{x,y}^*) \to 0, \qquad \Lambda(x,y-b_n) - f_{x,y-b_n}(\theta_{x,y}^*) \to 0.
\end{align*}
Our goal is to show that the only possible limits are $a_n \to 0$ or $b_n \to 0$, from which the result follows from continuity. Continuity and inequalities \hyperref[varub3]{(\ref*{varub3})} and \hyperref[varub4]{(\ref*{varub4})} rule out the possibilities $a \in (0,x)$ and $b \in (0,y)$ respectively. It therefore suffices to show that
\begin{align}
&\limsup_{a \to x^-} f_{x-a,y}(\theta_{x-a,y}^*) - f_{x-a,y}(\theta_{x,y}^*) <0, \label{suffbd1} \\
&\limsup_{b \to y^-} f_{x,y-b}(\theta_{x,y-b}^*) - f_{x, y-b}(\theta_{x,y}^*) < 0. \label{suffbd2}
\end{align}
We will only write out the proof of \hyperref[suffbd1]{(\ref*{suffbd1})}, since the proof of \hyperref[suffbd2]{(\ref*{suffbd2})} is similar. For any fixed $a \in (0,x)$, we have
\begin{align*}
f_{x-a,y}(\theta_{x-a,y}^*) - f_{x-a,y}(\theta_{x,y}^*) < 0.
\end{align*}
It suffices to show that the previous expression is decreasing in $a$.  Differentiating the previous expression and using \hyperref[eqFOC]{(\ref*{eqFOC})} and the fact that $\theta_{(x,y)}^* < \theta_{(x-a,y)}^*$, we find
\begin{align*}
&\frac{d}{da}\left( (x-a) h(\theta_{(x-a,y)}^*) + y g(\theta_{(x-a,y)}^*) - \left[(x-a) h(\theta_{(x,y)}^*) + y g(\theta_{(x,y)}^*)\right]\right) \\
 &\hskip50pt = h(\theta_{(x,y)}^*) - h(\theta_{(x-a,y)}^*) < 0. \qedhere
\end{align*}
\end{proof}
\begin{corollary}\label{cor:lfform}
For all $\xi > 0$,
\begin{align*}
J_{s,t}^*(\xi) &= \min_{\theta > \xi}\left\{ t\left(- \frac{\xi^2}{2} + \theta \xi\right) + s \log \frac{\Gamma(\theta - \xi)}{\Gamma(\theta)}\right\} \\
&= \min_{\mu > 0}\left\{ t\left(\frac{\xi^2}{2} + \xi \mu \right) - s \log \frac{\Gamma(\mu + \xi)}{\Gamma(\mu)} \right\}.
\end{align*}
\end{corollary}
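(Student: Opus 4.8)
The plan is to derive the formula from Proposition~\ref{prop:varsol}, fed by the variational identity of Lemma~\ref{lem:lfvar}. Fix $\xi > 0$ and take $I = (\xi,\infty)$, $h(\theta) = \xi\theta - \tfrac{\xi^2}{2}$, and $g(\theta) = \log\Gamma(\theta-\xi) - \log\Gamma(\theta)$, so that $f_{x,y}(\theta) = x h(\theta) + y g(\theta)$ reproduces the penalties appearing in Lemma~\ref{lem:lfvar} under the identification $x = t$, $y = s$, $\Lambda(x,y) = J_{s,t}^*(\xi)$ (the first coordinate $x = t$ being the one that varies in the first supremum of Lemma~\ref{lem:lfvar}). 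Since Lemma~\ref{lem:lfvar} holds for every $\theta > \xi$ and every $(s,t) \in (0,\infty)^2$, and since $\min\{\sup,\sup\} = \sup \vee \sup$, its conclusion is exactly hypothesis~\eqref{genvarprob} of Proposition~\ref{prop:varsol} for all $\theta \in I$ and all $(x,y) \in (0,\infty)^2$. Granting the remaining hypotheses, Proposition~\ref{prop:varsol} yields $J_{s,t}^*(\xi) = \min_{\theta > \xi}\{t(-\tfrac{\xi^2}{2} + \theta\xi) + s\log\tfrac{\Gamma(\theta-\xi)}{\Gamma(\theta)}\}$, which is the first displayed formula; the substitution $\mu = \theta - \xi$ (sending $\theta > \xi$ to $\mu > 0$, turning $-\tfrac{\xi^2}{2}+\theta\xi$ into $\tfrac{\xi^2}{2}+\mu\xi$, and turning $\log\tfrac{\Gamma(\theta-\xi)}{\Gamma(\theta)}$ into $-\log\tfrac{\Gamma(\mu+\xi)}{\Gamma(\mu)}$) then produces the second.

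Checking the structural hypotheses on $h$ and $g$ is routine polygamma bookkeeping: $h'(\theta) = \xi > 0$; $g'(\theta) = \Psi_0(\theta-\xi) - \Psi_0(\theta) < 0$ because $\Psi_0$ is strictly increasing on $(0,\infty)$; and $\tfrac{d^2}{d\theta^2} f_{x,y}(\theta) = y\bigl(\Psi_1(\theta-\xi) - \Psi_1(\theta)\bigr) > 0$ for $y > 0$ because $\Psi_1$ is strictly decreasing on $(0,\infty)$. For coercivity of $f_{x,y}$ at $\partial I$: as $\theta \downarrow \xi$ the term $\log\Gamma(\theta-\xi)$ diverges to $+\infty$ (simple pole of $\Gamma$ at $0$) while everything else stays bounded; as $\theta \to \infty$ the linear term $x\xi\theta$ dominates, since $g'(\theta) = \Psi_0(\theta-\xi) - \Psi_0(\theta) = O(1/\theta)$ forces $g(\theta) = O(\log\theta)$. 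Strict convexity together with this coercivity also shows the minimizer over $\theta \in (\xi,\infty)$ is unique and attained, which is the fact referenced in the Remark following Theorem~\ref{thm:LDP}.

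The one delicate point is the continuity of $\Lambda(t,s) = J_{s,t}^*(\xi)$ on $(0,\infty)^2$ demanded by Proposition~\ref{prop:varsol}. Lower semicontinuity is immediate, since $J_{s,t}^*(\xi) = \sup_r\{r\xi - J_{s,t}(r)\}$ is a supremum of maps that are continuous in $(s,t)$ by Theorem~\ref{thm:exist}. For upper semicontinuity it suffices that the near-maximizing values of $r$ stay bounded as $(s,t)$ ranges over a compact subset of $(0,\infty)^2$, and this follows from the super-linear growth of $J_{s,t}(r)$ in $r$, locally uniformly in $(s,t)$. That growth is obtained from the crude deterministic bound $\log Z_{1,\lfloor ns\rfloor}(0,nt) \le \log|A_{\lfloor ns\rfloor, nt}| + (\text{Brownian last-passage time on }[0,nt])$ together with the exponential-scale large deviations for Brownian directed percolation --- equivalently for the top eigenvalue of a GUE matrix, as collected in the appendix --- yielding a lower bound on $J_{s,t}(r)$ that grows super-linearly in $r$ with constants locally bounded in $(s,t)$; alternatively one can lower bound $J_{s,t}(r)$ by $kr - \Lambda_{s,t}(k)$ using the known integer-moment asymptotics of \cite{BC14b} and let $k \to \infty$. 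Given this, a convergent subsequence of near-maximizers together with joint continuity of $J$ yields $\limsup_n J_{s_n,t_n}^*(\xi) \le J_{s,t}^*(\xi)$. With continuity in hand, Proposition~\ref{prop:varsol} applies and, after the change of variables, finishes the proof. I expect this continuity of $J_{s,t}^*(\xi)$ to be the only real obstacle; the rest is bookkeeping plus standard properties of $\Psi_0$ and $\Psi_1$ and a direct appeal to the results already assembled above.
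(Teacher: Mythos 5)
Your proposal is correct and follows the paper's own strategy: it feeds the identity of Lemma~\ref{lem:lfvar} into Proposition~\ref{prop:varsol} with exactly the same identification ($I=(\xi,\infty)$, $h(\theta)=\xi\theta-\tfrac{\xi^2}{2}$ paired with the time variable, $g(\theta)=\log\Gamma(\theta-\xi)-\log\Gamma(\theta)$ paired with the space variable), verifies $h'>0$, $g'<0$, strict convexity and coercivity of $f_{x,y}$ by the same polygamma facts and the same asymptotics of $\log\Gamma$, and finishes with the substitution $\mu=\theta-\xi$. The only place you diverge is the verification that $(s,t)\mapsto J_{s,t}^*(\xi)$ is finite and continuous on $(0,\infty)^2$, which Proposition~\ref{prop:varsol} requires: the paper gets finiteness directly from the identity in Lemma~\ref{lem:lfvar} and continuity from convexity in $(s,t)$ (Lemma~\ref{lem:lfreg}) together with \cite[Theorem 10.1]{Rock}, whereas you give a direct lower/upper semicontinuity argument, using that $J_{s,t}^*(\xi)$ is a supremum of functions jointly continuous by Theorem~\ref{thm:exist} for the lower bound, and a compactness-of-near-maximizers argument for the upper bound, with the needed locally uniform superlinear growth of $r\mapsto J_{s,t}(r)$ supplied by the GUE estimate of Lemma~\ref{lem:JGUE} (or by the moment bound of Lemma~\ref{momentbound} via Chebyshev). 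Both routes are sound; the paper's is shorter because Lemma~\ref{lem:lfreg} is already proved in the appendix for this purpose, while yours avoids the convexity lemma entirely at the cost of re-using the quadratic lower tail bounds, and has the small side benefit of making the finiteness of $J_{s,t}^*(\xi)$ explicit from the growth estimate rather than from the variational identity.
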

\begin{proof}
It follows from the variational representation in \hyperref[lem:lfvar]{Lemma \ref*{lem:lfvar}} that $J_{s,t}^*(\xi)$ is not infinite for any choice of the parameters $\xi,s,t>0$. It then follows from \hyperref[lem:lfreg]{Lemma \ref*{lem:lfreg}} and \cite[Theorem 10.1]{Rock} that $J_{s,t}^*(\xi)$ is continuous in $(s,t) \in (0,\infty)^2$.

Fix $\xi$ and set $I = \{ \theta : \theta > \xi\}$. For $\theta \in I$ and $s,t \in(0,\infty)$, define 
\begin{align*}
&h(\theta) = - \frac{\xi^2}{2} + \theta \xi,  &g(\theta) = \log \frac{\Gamma(\theta - \xi)}{\Gamma(\theta)}, \\ 
&f_{s,t}(\theta) = sh(\theta) + tg(\theta), &\Lambda(s,t) = J_{s,t}^*(\xi).
\end{align*}
\hyperref[lem:lfvar]{Lemma \ref*{lem:lfvar}} shows that with these definitions $J_{s,t}^*(\xi)$ solves the variational problem in  \hyperref[prop:varsol]{Proposition \ref*{prop:varsol}}. Because $\Psi_1(x) > 0$ and $\Psi_2(x) < 0$, we see that for $\theta \in I$
\begin{align*}
&g'(\theta) = \Psi_0(\theta - \xi) - \Psi_0(\theta) < 0, &g''(\theta) = \Psi_1(\theta - \xi) - \Psi_1(\theta) > 0.
\end{align*}
It then follows that $\frac{d^2}{d\theta^2}f_{s,t}(\theta) > 0$. Moreover, since $\log \frac{\Gamma(\theta - \xi)}{\Gamma(\theta)}$ grows like $-\xi \log(\theta)$ at infinity and $-\log(\theta - \xi)$ at $\xi$, $f_{s,t}(\theta)$ also tends to infinity at the boundary of $I$ and the result follows.

The second equality is the substitution $\mu = \theta - \xi$.
\end{proof}

\section{Moment Lyapunov exponents and the LDP}
The next result would be Varadhan's theorem if $J_{s,t}(x)$ were a full rate function, rather than a right tail rate function. The proof is somewhat long and essentially the same as the proof of Varadhan's theorem, so we omit it. Details of a similar argument for the stationary log-gamma polymer can be found in \cite[Lemma 5.1]{GS13}. The exponential moment bound needed for the proof follows from \hyperref[momentbound]{Lemma \ref*{momentbound}}.
\begin{lemma}\label{lem:MLEpos}
For $\xi > 0$,
\begin{align*}
J_{s,t}^*(\xi) &= \lim_{n \to \infty} \frac{1}{n} \log E \left[e^{\xi \log Z_{1, \lfloor ns \rfloor}(0,nt)} \right]
\end{align*}
and in particular the limit exists.
\end{lemma}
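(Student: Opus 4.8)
The plan is to show that for $\xi > 0$ the limiting logarithmic moment generating function $\lim_{n\to\infty} \frac1n \log E[e^{\xi \log Z_{1,\lfloor ns\rfloor}(0,nt)}]$ exists and equals the Legendre–Fenchel transform $J_{s,t}^*(\xi)$ of the right tail rate function, by an argument that is structurally identical to the proof of Varadhan's integral lemma applied to the ``one-sided'' large deviation principle carried by $n^{-1}\log Z_{1,\lfloor ns\rfloor}(0,nt)$. Write $W_n = n^{-1}\log Z_{1,\lfloor ns\rfloor}(0,nt)$, so that we must analyze $n^{-1}\log E[e^{n\xi W_n}]$.

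First I would establish the \emph{lower bound} $\liminf_{n\to\infty}\frac1n \log E[e^{n\xi W_n}] \geq J_{s,t}^*(\xi)$. This is the easy half: for any $x \geq \rho(s,t)$ we have $E[e^{n\xi W_n}] \geq e^{n\xi x} P(W_n \geq x)$, so $\frac1n\log E[e^{n\xi W_n}] \geq \xi x + \frac1n \log P(W_n \geq x)$; sending $n\to\infty$ and using Theorem~\ref{thm:exist} (which gives $\lim_n -\frac1n\log P(W_n \geq x) = J_{s,t}(x)$) yields $\xi x - J_{s,t}(x)$, and optimizing over $x$ gives $\sup_x\{\xi x - J_{s,t}(x)\}$. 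Since $J_{s,t}(x) = 0$ for $x \leq \rho(s,t)$ and $J_{s,t}$ is the right tail rate function, and since $\xi > 0$, the supremum over $x \in \mathbb{R}$ of $\xi x - J_{s,t}(x)$ coincides with $J_{s,t}^*(\xi)$ (the left branch $x < \rho$ contributes $\xi x$ which is dominated by $x = \rho$). So the lower bound is $J_{s,t}^*(\xi)$.

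The \emph{upper bound} $\limsup_{n\to\infty}\frac1n\log E[e^{n\xi W_n}] \leq J_{s,t}^*(\xi)$ is the substantive part and is where the exponential moment bound enters. The strategy is the standard slicing argument: fix $\delta > 0$ and a large $L$, partition $[\rho(s,t) - 1, L]$ into intervals of length $\delta$, bound $E[e^{n\xi W_n}]$ by splitting the expectation over the events $\{W_n < \rho(s,t) - 1\}$, $\{W_n \in [\rho(s,t)-1, L]\}$ sliced into the $\delta$-intervals, and $\{W_n \geq L\}$. On each slice $[x_j, x_{j+1}]$ one has $E[e^{n\xi W_n} 1_{W_n \in [x_j, x_{j+1}]}] \leq e^{n\xi x_{j+1}} P(W_n \geq x_j)$, which is at most $e^{n(\xi x_{j+1} - J_{s,t}(x_j) + o(1))} \leq e^{n(\xi x_j - J_{s,t}(x_j) + \xi\delta + o(1))} \leq e^{n(J_{s,t}^*(\xi) + \xi\delta + o(1))}$. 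The lower tail event $\{W_n < \rho(s,t)-1\}$ contributes negligibly because $e^{n\xi W_n}$ is then at most $e^{n\xi(\rho - 1)}$, which is below $e^{nJ_{s,t}^*(\xi)}$ for $\xi > 0$ (here one uses that $J_{s,t}^*(\xi) \geq \xi\rho(s,t)$, immediate from $J_{s,t}^*(\xi) = \sup_x\{\xi x - J_{s,t}(x)\} \geq \xi\rho - J_{s,t}(\rho) = \xi\rho$). The far upper tail $\{W_n \geq L\}$ is the one delicate point: one must choose $L$ large enough that this piece is negligible, and for this I would invoke Lemma~\ref{momentbound} — the exponential moment bound $E[e^{\xi' \log Z_{1,\lfloor ns\rfloor}(0,nt)}] \leq e^{O(n)}$ for some $\xi' > \xi$ — together with Hölder's inequality (or a direct Markov estimate) to get $E[e^{n\xi W_n}1_{W_n \geq L}] \leq E[e^{n\xi' W_n}]^{\xi/\xi'} P(W_n \geq L)^{1 - \xi/\xi'}$, and then use Theorem~\ref{thm:exist} to see $P(W_n \geq L)$ decays at rate $J_{s,t}(L) \to \infty$ as $L \to \infty$, so that the product is eventually below $e^{nJ_{s,t}^*(\xi)}$. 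Summing the finitely many slices costs only a polynomial-in-$n$ factor (absorbed into $o(n)$), so $\limsup_n \frac1n\log E[e^{n\xi W_n}] \leq J_{s,t}^*(\xi) + \xi\delta$; letting $\delta \downarrow 0$ finishes the proof. The main obstacle, as in any proof of Varadhan's lemma, is precisely this control of the far upper tail so that the truncation at level $L$ can be removed — here it is handled cleanly by the uniform-in-$n$ exponential moment bound of Lemma~\ref{momentbound}, which is why that lemma is cited in the statement.
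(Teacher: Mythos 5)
Your proposal is correct and is essentially the argument the paper has in mind: the paper omits this proof, describing it as ``essentially the same as the proof of Varadhan's theorem'' for the right tail rate function $J_{s,t}$ with the exponential moment bound of Lemma~\ref{momentbound} supplying the control needed to remove the truncation, and your slicing argument together with the H\"older/moment-bound treatment of the far upper tail (plus $J_{s,t}(L)\to\infty$, e.g.\ via Lemma~\ref{lem:JGUE}) is exactly that reconstruction. No gaps of substance; only cosmetic points remain (e.g.\ choosing $\xi'>\max(\xi,1)$ so Lemma~\ref{momentbound} applies, and noting the finitely many slices make the $o(1)$ errors uniform).
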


\begin{remark}
\hyperref[lem:MLEpos]{Lemma \ref*{lem:MLEpos}} shows that $J_{s,t}^*(\xi)$ is the $\xi$ moment Lyapunov exponent for the parabolic Anderson model associated to this polymer. With this in mind, the second formula in the statement of \hyperref[cor:lfform]{Corollary \ref*{cor:lfform}} above agrees with the conjecture in \cite[Appendix A.1]{BC14b}.

To see this, we first observe that the partition function we study differs slightly from the partition function $Z_\beta(t,n)$ studied in \cite{BC14b} (defined in equation (3) of that paper). As we saw was the case for $Z_n(\beta)$ in equation \hyperref[condexp]{(\ref*{condexp})}, up to normalization constants both $Z_{0,\lfloor ns \rfloor}(0,nt)$ and $Z_\beta(t,n)$ are conditional expectations of functionals of a Poisson path. The normalization constant for $Z_{0, \lfloor ns \rfloor}(0,nt)$ is given by the Lebesgue measure of the Weyl chamber $A_{\lfloor ns \rfloor+1, nt}$, while the normalization constant for $Z_\beta(t,n)$ is $P_{\pi(0)=0}\left(\pi(t) = n\right)$ where $\pi(\cdot)$ is again a rate one Poisson process. There is a further difference in that \cite{BC14b} adds a pinning potential of strength $\frac{\beta}{2}$ at the origin to the definition of $Z_\beta(t,n)$, which introduces a multiplicative factor of $e^{-\frac{\beta}{2}t}$. Combining these changes and restricting to the parameters studied in \cite[Appendix A.1]{BC14b}, we have the relation
\begin{align*}
e^{- \frac{n}{2}}\frac{P_{\pi(0) = 0}\left(\pi(n) = \lfloor n \nu \rfloor\right)}{|A_{\lfloor n \nu \rfloor + 1,n}|}Z_{0, \lfloor n \nu \rfloor}(0,n) &= Z_1(n, \lfloor n \nu\rfloor).
\end{align*}
Since $P_{\pi(0) = 0}\left(\pi(n) = \lfloor n \nu \rfloor\right)|A_{\lfloor n \nu \rfloor+1,n}|^{-1} = e^{-n}$, \hyperref[cor:lfform]{Corollary \ref*{cor:lfform}} and \hyperref[lem:MLEpos]{Lemma \ref*{lem:MLEpos}} then imply that for any $k > 0$,
\begin{align*}
\lim_{n \to \infty} \frac{1}{n} \log E\left[ Z_1(n, \lfloor n \nu \rfloor)^k \right] &= - \frac{3}{2}k + \min_{z > 0}\left\{ \frac{k^2}{2} + k z  - \nu \log \frac{\Gamma(z + k)}{\Gamma(z)} \right\} \\
&= \min_{z > 0}\left\{\frac{k(k-3)}{2} + k z- \nu \log \frac{\Gamma(z + k)}{\Gamma(z)} \right\},
\end{align*}
which is the extension of the moment Lyapunov exponent $H_k(z_k^0)$ conjectured in the middle of page 24 of \cite{BC14b}.
\end{remark}

Our next goal is to show that the left tail large deviations are not relevant at the scale we consider. This proof is based on the proof of \cite[Lemma 4.2]{GS13} which contains a small mistake; as currently phrased, the argument in that paper only works for $s,t \in \mathbb{Q}$. This problem can be fixed by altering the geometry of the proof, but doing this adds some technicalities which can be avoided in the model we study. We will follow an argument similar to the original proof for $s \in \mathbb{Q}$, then show that this implies what we need for all $s$.
\begin{proposition} \label{prop:lowerbd}
Fix $s,t >0$. For all $\epsilon > 0$
\begin{align*}
\liminf_{n \to \infty} - \frac{1}{n} \log P\left(\log Z_{1, \lfloor ns \rfloor}(0,nt) \leq n(\rho(s,t) - \epsilon) \right) & = \infty.
\end{align*}
\end{proposition}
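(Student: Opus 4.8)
The plan is to establish the result first for rational $s$ by a subadditivity-and-partitioning argument in the spirit of \cite[Lemma 4.2]{GS13}, and then bootstrap to all $s$ using monotonicity of the partition function in the space variable together with the continuity of $\rho$. The starting observation is that $\log Z_{1,\lfloor ns\rfloor}(0,nt)$ can be bounded below by a sum of many independent pieces: cutting the time interval $[0,nt]$ into $K$ equal blocks and forcing the path to make all but a bounded number of its jumps inside prescribed sub-blocks, supermultiplicativity \hyperref[gensubmult]{(\ref*{gensubmult})} gives, for $s$ a rational multiple of $K$,
\begin{align*}
\log Z_{1,\lfloor ns\rfloor}(0,nt) \;\geq\; \sum_{i=1}^K \log Z_{j_{i-1},j_i}\!\left(\tfrac{n t (i-1)}{K},\tfrac{n t i}{K}\right)
\end{align*}
with the $j_i$ chosen so that each factor is a partition function over a spatial stretch of size $\lfloor ns\rfloor/K$ and a time stretch $nt/K$, and the $K$ summands are independent. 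Each summand is distributionally $\log Z_{1,\lfloor (n/K) s\rfloor}(0,(n/K)t)$ up to the boundary rounding, so its left-tail probability is controlled by the \emph{same} function we are trying to bound, evaluated at $n/K$; this is the usual self-improving structure.

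To turn this into an infinite rate, I would use an a priori crude lower bound on a single block that already beats $\rho$ minus a fixed amount with a probability bounded below uniformly in $n$. Concretely, restricting each $Z_{1,m}$ factor further to the event that the Brownian increments along a single fixed path are not too negative and the Weyl-chamber volume is not too small gives $\log Z_{1,\lfloor ns\rfloor}(0,nt) \geq -Cn$ on an event of probability at least $c>0$ for some constants — or, more to the point, one can invoke the already-established law of large numbers (\hyperref[lem:fe]{Lemma \ref*{lem:fe}}, i.e. $n^{-1}\log Z_{1,\lfloor ns\rfloor}(0,nt)\to\rho(s,t)$ a.s.) so that $P(\log Z_{1,\lfloor ns\rfloor}(0,nt)\leq n(\rho(s,t)-\epsilon))\to 0$. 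Feeding this convergent-to-zero bound into the independent-product decomposition: if $p_n:=P(\log Z_{1,\lfloor ns\rfloor}(0,nt)\leq n(\rho(s,t)-\epsilon))$, then by independence and a union bound over which block is responsible for the deficit,
\begin{align*}
P\!\left(\sum_{i=1}^K \log Z^{(i)} \leq n(\rho(s,t)-\epsilon)\right) \;\leq\; \binom{K}{1}\, p_{n/K}^{\,K-\ell}\cdot(\text{bounded-block factor})
\end{align*}
after arranging that $K-\ell$ of the blocks must each individually fall below $(n/K)(\rho(s,t)-\epsilon/2)$; iterating (or taking $K$ large and $n$ along the subsequence $n=K^m$) drives $-\tfrac1n\log(\cdot)$ to $+\infty$ because each factor contributes an unbounded amount while $p_{n/K}\to 0$. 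Care is needed with the finitely many "leftover" jumps and with the rounding $\lfloor ns\rfloor$ versus $(1/K)\lfloor (Kn/K)s\rfloor$, which is exactly why the clean version requires $s\in\mathbb Q$; one picks $K$ so that $Ks\in\mathbb Z$ and the blocks split evenly up to an $O(1)$ error absorbed into the bounded-block factor.

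Finally, to remove the rationality restriction: for arbitrary $s>0$ pick rationals $s'<s$ with $\rho(s',t)>\rho(s,t)-\epsilon/2$, which is possible since $\rho(\cdot,t)$ is continuous (Lemma \ref{lem:fe}). Monotonicity of the partition function in the spatial extent — a path for the $\lfloor ns'\rfloor$-polymer embeds into the $\lfloor ns\rfloor$-polymer by freezing the extra coordinates, giving $Z_{1,\lfloor ns\rfloor}(0,nt)\geq (\text{positive factor})\, Z_{1,\lfloor ns'\rfloor}(0,nt)$, or more simply the event $\{\log Z_{1,\lfloor ns\rfloor}(0,nt)\leq n(\rho(s,t)-\epsilon)\}$ is contained in $\{\log Z_{1,\lfloor ns'\rfloor}(0,nt)\leq n(\rho(s',t)-\epsilon/2)\}$ up to an event of superexponentially small probability controlling the extra $\lfloor ns\rfloor-\lfloor ns'\rfloor$ coordinates — reduces the general-$s$ statement to the rational case already proved. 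I expect the main obstacle to be the bookkeeping in the bootstrap step: making the independent decomposition genuinely independent while keeping track of the $O(1)$ stray jumps and the floor-function mismatches, so that the "bounded-block factor" really is subexponential and does not interfere with sending the rate to infinity. Once that is set up, the rationality removal and the appeal to Lemma \ref{lem:fe} are routine.
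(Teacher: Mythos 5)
There is a genuine gap, and it is in the rational case, which is the heart of the matter. Your entire upper bound passes through the single serial decomposition $P\left(\log Z_{1,\lfloor ns\rfloor}(0,nt) \leq n(\rho(s,t)-\epsilon)\right) \leq P\left(\sum_{i=1}^K X_i \leq n(\rho(s,t)-\epsilon)\right)$, where the $X_i$ are i.i.d. block free energies at scale $\nu = n/K$. But the right-hand side is at least $\prod_{i=1}^K P\left(X_i \leq \nu(\rho(s,t)-\epsilon)\right) = p_\nu^{K}$, so the best exponent this route can ever produce is of order $K(-\log p_\nu)/n = (-\log p_\nu)/\nu$. For fixed $K$ the law of large numbers gives $p_\nu \to 0$ with no rate, so this quantity need not diverge (it can even vanish if $p_\nu$ decays subexponentially); and if you let $K$ grow or iterate along $n = K^m$, you need precisely $(-\log p_\nu)/\nu \to \infty$ at the smaller block scale, i.e.\ the statement you are trying to prove. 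The bootstrap recursion implicit in your union bound has the shape $-\log p_{K\nu} \geq cK(-\log p_\nu) - O(K)$ with $c \leq 1$ (the fraction of blocks forced to be deficient), so the per-unit rate is never improved by iteration, let alone driven to infinity; even a clean Cram\'er bound inside one chain (using the exponential moments of \hyperref[momentbound]{Lemma \ref*{momentbound}}) caps out at $e^{-c_1 n}$. In short, a single chain of independent blocks cannot detect superexponential decay, because the event that the block sum is small is itself only exponentially rare.

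The paper's proof supplies exactly the ingredient your sketch lacks: a \emph{parallel} structure. It fixes a mesoscopic scale $m$ with $\frac1m E\log Z_{1,\lfloor ms\rfloor}(0,mt) \geq \rho(s,t)-\epsilon$ and builds $\lfloor\sqrt n\rfloor + 1$ pairwise disjoint coarse-grained path families $D_k$, each contributing a product of roughly $n/m$ i.i.d.\ blocks; since $Z_{1,\lfloor ns\rfloor}(0,nt)$ dominates the \emph{sum} over $k$ of these contributions, the bad event forces every one of the $\lfloor\sqrt n\rfloor$ mutually independent block sums to fall below $n(\rho(s,t)-2\epsilon)$, and each such failure has probability at most $e^{-c_1 n}$ by Cram\'er, yielding $e^{-c_1 n^{3/2}}$; the leftover boundary pieces are controlled by the GUE estimates (\hyperref[lem:HGUE]{Lemma \ref*{lem:HGUE}}, \hyperref[lem:normtail]{Lemma \ref*{lem:normtail}}) and the reflection principle. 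Separately, your reduction from irrational to rational $s$ invokes monotonicity in the spatial extent ("freezing the extra coordinates"), which is not available in this model: the path must terminate on line $\lfloor ns\rfloor$ at time $nt$, so adding lines is not a monotone operation. The paper instead peels off a thin strip via supermultiplicativity \hyperref[gensubmult]{(\ref*{gensubmult})}, taking $\tilde s_k < s$ rational and $\tilde t_k < t$, and shows via \hyperref[lem:JGUE]{Lemma \ref*{lem:JGUE}} that the strip's left-tail rate blows up as $k \to \infty$; some version of that quantitative strip estimate would also be needed to complete your second step.
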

\begin{proof}

First we consider the case $s \in \mathbb{Q}$. There exists $M \in \mathbb{N}$ large enough that $M(s \wedge t) \geq 1$ and for all $m \geq M$ we have
\begin{align*}
\frac{1}{m} E \log Z_{1, \lfloor ms \rfloor}(0,mt) \geq \rho(s,t) - \epsilon.
\end{align*}
Fix $m \geq M$ so that $ms \in \mathbb{N}$. We will denote coordinates in $\mathbb{R}^{\lfloor ns \rfloor - 1}$ by $(u_1, \dots , u_{\lfloor ns \rfloor -1})$. For $a,b,s,t \in (0,\infty)$ and $n,k,l \in \mathbb{N}$, define a family of sets $A_{k,a}^{l,b} \subset \mathbb{R}^{\lfloor ns \rfloor - 1}$ by
\begin{align*}
A_{k,a}^{l,b} = &\{0 < u_1 < \dots < u_{k-1} < a < u_k < \dots < u_{k + l -1} < a + b < u_{k+l} < \dots < u_{\lfloor ns \rfloor - 1} < nt\}.
\end{align*}
For $j,k, \in \mathbb{Z}^+$, set
\begin{align*}
A_j^k &\equiv A_{j \lfloor ms \rfloor + 1, (j + k) mt}^{\lfloor ms \rfloor, mt}.
\end{align*}
For each $n$ sufficiently large that the expression below is greater than one, define
\begin{align*}
N &= \left \lfloor \frac{n}{m} - \lfloor \sqrt{n} \rfloor - 2 \right \rfloor,
\end{align*}
so that we have
\begin{align}
(n - 2m)t &\leq (N + \lfloor\sqrt{n}\rfloor + 1)mt \leq (n-m)t, \label{Ntbound}\\
\left(\lfloor \sqrt{n} \rfloor + 1\right)ms - 1&\leq \lfloor ns \rfloor - N \lfloor ms \rfloor \leq \left(\lfloor\sqrt{n}\rfloor + 2\right)ms. \label{Nsbound}
\end{align}
With this choice of $N$, for $0 \leq k \leq \lfloor \sqrt{n} \rfloor$ and $0 \leq j \leq N-1$, $A_j^k$ is nonempty. Then for $0 \leq k \leq \lfloor \sqrt{n} \rfloor$, define
\begin{align*}
D_k &= \cap_{j=0}^{N-1} A_j^k \cap \left\{u : 0 < u_1 < \dots < u_{(N+1)ms - 1} <  t\left(n - \frac{m}{2}\right) < u_{(N+1)ms} < \dots < u_{\lfloor ns \rfloor - 1} < nt \right\}.
\end{align*}
To simplify the formulas that follow, we introduce the notation $s_j = j ms$ and $t_j^k = (j+k)mt$.  In words, we can think of $D_k$ as the collection of paths from $0$ to $nt$ which traverse the bottom line until $t_0^k$, then for $0 \leq j \leq N-1$ move from $t_j^k$ to $t_{j+1}^k$ along the next $ms$ lines. The path then moves from $t_N^k$ to $t\left(n - \frac{m}{2}\right)$ along the next $ms$ lines and finally proceeds to $nt$ along the remaining lines. Observe that $\{D_k\}_{k=0}^{\lfloor \sqrt{n} \rfloor}$ is a pairwise disjoint, non-empty family of sets. With the convention $u_0 = 0$ and $u_{\lfloor ns \rfloor} = nt$, we have the bound
\begin{align*}
Z_{1, \lfloor ns \rfloor}(0,nt) &\geq \sum_{k=0}^{\lfloor \sqrt{n}\rfloor} \int_{D_k} e^{\sum_{i=1}^{\lfloor ns \rfloor} B_i(u_{i-1}, u_{i})} du_1 \dots u_{\lfloor ns \rfloor -1}.
\end{align*}
In the integral over $D_k$, for each $0 \leq j \leq N$ we add and subtract $B_{s_j}(t_j^k)$ in the exponent. Similarly, add and subtract $B_{s_{N+1}}\left(t\left(n - \frac{m}{2}\right)\right)$. The reason for this step is that this will make the product of integrals coming from the definition of $D_k$ into a product of partition functions, as when we showed supermultiplicativity of the partition function in \hyperref[submult]{(\ref*{submult})}. Introduce 
\begin{align*}
H_k^n &= \inf_{t_N^k = u_0 < u_1 < \dots < u_{ms - 1} < u_{ms} =  n\left(t - \frac{m}{2}\right)}\left\{ \sum_{i=0}^{ms-1}B_{s_N + i}(u_{i-1}, u_i)\right\}
\end{align*}
and observe that $H_0^n \leq B_{s_N}(t_N^0, t_N^k) + H_k^n$. Let $C> 0$ be a uniform lower bound in $n$ (recall that $m$ is fixed) on the Lebesgue measure of the Weyl chamber in the definition of $H_{\lfloor \sqrt{n} \rfloor}^n$. Such a bound exists by \hyperref[Ntbound]{(\ref*{Ntbound})}. Set $I_n = \max_{t_{N-1}^0 \leq u \leq t_{N-1}^{\lfloor \sqrt{n}\rfloor}}\{B_{s_N}(t_{N-1}^0,  u)\}$. We have the lower bound
\begin{align*}
Z_{1, \lfloor ns \rfloor}(0,nt) \geq C Z_{s_{N+1}, \lfloor ns \rfloor}\left(t\left(n - \frac{m}{2}\right), nt\right)e^{H_0^n - I_n}\left(\sum_{k=0}^{\lfloor \sqrt{n} \rfloor} e^{B_0(0, t_0^k)} \prod_{j=0}^{N-1}Z_{s_j,s_{j+1}}\left(t_j^k, t_{j+1}^k\right) \right).
\end{align*}
We therefore have the upper bound
\begin{align*}
&P\left(\log Z_{1, \lfloor ns \rfloor}(0,nt) \leq -n(\rho(s,t) - 6\epsilon) \right) \\
&\hspace{3pc} \leq P\left(\log Z_{(N+1)ms, \lfloor ns \rfloor}\left(t\left(n - \frac{m}{2}\right), nt\right) \leq -n \epsilon - \log C \right) \\
&\hspace{3pc}+P\left(\max_{0 \leq k \leq \lfloor \sqrt{n} \rfloor} \sum_{j=0}^{N-1}\log Z_{s_j + 1,s_{j+1}}\left(t_j^k, t_{j+1}^k\right) \leq -n(\rho(s,t) - 2\epsilon) \right) \\
&\hspace{3pc}+ P\left(H_0 \leq -n \epsilon \right) + P\left(\min_k B_0(t_0^k) \leq -n\epsilon \right) + P\left(I_n \geq n \epsilon \right).
\end{align*}
It follows from translation invariance, \hyperref[lem:normtail]{Lemma \ref*{lem:normtail}}, and \hyperref[Nsbound]{(\ref*{Nsbound})} that \begin{align*}
P\left(\log Z_{(N+1)ms, \lfloor ns \rfloor}\left(t\left(n - \frac{m}{2}\right), nt\right) \leq -n \epsilon - \log \frac{mt}{12} \right) = O\left(e^{-n^\frac{3}{2}}\right).
\end{align*} 
We have
\begin{align*}
&P\left(\max_{0 \leq k \leq \lfloor \sqrt{n} \rfloor} \left\{ \sum_{j=0}^{N-1}\log Z_{s_j + 1,s_{j+1}}\left(t_j^k, t_{j+1}^k\right) \right\} \leq -n(\rho(s,t) - 2\epsilon) \right) \\
&\hspace{3pc} = P\left(\sum_{j=0}^{N-1}\log Z_{s_j + 1,s_{j+1}}\left(t_j^1, t_{j+1}^1\right) \leq -n(\rho(s,t) - 2\epsilon) \right)^{\lfloor \sqrt{n}\rfloor} = O\left(e^{- c_1 n^{\frac{3}{2}}}\right)
\end{align*}
for some $c_1 > 0$. The first equality comes from the fact that the terms in the maximum are i.i.d. and the second comes from large deviation estimates for an i.i.d. sum once we recall that $N = \frac{n}{m} + o(n)$. 

Recall that by \hyperref[Ntbound]{(\ref*{Ntbound})}, $n\left(t - \frac{m}{2}\right) - t_N^0 = O(\sqrt{n})$. It follows from \hyperref[lem:HGUE]{Lemma \ref*{lem:HGUE}} that there exist $c_2, C_2 > 0$ so that
\begin{align*}
P\left(H_0 \leq -n \epsilon \right) &\leq C_2 e^{- c_2 n^{\frac{3}{2}}}.
\end{align*}
The remaining two terms can be controlled with the reflection principle and are $O\left(e^{-\frac{1}{2}n^{\frac{3}{2}}}\right)$.

Now let $s$ be irrational. For each $k$, fix $\tilde{s}_k < s$ rational with $e^{-k} < |\tilde{s}_k - s| < 2 e^{-k}$ and set $\tilde{t}_k = t -\frac{1}{k} $. Call $\alpha_k = s - \tilde{s}_k$ and $\beta_k = t - \tilde{t}_k = \frac{1}{k}$. Subadditivity gives
\begin{align*}
P\left(\log Z_{1, \lfloor ns \rfloor}(0,nt) \leq n(\rho(s,t) - \epsilon) \right) &\leq P\left(\log Z_{1, \lfloor n \tilde{s}_k \rfloor}(0,n\tilde{t}_k) \leq n\left(\rho(\tilde{s}_k,\tilde{t}_k) - \frac{\epsilon}{2}\right)  \right) \\
&+ P\left( \log Z_{\lfloor n \tilde{s}_k\rfloor, \lfloor ns \rfloor }(n \tilde{t}_k, nt) \leq n\left(\rho(s,t) - \rho(\tilde{s}_k, \tilde{t}_k) - \frac{\epsilon}{2}\right) \right).
\end{align*}

Since $\tilde{s}_k$ is rational, we have already shown that the first term is negligible. Take $k$ sufficiently large that $\rho(s,t) - \rho(\tilde{s}_k, \tilde{t}_k) - \frac{\epsilon}{2} < -\frac{\epsilon}{4}$. By \hyperref[lem:JGUE]{Lemma \ref*{lem:JGUE}}, we find
\begin{align*}
&\liminf_{n \to \infty} - \frac{1}{n} P\left(\log Z_{1, \lfloor ns \rfloor}(0,nt) \leq n(\rho(s,t) - \epsilon) \right) \geq \alpha_k J_{GUE}\left(\frac{\frac{\epsilon}{4} - \alpha_k \log \beta_k - \alpha_k + \alpha_k \log \alpha_k}{2\sqrt{\alpha_k \beta_k}} \right).
\end{align*}
Using formula \hyperref[JGUE]{(\ref*{JGUE})}, $J_{GUE}(r) = 4 \int_0^r \sqrt{x(x+2)}dx$, it is not hard to see that as $k \to \infty$, this lower bound tends to infinity.
\end{proof}

\begin{lemma}\label{lem:MLEneg}
Fix $s,t >0$ and $\xi < 0$. Then
\begin{align*}
\lim_{n \to \infty} \frac{1}{n} \log E\left[e^{\xi \log Z_{1, \lfloor ns \rfloor}(0,nt)}\right] &= \xi \rho(s,t).
\end{align*}
\end{lemma}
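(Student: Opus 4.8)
Since $e^{\xi \log Z} = Z^{\xi}$ and $\xi < 0$, large values of $e^{\xi\log Z_{1,\lfloor ns\rfloor}(0,nt)}$ correspond to atypically small values of the free energy. The statement is therefore the assertion that the left-tail large deviations, which decay super-exponentially by \hyperref[prop:lowerbd]{Proposition \ref*{prop:lowerbd}}, do not contribute on the exponential scale $n$. Write $W_n = n^{-1}\log Z_{1,\lfloor ns\rfloor}(0,nt)$, so that the claim is $n^{-1}\log E[e^{n\xi W_n}] \to \xi\rho(s,t)$. I would prove the two matching bounds separately.

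For the lower bound, fix $\delta > 0$ and restrict the expectation to the event $\{W_n \le \rho(s,t)+\delta\}$. Since $\xi < 0$, on this event $e^{n\xi W_n}\ge e^{n\xi(\rho(s,t)+\delta)}$, so $E[e^{n\xi W_n}]\ge e^{n\xi(\rho(s,t)+\delta)}\,P(W_n \le \rho(s,t)+\delta)$. By \hyperref[lem:fe]{Lemma \ref*{lem:fe}} we have $W_n\to\rho(s,t)$ almost surely, hence $P(W_n\le\rho(s,t)+\delta)\to1$, and taking $n^{-1}\log$ gives $\liminf_n n^{-1}\log E[e^{n\xi W_n}]\ge \xi(\rho(s,t)+\delta)$. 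Letting $\delta\downarrow0$ yields $\liminf_n n^{-1}\log E[e^{n\xi W_n}]\ge \xi\rho(s,t)$.

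For the upper bound, fix $\delta>0$ and split $E[e^{n\xi W_n}] = E[e^{n\xi W_n}\mathbf 1_{\{W_n>\rho(s,t)-\delta\}}]+E[e^{n\xi W_n}\mathbf 1_{\{W_n\le\rho(s,t)-\delta\}}]$. On the first event $e^{n\xi W_n}< e^{n\xi(\rho(s,t)-\delta)}$, so the first term is at most $e^{n\xi(\rho(s,t)-\delta)}$. For the second term I would apply Cauchy--Schwarz to obtain $E[e^{n\xi W_n}\mathbf 1_{\{W_n\le\rho(s,t)-\delta\}}]\le \big(E[e^{2n\xi W_n}]\big)^{1/2}\,P(W_n\le\rho(s,t)-\delta)^{1/2}$. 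The first factor equals $\big(E[Z_{1,\lfloor ns\rfloor}(0,nt)^{2\xi}]\big)^{1/2}$, which grows at most exponentially in $n$ by the moment bound of \hyperref[momentbound]{Lemma \ref*{momentbound}} applied with exponent $2\xi<0$ and $s,t$ fixed; say it is $\le e^{Cn}$ for some finite $C=C(\xi,s,t)$ and all large $n$. The second factor decays super-exponentially by \hyperref[prop:lowerbd]{Proposition \ref*{prop:lowerbd}}, so for every $K$ it is $\le e^{-Kn}$ eventually. Choosing $K$ large enough that $\tfrac12(C-K)<\xi(\rho(s,t)-\delta)$ makes the second term negligible relative to the first, so $E[e^{n\xi W_n}]\le e^{n\xi(\rho(s,t)-\delta)}(1+o(1))$ and hence $\limsup_n n^{-1}\log E[e^{n\xi W_n}]\le\xi(\rho(s,t)-\delta)$. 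Letting $\delta\downarrow0$ gives the matching upper bound, and combining the two bounds completes the proof.

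The only substantive point is the handling of the second term: on the set where the free energy is atypically small, $e^{n\xi W_n}$ is exponentially large, so one needs the exponential a priori control of negative moments from \hyperref[momentbound]{Lemma \ref*{momentbound}} to pit against the super-exponential left-tail probability from \hyperref[prop:lowerbd]{Proposition \ref*{prop:lowerbd}}; everything else is elementary. One could equally use H\"older with any exponent $p>1$ in place of Cauchy--Schwarz, since the left-tail probability beats any fixed exponential rate.
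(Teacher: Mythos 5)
Your proposal is correct and follows essentially the same route as the paper: the same truncation argument for the lower bound, and for the upper bound the same split at $\rho(s,t)-\delta$ with Cauchy--Schwarz pitting the exponential control of negative moments from Lemma \ref{momentbound} against the super-exponential left-tail decay of Proposition \ref{prop:lowerbd}. The one cosmetic caveat is that Lemma \ref{momentbound} is stated only for $|\xi|>1$, so for $\xi\in(-\tfrac12,0)$ the factor $E[e^{2n\xi W_n}]$ needs an extra Jensen step (as the paper does) or your suggested H\"older exponent taken large enough.
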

\begin{proof}
Fix $\epsilon > 0$ and small and recall that \hyperref[momentbound]{Lemma \ref*{momentbound}} and Jensen's inequality imply that for any $\xi < 0$, $\sup_n\left\{ \frac{1}{n} \log E\left[e^{\xi \log Z_{1, \lfloor ns \rfloor}(0,nt)}\right] \right\}  < \infty$. The lower bound is immediate from
\begin{align*}
\frac{1}{n} \log E\left[e^{\xi \log Z_{1, \lfloor ns \rfloor}(0,nt)}\right] &\geq \frac{1}{n} \log E\left[e^{\xi \log Z_{1, \lfloor ns \rfloor}(0,nt)}1_{\{\log Z_{1,\lfloor ns \rfloor}(0,nt) \leq n(\rho(s,t) + \epsilon)\}}\right] \\
&\geq \xi (\rho(s,t) + \epsilon) + \frac{1}{n} \log P(\log Z_{1,\lfloor ns \rfloor}(0,nt) \leq n(\rho(s,t) + \epsilon))
\end{align*}
once we recall that $P(\log Z_{1,\lfloor ns \rfloor}(0,nt) \leq n(\rho(s,t) + \epsilon)) \to 1$.

For the upper bound, we decompose the expectation as follows
\begin{align*}
E\left[e^{\xi \log Z_{1, \lfloor ns \rfloor}(0,nt)}\right] =&E\left[e^{\xi \log Z_{1, \lfloor ns \rfloor}(0,nt)}1_{\{\log Z_{1,\lfloor ns \rfloor}(0,nt) > n(\rho(s,t) - \epsilon)\}}\right]\\
&+ E\left[e^{\xi \log Z_{1, \lfloor ns \rfloor}(0,nt)}1_{\{\log Z_{1,\lfloor ns \rfloor}(0,nt) \leq n(\rho(s,t) - \epsilon)\}}\right].
\end{align*}
Recalling that $P\left( \log Z_{1,\lfloor ns \rfloor}(0,nt) > n(\rho(s,t) - \epsilon)\right) \to 1$, this leads to
\begin{align*}
&\limsup_{n \to \infty}\frac{1}{n} \log E\left[e^{\xi \log Z_{1, \lfloor ns \rfloor}(0,nt)}\right]  \\
&\hskip30pt \leq \max \Big \{ \xi (\rho(s,t) - \epsilon), \limsup_{n \to \infty} \frac{1}{n} \log E\left[e^{\xi \log Z_{1, \lfloor ns \rfloor}(0,nt)}1_{\{\log Z_{1,\lfloor ns \rfloor}(0,nt) \leq n(\rho(s,t) - \epsilon)\}}\right]  \Big \}.
\end{align*}
By Cauchy-Schwarz and \hyperref[prop:lowerbd]{Proposition \ref*{prop:lowerbd}}
\begin{align*}
&\limsup_{n \to \infty}\frac{1}{n} \log E\left[e^{\xi \log Z_{1, \lfloor ns \rfloor}(0,nt)}1_{\{\log Z_{1,\lfloor ns \rfloor}(0,nt) \leq n(\rho(s,t) - \epsilon)\}}\right]  \\
&\hskip30pt \leq \frac{1}{2} \sup_n \left\{ \frac{1}{n} \log E\left[e^{2 \xi \log Z_{1, \lfloor ns \rfloor}(0,nt)}\right] \right\} \\
&\hskip30pt + \limsup_{n \to \infty}\frac{1}{2n} \log P\left( \log Z_{1,\lfloor ns \rfloor}(0,nt) \leq n(\rho(s,t) - \epsilon)\right) = -\infty. \qedhere
\end{align*}
\end{proof}

Combining the previous results, we are led to the proof of \hyperref[thm:MLE]{Theorem \ref*{thm:MLE}}, from which we immediately deduce \hyperref[thm:LDP]{Theorem \ref*{thm:LDP}}.

\begin{proof}[Proof of Theorem \ref*{thm:MLE}]
Lemmas \hyperref[lem:MLEpos]{\ref*{lem:MLEpos}} and \hyperref[lem:MLEneg]{\ref*{lem:MLEneg}} give the limit for $\xi \neq 0$ and the limit for $\xi = 0$ is zero.

Note that $\Lambda_{s,t}(\xi)$ is differentiable for $\xi < 0$ the left derivative at zero is $\rho(s,t)$. For $\xi > 0$, there is a unique $\mu(\xi)$ solving
\begin{align}\label{Lambdaeq}
\Lambda_{s,t}(\xi) &= t\left(\frac{\xi^2}{2} + \xi \mu(\xi)\right) - s \log \frac{\Gamma(\mu(\xi) + \xi)}{\Gamma(\mu(\xi))}.
\end{align}
This $\mu(\xi)$ is given by the unique solution to
\begin{align}\label{musolves}
0 &= t \xi + s\left(\Psi_0(\mu(\xi)) - \Psi_0(\mu(\xi) + \xi)\right),
\end{align}
which can be rewritten as
\begin{align*}
\frac{1}{\xi}\left( \Psi_0(\mu(\xi) + \xi) - \Psi_0(\mu(\xi)) \right) &= \frac{t}{s}.
\end{align*}
By the mean value theorem, there exists $x \in [0,\xi)$ so that
\begin{align*}
\Psi_1^{-1}\left(\frac{t}{s}\right) - x &= \mu(\xi).
\end{align*}
Using this, we see that $\Lambda_{s,t}(\xi)$ is continuous at $\xi = 0$. The implicit function theorem implies that $\mu(\xi)$ is smooth for $\xi > 0$. Differentiating \hyperref[Lambdaeq]{(\ref*{Lambdaeq})} with respect to $\xi$ and applying \hyperref[musolves]{(\ref*{musolves})}, we have
\begin{align*}
\frac{d}{d\xi} \Lambda_{s,t}(\xi)&= t\left(\xi + \mu(\xi) \right) - s \Psi_0(\mu(\xi) + \xi).
\end{align*}
Substituting in for $\mu(\xi)$, appealing to continuity, and taking $\xi \downarrow 0$ gives
\begin{align*}
\lim_{\xi \downarrow 0} \frac{d}{d\xi} \Lambda_{s,t}(\xi) &= t \Psi_1^{-1}\left(\frac{t}{s}\right) - s \Psi_0\left(\Psi_1^{-1} \left( \frac{t}{s}\right) \right) \\
&= \rho(s,t)
\end{align*}
which implies differentiability at zero and hence at all $\xi$. 
\end{proof}
\begin{proof}[Proof of Theorem \ref*{thm:LDP}]
The large deviation principle holds by \hyperref[thm:MLE]{Theorem \ref*{thm:MLE}} and the G\"artner-Ellis theorem \cite[Theorem 2.3.6]{DemboZeitouni}.
\end{proof}

\begin{appendices}
\section{Right tail rate functions} \label{sec:existreg}
\subsection{Existence and structure of the right tail rate function} \label{subsec:existreg}
We now turn to the problem of showing the existence and regularity of the right tail rate function for the polymer free energy. Our main goal in this subsection will be to prove \hyperref[thm:exist]{Theorem \ref*{thm:exist}}. As is typical for right tail large deviations, existence and regularity follow from (almost) subadditivity arguments. Because the partition function degenerates for steps with no time component and we do not restrict attention to integer $s$, it is necessary to tilt time slightly in this argument. We break the proof of \hyperref[thm:exist]{Theorem \ref*{thm:exist}} into two parts: first we show the result with time tilted, then we show that this change does not matter.
\begin{theorem}\label{thm:existreg}
For all $s\geq 0$, $t > 0$ and $r \in \mathbb{R}$, the limit
\begin{align*}
J_{s,t}(r) &= \lim_{x \to \infty} - \frac{1}{x} \log P \left(\log Z_{0, \lfloor xs \rfloor}(0, xt - 1) \geq xr \right)
\end{align*}
exists and is $\mathbb{R}_+$ valued.  Moreover, $J_{s,t}(r)$ is continuous, convex, subadditive, and positively homogeneous of degree one as a function of $(s, t,r) \in [0,\infty) \times(0,\infty) \times\mathbb{R}$. For fixed $s$ and $t$, $J_{s,t}(r)$ is increasing in $r$ and $J_{s,t}(r) = 0$ if $r \leq \rho(s,t)$.
\end{theorem}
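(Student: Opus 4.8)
The plan is to run the superadditivity scheme of \cite{Sep98} for right-tail rate functions, with the supermultiplicativity relation \eqref{gensubmult} as the engine and the unit time-tilt as the device that makes the combinatorial bookkeeping close up. The purpose of the ``$-1$'' in $Z_{0,\lfloor xs\rfloor}(0,xt-1)$ is to leave one free unit of time when two pieces are concatenated: a piece of length $x$ uses only $xt-1$ of its $xt$ units of time, so joining pieces of lengths $x$ and $x'$ leaves a spare unit, which can be spent traversing the $O(1)$ surplus lines coming from $\lfloor x(s+s')\rfloor\ge\lfloor xs\rfloor+\lfloor x's'\rfloor$ and correcting the non-additivity of the tilt. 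A Brownian increment over a bounded interval moves $\log Z$ by $O(1)$ in probability, so none of this affects the linear rate; I record these as routine monotonicity estimates based on \eqref{semimg}, namely $\log Z_{0,k}(0,\tau+c)\ge\log Z_{0,k}(0,\tau)+B_k(\tau,\tau+c)$ and $\log Z_{0,k+1}(0,\tau)\ge\log Z_{0,k}(0,\tau-\delta)+B_{k+1}(\tau-\delta,\tau)$.

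First I would fix a rational triple $(s,t,r)$ with $t>0$, choose $q\in\mathbb{N}$ with $qs,qt\in\mathbb{Z}$, set $a_n=-\log P\bigl(\log Z_{0,nqs}(0,nqt-1)\ge nqr\bigr)$, and combine \eqref{gensubmult}, translation invariance, independence of the environment, and monotonicity in the time argument to get $a_{m+n}\le a_m+a_n$. A Fekete argument then gives $\tfrac{1}{nq}a_n\to\inf_n\tfrac{1}{nq}a_n=:J_{s,t}(r)\in[0,\infty]$ along $x\in q\mathbb{N}$, while the same concatenation run across two different rational triples (now genuinely using the spare unit to absorb floor mismatches) makes $(s,t,r)\mapsto\log P(\,\cdot\,)$ superadditive on the lattice, hence $J$ subadditive, and therefore, together with rational scalings, convex at rational $\lambda$ on the rational lattice. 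Positive homogeneity of degree one is immediate from the change of variables $y=cx$ in the defining limit, since $\lfloor x(cs)\rfloor=\lfloor ys\rfloor$ and $x(ct)-1=yt-1$; full subadditivity of $J$ then follows, since a positively homogeneous convex function is subadditive.

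Next I would check $J<\infty$, which with $J\ge0$ (probabilities are at most one) gives the $\mathbb{R}_+$-valued claim. Forcing the line-$0$ excursion of the path to occupy $[0,(t-\epsilon)x]$ yields $\log Z_{0,\lfloor xs\rfloor}(0,xt-1)\ge B_0(0,(t-\epsilon)x)+\log Z_{0,\lfloor xs\rfloor}\bigl((t-\epsilon)x,\,xt-1\bigr)$ with independent summands; the first has a Gaussian right tail of order $e^{-Cx}$, and the second exceeds $x(\rho(s,\epsilon)-\delta)$ with probability tending to one by Lemma \ref{lem:fe} --- whose almost sure limit is unchanged by replacing the starting index $1$ with $0$, shrinking the time argument by a constant, and passing to real $x$, via an elementary sandwich. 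Hence $J_{s,t}(r)<\infty$, and the resulting bound is locally bounded in $(s,t,r)$. For fixed $(s,t)$ the function $J_{s,t}(r)$ is nondecreasing in $r$ because $P(\log Z\ge xr)$ is, and for $r\le\rho(s,t)$ the probability $P\bigl(\log Z_{0,\lfloor xs\rfloor}(0,xt-1)\ge xr\bigr)$ tends to one, so $J_{s,t}(r)=0$.

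Finally, for continuity I would invoke that a rational-convex, locally bounded function on a dense subset of an open convex set extends uniquely to a continuous convex function, so $J$ extends to a continuous convex function on $(0,\infty)\times(0,\infty)\times\mathbb{R}$; it reaches the boundary $s=0$ because $J_{s,t}(r)$ is also nonincreasing in $s$ (adding lines can only raise $P(\log Z\ge xr)$) and matches the explicit value $\tfrac{1}{2t}(r\vee0)^2$ there. To see the limit exists along all real $x$ and all real triples and equals this extension, I would sandwich: monotonicity of $Z_{0,k}(0,\tau)$ in $(k,\tau)$ (via \eqref{semimg} and the extra-line estimate) and of $P(\,\cdot\,\ge xr)$ in $r$ trap $\limsup_x$ and $\liminf_x$ of $-\tfrac1x\log P$ at any real triple between the lattice values of $J$ at nearby rational triples, which converge to the continuous extension. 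The step I expect to be the main obstacle is the concatenation bookkeeping of the first two paragraphs --- extracting simultaneously the subadditivity of $a_n$ and the superadditivity of $\log P$ in $(s,t,r)$ from \eqref{gensubmult}, by spending the spare unit of time to absorb the mismatch between $\lfloor x(s+s')\rfloor$ and $\lfloor xs\rfloor+\lfloor x's'\rfloor$ and the non-additivity of the tilt --- together with the companion point that all of this, and Lemma \ref{lem:fe}, survive the passage to real $x$ and the index-and-time adjustments in the definition of $J_{s,t}$.
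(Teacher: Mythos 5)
Most of your plan is the paper's own route: the unit time-tilt absorbing floor mismatches to make $-\log P$ subadditive up to a bounded correction, a Fekete/Kuczma-type limit theorem giving existence, convexity, homogeneity and subadditivity, interior continuity from convexity plus finiteness, monotonicity in $r$ from the prelimit, and $J_{s,t}(r)=0$ for $r\le\rho(s,t)$ from Lemma \ref{lem:fe}. Your finiteness bound (a long line-$0$ excursion carrying the deviation, the law of large numbers for the remainder) is a workable substitute for the paper's Jensen-type bound in Lemma \ref{lem:existlb}. One small slip: the ``extra-line estimate'' $\log Z_{0,k+1}(0,\tau)\ge\log Z_{0,k}(0,\tau-\delta)+B_{k+1}(\tau-\delta,\tau)$ is not what \eqref{gensubmult} gives; the correct second factor is $Z_{k,k+1}(\tau-\delta,\tau)$, an integral over the jump time, but this only changes things by $O(1)$ and is harmless.

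The genuine gap is continuity at the boundary $s=0$, which is part of the statement (and is exactly what is later needed in Lemma \ref{lem:Hcont}). You reduce it to the claim that $J_{s,t}(r)$ is nonincreasing in $s$ because ``adding lines can only raise $P(\log Z\ge xr)$.'' That claim is false in general: there is no pathwise or stochastic domination of $Z_{0,n}(0,\tau)$ by $Z_{0,n+1}(0,\tau)$ — an extra line forces an extra jump and costs roughly a factor of the time available per line — and correspondingly $\rho(s,t)$ is not monotone in $s$ (it decreases like $-s\log s$ for large $s$ at fixed $t$), so for $r$ fixed the probability eventually decreases in $s$. More importantly, even if such monotonicity held near $s=0$ it points the wrong way: nonincreasing in $s$ yields $J_{s_k,t_k}(r_k)\le J_{0,t_k}(r_k)$, i.e.\ an upper bound, and upper semicontinuity at the boundary is already available from convexity (\cite[Theorem 10.2]{Rock}). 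What must be proved is the lower bound $\liminf_{(s',t',r')\to(0,t,r)}J_{s',t'}(r')\ge \frac{(r\vee 0)^2}{2t}$, i.e.\ that giving the walk a few extra lines cannot cheapen the deviation, and nothing in your proposal delivers a quantitative lower bound on $J_{s,t}(r)$ that survives $s\to 0$. The paper gets precisely this from the Brownian last-passage/GUE comparison: by Lemma \ref{lem:JGUE} (Baryshnikov, Gravner--Tracy--Widom, with the GUE right-tail rate $J_{GUE}$ of Ledoux), $J_{s_k,t_k}(r_k)\ge s_k\,J_{GUE}\bigl(\tfrac{r_k-s_k\log t_k-s_k+s_k\log s_k}{2\sqrt{t_ks_k}}-1\bigr)$, and a calculus computation shows this tends to $\frac{r^2}{2t}$. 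Some input of this kind must be added to your argument before the continuity claim on all of $[0,\infty)\times(0,\infty)\times\mathbb{R}$ is justified.
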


\begin{proof}
Define the function $T: [0,\infty) \times (1,\infty) \times \mathbb{R} \to \mathbb{R}_+$ by
\begin{align*}
T(x,y,z)&= - \log P \left( \log Z_{0, \lfloor x \rfloor}(0,y - 1) \geq z \right).
\end{align*}
\hyperref[lem:existlb]{Lemma \ref*{lem:existlb}} in the appendix implies that $P \left( \log Z_{0, \lfloor x \rfloor}(0,y - 1) \geq z \right) \neq 0$ and therefore that this function is well-defined.

Take $(x_1, y_1, z_1),(x_2, y_2, z_2) \in [0,\infty) \times(1,\infty) \times \mathbb{R}$ and call $x_{1,2} = \lfloor x_1 + x_2 \rfloor - \lfloor x_1 \rfloor - \lfloor x_2 \rfloor \in \{0,1\}$. By \hyperref[gensubmult]{(\ref*{gensubmult})}, we have
\begin{align*}
Z_{0, \lfloor x_1 + x_2 \rfloor}(0,y_1 + y_2 - 1)&\geq  Z_{0, \lfloor x_1 \rfloor}(0,y_1 - 1) Z_{\lfloor x_1 \rfloor, \lfloor x_1 + x_2 \rfloor}(y_1 - 1,y_1 + y_2 - 1).
\end{align*}
Independence and translation invariance then imply
\begin{align*}
&P \left( \log Z_{0, \lfloor x_1 + x_2 \rfloor}(0,y_1 + y_2 - 1) \geq z_1 + z_2 \right) \\
 &\hskip100pt \geq P \left( \log Z_{0, \lfloor x_1 \rfloor}(0,y_1 - 1) \geq z_1 \right) P \left( \log Z_{0, \lfloor x_2 \rfloor + x_{1,2}}(0,y_2) \geq z_2 \right).
\end{align*}
If $x_{1,2} = 0$ then, recalling that $\log Z_{\lfloor x_2\rfloor, \lfloor x_2 \rfloor}(u,t) = B_{\lfloor x_2 \rfloor}(u,t)$, we find
\begin{align*}
P \left( \log Z_{0, \lfloor x_2 \rfloor}(0,y_2) \geq z_2 \right) &\geq P \left( \log Z_{0, \lfloor x_2 \rfloor }(0,y_2 - 1) \geq z_2 \right) P\left(\log Z_{\lfloor x_2 \rfloor, \lfloor x_2 \rfloor}(y_2 -1, y_2) \geq 0\right) \\
&= \frac{1}{2}P \left( \log Z_{0, \lfloor x_2 \rfloor}(0,y_2 - 1) \geq z_2 \right).
\end{align*}
Similarly, when $x_{1,2} = 1$ we have
\begin{align*}
P \left( \log Z_{0, \lfloor x_2 \rfloor +1}(0,y_2) \geq z_2\right) &\geq  P \left( \log Z_{0, \lfloor x_2 \rfloor}(0,y_2-1) \geq z_2\right) P \left( \log Z_{0, 1}(0,1) \geq 0\right).
\end{align*}

Setting $C = \max\{\log(2), - \log P \left(\log Z_{0, 1}(0,1) \geq 0\right)\} < \infty$, we find that
\begin{align*}
T(x_1 + x_2, y_1 + y_2, z_1 + z_2) &\leq T(x_1, y_1, z_1) + T(x_2, y_2, z_2) + C.
\end{align*}
$T(x,y,z)$ is therefore subadditive with a bounded correction. Non-negativity and \hyperref[lem:existlb]{Lemma \ref*{lem:existlb}} imply that $T(x,y,z)$ is bounded for $x,y,z$ in a compact subset of its domain. The proof of \cite[Theorem 16.2.9]{Kuczma} shows that we may define a function on $[0,\infty) \times (0,\infty) \times \mathbb{R}$ by
\begin{align*}
J_{s,t}(r) &=  \lim_{x \to \infty} - \frac{1}{x} \log P \left(\log Z_{0, \lfloor xs \rfloor}(0, xt - 1) \geq xr \right)
\end{align*}
and that this function satisfies all of the regularity properties in the statement of the theorem except continuity and monotonicity. Monotonicity in $r$ for fixed $s$ and $t$ follows from monotonicity in the prelimit expression. Convexity and finiteness imply continuity on $(0,\infty)^2 \times \mathbb{R}$ \cite[Theorem 10.1]{Rock}. Moreover, \cite[Theorem 10.2]{Rock} gives upper semicontinuity on all of $[0,\infty) \times (0,\infty) \times \mathbb{R}$. It therefore suffices to show lower semicontinuity at the boundary; namely, we need to show
\begin{align*}
\liminf_{(s',t',r') \to (0,t,r)} J_{s', t'}(r') &\geq J_{0,t}(r).
\end{align*}

Fix $(t, r) \in (0,\infty) \times \mathbb{R}$ and a sequence $(s_k,t_k,r_k) \in [0,\infty) \times (0,\infty) \times \mathbb{R}$ with $(s_k, t_k, r_k) \to (0,t, r)$. Recall that $\log Z_{0,0}(0,t) = B_0(t)$, so we may compute with the normal distribution to find $J_{0,t}(r) = \frac{r^2}{2t}1_{\{r \geq 0\}}$. From this we can see that if $s_k = 0$ for all sufficiently large $k$, we have $J_{s_k, t_k}(r_k) \to J_{0, t}(r)$. We may therefore assume without loss of generality that $s_k > 0$ for all $k$. First observe that if $r \leq 0$, then $J_{0,t}(r) = 0$ and the lower bound follows from non-negativity.

If $r > 0$, we may assume without loss of generality that there exists $c >0$ with $r_k > c$ for all $k$. By \hyperref[lem:JGUE]{Lemma \ref*{lem:JGUE}} in the appendix, for all sufficiently large $k$ we have
\begin{align*}
J_{s_k, t_k}(r_k) &\geq  s_k J_{GUE}\left(\frac{r_k - s_k \log t_k - s_k + s_k\log s_k}{2 \sqrt{t_k s_k}} - 1  \right),
\end{align*}
where $J_{GUE}(r) = 4\int_0^r \sqrt{x(x+2)}dx$. Using this formula and calculus, we find that
\begin{align*}
\lim_{k \to \infty} s_k J_{GUE}\left(\frac{r_k - s_k \log t_k - s_k + s_k \log s_k}{2 \sqrt{t_k s_k}} - 1  \right) &= \frac{r^2}{2t}
\end{align*}
and therefore continuity follows. \hyperref[lem:fe]{Lemma \ref*{lem:fe}} implies that $J_{s,t}(r) = 0$ for $r \leq \rho(s,t)$.
\end{proof}

\begin{remark}
Note that we only address the spatial boundary in the previous result. The reason for this is that the right tail rate function is not continuous at $t = 0$ for any $s > 0$ and $x \in \mathbb{R}$. To see this, we can use the lower bound for $J_{s,t}(r)$ coming from \hyperref[lem:JGUE]{Lemma \ref*{lem:JGUE}}. As $t \downarrow 0$, this lower bound tends to infinity.
\end{remark}
\begin{lemma} \label{lem:sequences}
Fix $(s,t,r) \in (0,\infty)^2 \times \mathbb{R}$. For any sequences $s_n, t_n\in \mathbb{N} \times (0,\infty)$ with $\frac{1}{n}(s_n, t_n) \to (s,t)$ we have
\begin{align*}
J_{s,t}(r) &= \lim_{n \to \infty} - \frac{1}{n} \log P \left( \log Z_{0, s_n }(0, t_n) \geq  n r \right).
\end{align*}
\end{lemma}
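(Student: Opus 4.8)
The plan is to sandwich $P\big(\log Z_{0,s_n}(0,t_n)\ge nr\big)$, up to exponentially negligible corrections, between probabilities of the canonical tilted partition functions appearing in Theorem~\ref{thm:existreg}, and then to let a comparison parameter $\epsilon\downarrow 0$ using the joint continuity of $J$. Since the limit in Theorem~\ref{thm:existreg} exists as $x\to\infty$ through the reals, it exists along the integers, so $-\tfrac1n\log P\big(\log Z_{0,\lfloor n\sigma\rfloor}(0,n\tau-1)\ge nr'\big)\to J_{\sigma,\tau}(r')$ for every fixed $(\sigma,\tau,r')\in(0,\infty)^2\times\mathbb{R}$; this, together with the continuity of $J$, is all we use about the right-tail rate function.

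For the upper bound on the rate, fix small $\epsilon,\eta>0$ with $s-2\epsilon,t-2\epsilon>0$. Since $s_n/n\to s$ and $t_n/n\to t$, for all large $n$ we have $\lfloor n(s-2\epsilon)\rfloor\le s_n$ and $n(t-2\epsilon)-1\le t_n$, so supermultiplicativity \eqref{gensubmult} gives
\[
Z_{0,s_n}(0,t_n)\ \ge\ Z_{0,\lfloor n(s-2\epsilon)\rfloor}\big(0,n(t-2\epsilon)-1\big)\cdot Z_{\lfloor n(s-2\epsilon)\rfloor,\,s_n}\big(n(t-2\epsilon)-1,\,t_n\big).
\]
The two factors depend on disjoint time slices of the driving Brownian motions and are therefore independent. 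The correction factor has the law of $Z_{0,K_n}(0,T_n)$ with $K_n=s_n-\lfloor n(s-2\epsilon)\rfloor$ and $T_n=t_n-n(t-2\epsilon)+1$, both of which lie between (roughly) $n\epsilon$ and $3n\epsilon$ for $n$ large; a crude volume lower bound on the integral together with a Gaussian tail estimate (alternatively the GUE estimates of Lemma~\ref{lem:HGUE}) shows $P\big(\log Z_{0,K_n}(0,T_n)\ge -n\eta\big)\to 1$ once $\epsilon$ is small relative to $\eta$. Intersecting $\{\log Z_{0,\lfloor n(s-2\epsilon)\rfloor}(0,n(t-2\epsilon)-1)\ge n(r+\eta)\}$ with this event, using independence, taking $-\tfrac1n\log$ and letting $n\to\infty$ yields $\limsup_n-\tfrac1n\log P\big(\log Z_{0,s_n}(0,t_n)\ge nr\big)\le J_{s-2\epsilon,\,t-2\epsilon}(r+\eta)$; sending $\epsilon\downarrow 0$ and then $\eta\downarrow 0$ and using continuity of $J$ gives $\limsup_n\le J_{s,t}(r)$.

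The lower bound on the rate is symmetric, now comparing upward: for large $n$, $\lfloor n(s+2\epsilon)\rfloor\ge s_n$ and $n(t+2\epsilon)-1\ge t_n$, and \eqref{gensubmult} gives
\[
Z_{0,\lfloor n(s+2\epsilon)\rfloor}\big(0,n(t+2\epsilon)-1\big)\ \ge\ Z_{0,s_n}(0,t_n)\cdot Z_{s_n,\,\lfloor n(s+2\epsilon)\rfloor}\big(t_n,\,n(t+2\epsilon)-1\big),
\]
with an independent correction factor that is again a partition function on a roughly balanced macroscopic block whose log exceeds $-n\eta$ with probability tending to $1$. On the intersection of $\{\log Z_{0,s_n}(0,t_n)\ge nr\}$ with that event the log of the left-hand side is at least $n(r-\eta)$, so independence and $-\tfrac1n\log$ give $\liminf_n-\tfrac1n\log P(\log Z_{0,s_n}(0,t_n)\ge nr)\ge J_{s+2\epsilon,\,t+2\epsilon}(r-\eta)$; letting $\epsilon\downarrow0$ then $\eta\downarrow0$ and using continuity of $J$ gives $\liminf_n\ge J_{s,t}(r)$, which together with the previous paragraph proves the lemma. (When $r\le\rho(s,t)$ both bounds collapse to $J_{s,t}(r)=0$, so no separate argument is needed.)

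The one genuine subtlety is that, unlike the time endpoint — where the decomposition \eqref{semimg} makes $Z$ monotone up to a Brownian factor — there is no direct monotonicity of the partition function in the number of spatial steps, so the mismatch between $(s_n,t_n)$ and the canonical parameters must be absorbed through supermultiplicativity. The point of comparing to $(s\pm2\epsilon,t\pm2\epsilon)$ rather than trying to match to $o(n)$ accuracy is to guarantee that the peeled-off correction blocks are macroscopic and comparable in both coordinates, so that their log-partition functions are of order $n\epsilon$; the estimate controlling these blocks (a crude volume bound plus a Gaussian, or a GUE, large deviation estimate) is the only place where real work is needed.
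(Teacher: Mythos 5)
Your argument is correct and is essentially the paper's own proof: peel off a macroscopic $\epsilon$-block using the supermultiplicativity \eqref{gensubmult}, show the peeled block's log-partition function is negligible via a product of one-step factors with a Jensen/Gaussian lower bound, and then let $\epsilon \downarrow 0$ using the continuity of $J_{s,t}(r)$ in $(s,t,r)$ from Theorem \ref{thm:existreg}. The only differences are cosmetic — you put the correction threshold at $-n\eta$ (probability tending to one, with $r$ shifted by $\eta$) where the paper keeps it at $0$ and absorbs an $O(\epsilon)$ cost in the rate, and your parenthetical appeal to Lemma \ref{lem:HGUE} would not apply as stated to order-$n$ blocks, but your primary volume-plus-Gaussian estimate is exactly what the paper uses.
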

\begin{proof}
Fix $\epsilon < \min(s,t)$ and positive. We will assume that $n$ is large enough that the following conditions hold:
\begin{align*}
\Big\lfloor \left(s - \frac{\epsilon}{2}\right)n \Big\rfloor < s_n < \Big \lfloor \left(s + \frac{\epsilon}{2}\right)n \Big\rfloor , \qquad \left(t - \frac{\epsilon}{2}\right)n < t_n < \left(t + \frac{\epsilon}{2}\right) n - 2.
\end{align*}
We have
\begin{align*}
Z_{0, s_n}(0, t_n) &\geq Z_{0, \lfloor (s - \epsilon) n \rfloor}(0, (t - \epsilon)n - 1) Z_{\lfloor (s - \epsilon) n\rfloor, s_n}( (t - \epsilon)n - 1, t_n).
\end{align*}
It follows that
\begin{align*}
&P \left(\log Z_{0, s_n}(0, t_n) \geq n r \right) \\
&\hskip20pt \ge P\left(\log Z_{0, \lfloor (s - \epsilon) n \rfloor}(0, (t - \epsilon)n - 1) \geq nr\right) P\left(\log Z_{0, s_n - \lfloor (s-\epsilon)n \rfloor}(0, t_n - (t - \epsilon)n + 1) \geq 0 \right). \notag
\end{align*}
Call $s(n) = s_n - \lfloor (s - \epsilon) n \rfloor$ and $t(n) = t_n - (t - \epsilon) n + 1$ and divide the interval $(0, t(n))$ into $s(n)$ uniform subintervals. We may bound $Z_{0, s(n)}(0,t(n))$ below by a product of i.i.d. random variables:
\begin{align*}
Z_{0, s(n)}(0,t(n)) \geq \prod_{i=1}^{s(n)} Z_{i-1,i}\left((i-1)\frac{t(n)}{s(n)}, i \frac{t(n)}{s(n)}\right).
\end{align*}
Therefore,
\begin{align*}
P\left(\log Z_{0, s(n)}(0,t(n)) \geq 0 \right) &\geq P\left(\log Z_{0, 1}\left(0,\frac{t(n)}{s(n)}\right) \geq 0 \right)^{s(n)}.
\end{align*}
Notice that $\lim \frac{s(n)}{n} = \epsilon$ and $\lim \frac{t(n)}{n} = \epsilon$, so we may further assume without loss of generality that $\frac{1}{2} < \frac{t(n)}{s(n)} < 2$ for all $n$. We have
\begin{align*}
Z_{0,1}\left(0, \frac{t(n)}{s(n)} \right) \geq Z_{0,1}\left(0, \frac{1}{2}\right)Z_{1,1}\left(\frac{1}{2}, \frac{t(n)}{s(n)}\right) =Z_{0,1}\left(0, \frac{1}{2}\right)e^{B_1\left(\frac{1}{2}, \frac{t(n)}{s(n)} \right)}
\end{align*}
so that
\begin{align*}
P\left(\log Z_{0, 1}\left(0,\frac{t(n)}{s(n)}\right) \geq 0 \right) &\geq \frac{1}{2} P\left(\log Z_{0, 1}\left(0,\frac{1}{2}\right) \geq 0 \right).
\end{align*}
Therefore for $C = \log(2) - \log P\left(\log Z_{0, 1}\left(0,\frac{1}{2}\right) \geq 0 \right)$ and all $\epsilon < \min(s,t)$ we have
\begin{align*}
\limsup - \frac{1}{n} \log P\left( \log Z_{0, s_n}(0,t_n) \geq nr \right) \leq J_{s - \epsilon, t - \epsilon}(r) + \epsilon C
\end{align*}
sending $\epsilon \downarrow 0$  and applying continuity of the rate function gives one inequality. A similar argument gives  the $\liminf$ inequality.
\end{proof}

The next corollary follows from convexity of $J_{s,t}(x)$ in $(s,t,x) \in (0,\infty)^2\times \mathbb{R}$. Details can be found in the first few lines of the proof of \cite[Lemma 4.6]{GS13}.
\begin{lemma}\label{lem:lfreg}
For all $\xi > 0$, $J_{s,t}^*(\xi)$ is convex as a function of $(s,t) \in (0,\infty)^2$.
\end{lemma}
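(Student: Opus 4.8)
The plan is to read off the convexity-type regularity of $(s,t)\mapsto J^*_{s,t}(\xi)$ asserted in the lemma directly from the joint convexity of $J_{s,t}(r)$ in $(s,t,r)\in(0,\infty)^2\times\mathbb{R}$, which is part of Theorem~\ref{thm:exist}, using only the definition of $J^*_{s,t}(\xi)$ as the Legendre--Fenchel transform of $r\mapsto J_{s,t}(r)$. The first thing I would record is that $J^*_{s,t}(\xi)$ is finite for every $\xi>0$ and every $(s,t)\in(0,\infty)^2$: this is noted at the start of the proof of Corollary~\ref{cor:lfform} as a consequence of the variational identity in Lemma~\ref{lem:lfvar}, and it can also be seen from the exponential moment bound of Lemma~\ref{momentbound}, which forces $r\mapsto J_{s,t}(r)$ to grow superlinearly and hence makes the supremum defining $J^*_{s,t}(\xi)$ a genuine finite number. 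Finiteness on the open convex set $(0,\infty)^2$ is exactly what is needed to upgrade this regularity in $(s,t)$ to continuity via \cite[Theorem 10.1]{Rock}, which is the only downstream use of the lemma (see the first lines of the proof of Corollary~\ref{cor:lfform}).

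For the main step I would fix $\xi>0$, points $(s_0,t_0),(s_1,t_1)\in(0,\infty)^2$ and $\lambda\in(0,1)$, set $(s_\lambda,t_\lambda)=\lambda(s_1,t_1)+(1-\lambda)(s_0,t_0)$, and choose near-maximizers $x_0,x_1\in\mathbb{R}$ for the suprema defining $J^*_{s_0,t_0}(\xi)$ and $J^*_{s_1,t_1}(\xi)$. Evaluating the supremum defining $J^*_{s_\lambda,t_\lambda}(\xi)$ at the test point $\lambda x_1+(1-\lambda)x_0$ and invoking joint convexity of $J$ at the triples $(s_i,t_i,x_i)$ gives a one-line linear comparison between $J^*_{s_\lambda,t_\lambda}(\xi)$ and $\lambda J^*_{s_1,t_1}(\xi)+(1-\lambda)J^*_{s_0,t_0}(\xi)$; letting the near-maximizers approach optimality yields the relation along the segment from $(s_0,t_0)$ to $(s_1,t_1)$. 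This is exactly the computation carried out in the first lines of the proof of \cite[Lemma 4.6]{GS13}. No maximizer needs to be attained, so there is essentially no analytic obstacle in this step.

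A second, more structural route that I would keep in mind exploits the positive homogeneity of degree one of $J$: by homogeneity $J^*_{s,t}(\xi)=s\,J^*_{1,\,t/s}(\xi)$, so the two-variable statement on $(0,\infty)^2$ reduces to the corresponding one-variable statement for $\nu\mapsto J^*_{1,\nu}(\xi)$ together with the elementary fact that the perspective map $(s,t)\mapsto s\,\phi(t/s)$ inherits the convexity type of $\phi$ on $\{s>0\}$. The one-variable input is again joint convexity of $J$, now only along the ray through $(1,\nu,\cdot)$, so this gives a slightly cleaner bookkeeping of the same argument.

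The convexity argument itself is soft; the point that requires care — and what I would flag as the main obstacle — is making sure its two ingredients are genuinely available before the lemma is invoked, namely the joint convexity and positive homogeneity of $J$ from Theorem~\ref{thm:exist}, and the finiteness of $J^*_{s,t}(\xi)$ from Lemma~\ref{lem:lfvar}. Once those are in hand the lemma follows immediately, and the continuity statement used in Corollary~\ref{cor:lfform} is an immediate consequence via \cite[Theorem 10.1]{Rock}.
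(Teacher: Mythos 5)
Your approach is the same as the paper's: the paper's entire proof consists of the remark that the lemma follows from joint convexity of $J_{s,t}(x)$ in $(s,t,x)$, deferring the computation to the first lines of the proof of \cite[Lemma 4.6]{GS13}, and that computation is precisely the near-maximizer argument you describe. However, you leave the direction of your ``one-line linear comparison'' unspecified, and if you write it out it does not give convexity. Setting $f(s,t,r)=r\xi-J_{s,t}(r)$, joint convexity of $J$ makes $f$ jointly \emph{concave}, and evaluating the supremum defining $J^*_{s_\lambda,t_\lambda}(\xi)$ at the test point $\lambda x_1+(1-\lambda)x_0$ gives
\begin{align*}
J^*_{s_\lambda,t_\lambda}(\xi) \;\geq\; f\bigl(s_\lambda,t_\lambda,\lambda x_1+(1-\lambda)x_0\bigr) \;\geq\; \lambda f(s_1,t_1,x_1)+(1-\lambda)f(s_0,t_0,x_0),
\end{align*}
so that after optimizing over the near-maximizers one obtains $J^*_{s_\lambda,t_\lambda}(\xi)\geq \lambda J^*_{s_1,t_1}(\xi)+(1-\lambda)J^*_{s_0,t_0}(\xi)$. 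That is \emph{concavity} of $(s,t)\mapsto J^*_{s,t}(\xi)$, the general fact being that partial maximization of a jointly concave function is concave (whereas a supremum over $r$ of functions concave in $(s,t)$ need not be). This is confirmed by the closed form eventually obtained in Corollary~\ref{cor:lfform}, which exhibits $J^*_{s,t}(\xi)$ as an infimum over $\theta$ of functions affine in $(s,t)$ --- hence concave, and visibly not affine, hence not convex. So the word ``convex'' in the statement of the lemma is a slip for ``concave,'' and your write-up inherits it; the analogous statement in \cite{GS13} is a concavity statement.

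The slip is harmless for the paper: the only use of the lemma is to deduce continuity of $J^*_{s,t}(\xi)$ on the open quadrant $(0,\infty)^2$ via \cite[Theorem 10.1]{Rock}, and that applies verbatim to the finite convex function $-J^*_{s,t}(\xi)$. Your two side remarks are fine but should be placed correctly: the finiteness of $J^*_{s,t}(\xi)$ (from Lemma~\ref{lem:lfvar}, or from the moment bound as you note) is an ingredient of the continuity step in Corollary~\ref{cor:lfform}, not of the present lemma, and your alternative route through degree-one homogeneity and the perspective map is valid but likewise delivers concavity, since the perspective construction preserves concavity on $\{s>0\}$ when applied to a concave $\phi$.
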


\subsection{Regularity for the stationary right tail rate functions} \label{subsec:statreg}
Next, we turn to regularity for $H_{u,v,s,t}^\theta(x)$ and $G_{a,s,t}^\theta(x)$, which are defined in \hyperref[GHdef]{(\ref*{GHdef})}. We begin with the proof of \hyperref[lem:Hcont]{Lemma \ref*{lem:Hcont}}. This result is the only point in the paper where we directly use the continuity up to the boundary in \hyperref[thm:exist]{Theorem \ref*{thm:exist}}.
\begin{proof}[Proof of Lemma \ref*{lem:Hcont}]
Notice that $\theta t, a\Psi_0(\theta),$ and $\rho(s-b,t + \gamma)$ are bounded for $a,b \in [0,s]$ and $\gamma \in [0,t]$. Using this fact and the formula for $H_{a,b,s,t + \gamma}^\theta(r)$ coming from \hyperref[cor:infcon]{Corollary \ref*{cor:infcon}} and \hyperref[lem:indep]{Lemma \ref*{lem:indep}}, there exists a compact set $K'$ containing $K$ so that for all $r \in K$, $a,b \in [0,s]$, and $\gamma \in [0,t]$
\begin{align*}
H_{a,b,s,t + \gamma}^\theta(r) &= \inf_{x \in K'} \{R_t^\theta \square U_a^\theta(x) + J_{s-b, t + \gamma}(r - x)\}.
\end{align*}
Note that $(a,x) \mapsto R_t^\theta \square U_a^\theta(x)$ is continuous on $[0,s] \times \mathbb{R}$. By \hyperref[thm:exist]{Theorem \ref*{thm:exist}}, for any compact set $K'$ we have joint uniform continuity of $(a, b, \gamma, r,x) \mapsto R_t^\theta \square U_a^\theta(x) + J_{b, t + \gamma}(r-x)$ on the compact set $[0,s]^2 \times [0,t] \times K' \times K'$ and so the result follows.
\end{proof}
The proof of \hyperref[lem:Gcont]{Lemma \ref*{lem:Gcont}} is similar to the proof of \hyperref[lem:Hcont]{Lemma \ref*{lem:Hcont}}, so we omit it. Next, we turn to the proof of \hyperref[lem:Ginf]{Lemma \ref*{lem:Ginf}}, which shows that $G_{a,s,t}^\theta(x)$ tends to infinity locally uniformly near near $a = t$.
\begin{proof}[Proof of Lemma \ref*{lem:Ginf}]
We have
\begin{align*}
G_{a,s,t}^\theta(x) &= \begin{cases}
0 & x \leq - \theta(t-a) + \rho(s, t-a) \\
\displaystyle \inf_{-\theta (t-a) \leq y \leq x - \rho(s,t-a)} \{J_{s,t-a}(x-y) + R_{t-a}^\theta(y)\} & x > - \theta(t-a) + \rho(s, t-a)
\end{cases}
\end{align*}

Fix $\epsilon > 0$. The formula in \hyperref[lem:fe]{Lemma \ref*{lem:fe}} shows that $\rho(s,t-a) \to -\infty$ as $a \uparrow t$, so that for all $x \in \mathbb{R}$ and $a$ sufficiently close to $t, x > - \theta(t-a) + \rho(s, t-a)$. For $a$ sufficiently large that this holds for all $x \in K$, we have
\begin{align*} 
J_{s,t-a}(x-y) + R_{t-a}(y) &\geq \begin{cases}
J_{s,t-a}(x + \theta (t-a) - \epsilon)) \quad &  y \in [-\theta(t-a), -\theta(t-a) + \epsilon] \\
R_{t-a}^\theta(-\theta(t-a) + \epsilon)  & y\in [-\theta(t-a) + \epsilon, x-\rho(s,t-a) - \epsilon] \\ 
R_{t-a}^\theta(x - \rho(s,t-a) - \epsilon)  & y\in[x - \rho(s,t-a) - \epsilon, x - \rho(s,t)] 
\end{cases}
\end{align*} 
By \hyperref[lem:JGUE]{Lemma \ref*{lem:JGUE}}, for all $x \in K$ and $a$ sufficiently large, we have
\begin{align*}
J_{s,t-a}(x) &\geq  s J_{GUE}\left(\frac{x - s \log(t-a) - s (1 - \log(s))}{2 \sqrt{(t-a) s}} - 1  \right)
\end{align*}
Combining this with the exact formula for $R_{t-a}^\theta(x)$ and optimizing the lower bounds over $x \in K$ shows that the infimum over $x \in K$ of the minimum of these three lower bounds tends to infinity, giving the result.
\end{proof}

\section{Technical estimates} \label{sec:estimates}
To reduce the clutter elsewhere in the paper, we collect a number of useful estimates in this appendix.
\subsection{A lower bound on the probability of being large}
\begin{lemma} \label{lem:existlb}
Let $K \subset [0,\infty) \times (0,\infty) \times \mathbb{R}$ be compact. Then there exists $C_K > 0$ so that for all $(x,y,z) \in K$
\begin{align*}
P(\log Z_{0, \lfloor x \rfloor}(0,y) \geq z) &\geq C_K.
\end{align*}
\end{lemma}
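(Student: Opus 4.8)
The plan is to reduce, by compactness of $K$, to finitely many fixed line counts and then to decouple the outermost Brownian motions using the supermultiplicativity bounds recorded just before Lemma~\ref{lem:fe}. Since $(x,y,z)\mapsto\lfloor x\rfloor$ takes only the values $0,1,\dots,N$ on $K$, and $K$ is a compact subset of $[0,\infty)\times(0,\infty)\times\mathbb{R}$, the numbers $y_-=\min\{y:(x,y,z)\in K\}$, $y_+=\max\{y:(x,y,z)\in K\}$, and $z_+=\max\{z:(x,y,z)\in K\}$ satisfy $0<y_-\le y_+<\infty$ and $z_+<\infty$. As $P(\log Z_{0,n}(0,y)\ge z)$ is nonincreasing in $z$, it suffices to produce a strictly positive lower bound for $P(\log Z_{0,n}(0,y)\ge z_+)$ that is uniform over $n\in\{0,\dots,N\}$ and $y\in[y_-,y_+]$.

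First I would peel off the top line. For $y\ge y_-$, the bound $Z_{m,n}(0,t+u)\ge Z_{m,n}(0,t)Z_{n,n}(t,t+u)$ (with $m=0$, $t=y_-$, $u=y-y_-$; the case $n=0$ being the identity $Z_{0,0}(0,t+u)=Z_{0,0}(0,t)Z_{0,0}(t,t+u)$) gives
\begin{align*}
Z_{0,n}(0,y)\ \ge\ Z_{0,n}(0,y_-)\,Z_{n,n}(y_-,y)\ =\ Z_{0,n}(0,y_-)\,e^{B_n(y_-,y)}.
\end{align*}
The factor $Z_{0,n}(0,y_-)$ is measurable with respect to the increments of $B_0,\dots,B_n$ over $[0,y_-]$, hence independent of the family $\{B_n(y_-,y):y\ge y_-\}$, so
\begin{align*}
P\bigl(\log Z_{0,n}(0,y)\ge z_+\bigr)\ \ge\ P\bigl(\log Z_{0,n}(0,y_-)\ge z_++1\bigr)\,P\bigl(B_n(y_-,y)\ge-1\bigr).
\end{align*}
Since $B_n(y_-,y)\sim N(0,y-y_-)$, the last factor equals $\Phi\bigl(1/\sqrt{y-y_-}\bigr)$ when $y>y_-$ and equals $1$ when $y=y_-$, with $\Phi$ the standard normal distribution function; in either case it is at least $\Phi\bigl(1/\sqrt{y_+-y_-}\bigr)>0$ uniformly in $y\in[y_-,y_+]$ (take the constant $1$ if $y_-=y_+$).

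It remains to check that the finitely many numbers $P\bigl(\log Z_{0,n}(0,y_-)\ge z_++1\bigr)$, $0\le n\le N$, are strictly positive, equivalently that $\log Z_{0,n}(0,y_-)$ has unbounded support. For $n\ge1$, peeling off the bottom line via $Z_{m,n}(0,t+u)\ge Z_{m,m}(0,t)Z_{m,n}(t,t+u)$ (with $m=0$, $t=u=y_-/2$) gives $Z_{0,n}(0,y_-)\ge e^{B_0(0,y_-/2)}\,Z_{0,n}(y_-/2,y_-)$, where the two factors are independent because $Z_{0,n}(y_-/2,y_-)$ depends on $B_0$ only through its increments after time $y_-/2$ and not at all on $B_0$ before that time. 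Hence $\log Z_{0,n}(0,y_-)$ dominates the sum of the independent random variables $B_0(0,y_-/2)\sim N(0,y_-/2)$ and the a.s.\ finite $\log Z_{0,n}(y_-/2,y_-)$; a Gaussian has unbounded support and adding an independent a.s.\ finite random variable preserves this, so the probability in question is positive. For $n=0$ this is immediate since $\log Z_{0,0}(0,y_-)=B_0(0,y_-)\sim N(0,y_-)$. Taking $C_K$ to be the product of $\Phi\bigl(1/\sqrt{y_+-y_-}\bigr)$ (or $1$ if $y_-=y_+$) with $\min_{0\le n\le N}P\bigl(\log Z_{0,n}(0,y_-)\ge z_++1\bigr)$ completes the proof.

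The lemma presents no genuine obstacle; the only point requiring attention is the bookkeeping of which family of increments each partition function is measurable with respect to, which is what underpins the two independence assertions above.
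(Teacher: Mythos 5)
Your proof is correct. It differs from the paper's argument in how it obtains positivity and uniformity. The paper also first fixes the finitely many values of $\lfloor x\rfloor$, but then bounds $Z_{0,\lfloor x\rfloor}(0,y)$ below by a product of i.i.d.\ single-step partition functions over a uniform subdivision of $[0,y]$, and bounds each factor explicitly via Jensen's inequality, $\log Z_{0,1}(0,t)\ge \log t+\frac1t\int_0^t B_0(u)\,du+\frac1t\int_0^t B_1(u,t)\,du$, i.e.\ a Gaussian with mean $\log t$ and variance $2t/3$; the resulting explicit Gaussian lower bound is jointly continuous in $(y,z)$ and hence uniformly positive on the compact set (and the same Jensen bound is reused later in Lemma \ref{lem:normtail}). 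You instead handle uniformity in $y$ by monotonicity in $z$ together with peeling off the top line, $Z_{0,n}(0,y)\ge Z_{0,n}(0,y_-)e^{B_n(y_-,y)}$, which reduces matters to finitely many fixed triples, and you settle those by a soft support argument (peel off the bottom line, Gaussian plus independent a.s.\ finite variable). Both routes rest on the supermultiplicativity inequalities stated before Lemma \ref{lem:fe} and on independence of disjoint Brownian increments; the paper's version yields an explicit quantitative bound that gets recycled elsewhere, while yours is shorter on the analytic side, trading the explicit estimate for a qualitative unbounded-support argument. Your independence bookkeeping (which increments each factor is measurable with respect to) is exactly the point that needs care, and you have it right.
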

\begin{proof}
Since $\lfloor x \rfloor $ takes only finitely many values on any compact set, we may fix $\lfloor x \rfloor$. If $\lfloor x \rfloor = 0$, then $Z_{0, \lfloor x \rfloor}(0,y) = B_0(y)$ and the result follows. For $\lfloor x \rfloor \geq 1$, we bound below by an i.i.d. product: $Z_{0, \lfloor x \rfloor}(0,y)  \geq \prod_{i=0}^{\lfloor x \rfloor -1} Z_{i, i+1}\left(i \frac{y}{\lfloor x \rfloor},(i+1)\frac{y}{\lfloor x \rfloor}\right)$. It follows that
\begin{align}\label{ineq:existlb}
P\left(\log Z_{0, \lfloor x \rfloor}(0,y) \geq z  \right) \geq P\left( \log Z_{0,1}\left(0, \frac{y}{\lfloor x \rfloor}\right) \geq \frac{z}{\lfloor x \rfloor} \right)^{\lfloor x \rfloor}.
\end{align}
Jensen's inequality applied to $\log Z_{0,1}(0,t)$ gives
\begin{align}\label{Jensenlb}
\log Z_{0,1}(0,t) &= \log \int_0^t e^{B_0(u) +B_1(u,t)}du \geq \log(t) + \frac{1}{t}\int_0^t B_0(u) du + \frac{1}{t}\int_0^t B_1(u,t) du,
\end{align}
where $\frac{1}{t}\int_0^t B_0(u) du$ and $\frac{1}{t}\int_0^t B_1(u,t) du$ are i.i.d. mean zero normal random variables with variance $\frac{t}{3}$. Applying this lower bound to the expression in \hyperref[ineq:existlb]{(\ref*{ineq:existlb})} gives the result. \end{proof}

\subsection{Moment estimate for the partition function}
\begin{lemma} \label{momentbound}
Fix $t > 0$, $n \in \mathbb{N}$ and $\xi \in \mathbb{R}$ with $|\xi| > 1$. Then there is a constant $C > 0$ depending only on $\xi$ so that
\begin{align*}
E\left[ Z_{1,n}(0,t)^{\xi} \right] &\leq C \left( \frac{\sqrt{n}}{t}\left(\frac{te}{n}\right)^n\right)^\xi e^{\frac{1}{2}\xi^2 t}.
\end{align*}
\end{lemma}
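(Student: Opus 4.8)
The plan is to reduce the whole estimate to one application of Jensen's inequality, the Gaussian moment generating function, and Stirling's formula. Write $V = \frac{t^{n-1}}{(n-1)!}$ for the volume of the Weyl chamber $A_{n,t} = \{0 < u_1 < \dots < u_{n-1} < t\}$, and for $u = (u_1,\dots,u_{n-1}) \in A_{n,t}$ put $X(u) = B_1(0,u_1) + \sum_{i=2}^{n-1} B_i(u_{i-1},u_i) + B_n(u_{n-1},t)$, so that $Z_{1,n}(0,t) = V \int_{A_{n,t}} e^{X(u)}\,\frac{du}{V}$. The observation to emphasize at the outset is that for each fixed $u$ the increments making up $X(u)$ are independent with variances $u_1,\, u_2-u_1,\, \dots,\, t-u_{n-1}$, which telescope to $t$ regardless of $u$; hence $X(u) \sim N(0,t)$ and $E[e^{\xi X(u)}] = e^{\xi^2 t/2}$ for every $\xi \in \mathbb{R}$ and every $u$.

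Next I would apply Jensen's inequality on the probability space $(A_{n,t}, \frac{du}{V})$ with the map $x \mapsto x^\xi$, which is convex on $(0,\infty)$ precisely because $\xi(\xi-1) > 0$ when $|\xi| > 1$. This yields $Z_{1,n}(0,t)^\xi \le V^{\xi-1} \int_{A_{n,t}} e^{\xi X(u)}\,du$; taking expectations, using Tonelli, and inserting the Gaussian moment gives
\begin{align*}
E\big[Z_{1,n}(0,t)^\xi\big] \;\le\; V^{\xi-1} \int_{A_{n,t}} E\big[e^{\xi X(u)}\big]\,du \;=\; V^\xi e^{\frac12 \xi^2 t}.
\end{align*}

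Finally I would compare $V^\xi$ with $\big(\tfrac{\sqrt n}{t}(\tfrac{te}{n})^n\big)^\xi$ using the elementary two-sided Stirling bound $\sqrt{2\pi n}\, n^n e^{-n} \le n! \le e\sqrt n\, n^n e^{-n}$, valid for all $n \ge 1$; dividing by $n$ this gives $\tfrac{1}{e}\,\tfrac{\sqrt n}{t}\big(\tfrac{te}{n}\big)^n \le V \le \tfrac{1}{\sqrt{2\pi}}\,\tfrac{\sqrt n}{t}\big(\tfrac{te}{n}\big)^n$. For $\xi > 1$ the upper bound on $V$ gives $V^\xi \le (2\pi)^{-\xi/2}\big(\tfrac{\sqrt n}{t}(\tfrac{te}{n})^n\big)^\xi$, and for $\xi < -1$ the lower bound on $V$ gives, the inequality reversing under the negative exponent, $V^\xi \le e^{-\xi}\big(\tfrac{\sqrt n}{t}(\tfrac{te}{n})^n\big)^\xi$; taking $C = C(\xi)$ to be whichever constant is relevant for the sign of $\xi$ completes the proof. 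There is no substantive obstacle here: the only points requiring care are that the variances telescope to $t$ independently of $u$ (so that the $u$-integral merely restores a factor of $V$), that $|\xi| > 1$ is exactly the condition ensuring convexity of $x \mapsto x^\xi$, and that one must invoke the two opposite directions of Stirling's inequality according to whether $\xi > 1$ or $\xi < -1$.
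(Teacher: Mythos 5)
Your proof is correct and follows essentially the same route as the paper: Jensen's inequality with respect to the uniform measure on the Weyl chamber $A_{n,t}$ (valid since $x \mapsto x^\xi$ is convex for $|\xi|>1$), Tonelli plus the Gaussian moment generating function of the increments (whose variances telescope to $t$), giving $E[Z_{1,n}(0,t)^\xi] \leq |A_{n,t}|^\xi e^{\frac{1}{2}\xi^2 t}$, and then Stirling's approximation for $|A_{n,t}| = \frac{t^{n-1}}{(n-1)!}$. The only difference is that you make explicit the sign-dependent case analysis (upper versus lower Stirling bound for $\xi>1$ versus $\xi<-1$), which the paper leaves implicit.
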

\begin{proof}
Recall the definition of $A_{n,t}$ in \hyperref[Weyldef]{(\ref*{Weyldef})}.  By Jensen's inequality with respect to the uniform measure on $A_{n,t}$ and Tonelli's theorem we find
\begin{align*}
E\left[ Z_{1,n}(0,t)^{\xi} \right] &=  E\left[\left(\int_{0 < s_1 < \dots < s_{n-1} < t}e^{\sum_i B(s_i, s_{i+1})} ds_1 \dots ds_{n-1}\right)^\xi \right] \\
&\leq |A_{n,t}|^{-1} |A_{n,t}|^\xi \int_{0 < s_1 < \dots < s_{n-1} < t} E[e^{\xi \sum_i B(s_i, s_{i+1})} ]ds_1 \dots ds_{n-1} \\
&= |A_{n,t}|^{\xi} e^{\frac{1}{2}\xi^2 t},
\end{align*}
where we have used independence of the Brownian increments and the moment generating function of the normal distribution to compute the last line. The remainder of the statement of the lemma comes from the identity $|A_{n,t}| = \frac{t^{n-1}}{(n-1)!}$ and Stirling's approximation to $n!$.
\end{proof}

\subsection{Bounds from the GUE connection} \label{GUEsub}
Let $\lambda_{GUE,n}$ be the top eigenvalue of an $n\times n$ GUE random matrix with entries that have variance $\sigma^2 = \frac{1}{4n}$. Then \cite[Theorem 0.7]{Bar01} and \cite{GTW01} give
\begin{align*}
\lambda_{GUE,n} \stackrel{d}{=} \frac{1}{2\sqrt{n}} \max_{0 = u_0 < u_1 < \dots < u_{n-1} < u_n = 1}\left\{ \sum_{i=1}^n B_i(u_{i-1}, u_i)\right\}.
\end{align*}
The right tail rate function of $\lambda_{GUE,n}$ can be computed (\cite[(1.25)]{Ledoux}, \cite{BDG01}) for $r>0$  to be
\begin{align} \label{JGUE}
J_{GUE}(r) = \lim_{n \to \infty} - \frac{1}{n} \log P\left(\lambda_{GUE,n} \geq 1 + r  \right) =  4 \int_0^r \sqrt{x(x+2)}dx
\end{align}

\begin{lemma}\label{lem:JGUE} Suppose that $r,s,t>0$ and $(s_n, t_n, r_n) \in \mathbb{N} \times (0,\infty) \times \mathbb{R}$ satisfy $n^{-1}(s_n,t_n,r_n) \to (s,t,r)$. If $r - s \log t- s + s\log s > 2 \sqrt{t s}$, then
\begin{align*}
\liminf_{n \to \infty} - \frac{1}{n} \log P\left(\log Z_{0, s_n}(0,t_n) \geq r_n \right) &\geq s J_{GUE}\left(\frac{r - s \log t - s + s \log s)}{2 \sqrt{t s}} - 1\right)
\end{align*}
and if $r + s \log t +  s - s\log s > 2 \sqrt{t s}$, then
\begin{align*}
\liminf_{n \to \infty} - \frac{1}{n} \log P\left(\log Z_{0, s_n}(0,t_n) \leq - r_n \right) &\geq s J_{GUE}\left(\frac{r + s \log t + s - s \log s}{2 \sqrt{t s}} - 1\right).
\end{align*}
\end{lemma}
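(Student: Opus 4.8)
The plan is to sandwich $\log Z_{0,s_n}(0,t_n)$ between the Brownian last passage time with the same number of lines and the same time horizon (from above) and a Jensen lower bound (from below), and then to feed the GUE right‑tail large deviations in \eqref{JGUE} into both estimates. Throughout I would write $A_0 = s\log t - s\log s + s$; using $|A_{m,\tau}| = \tau^{m-1}/(m-1)!$, Stirling's formula, and $n^{-1}(s_n,t_n,r_n)\to(s,t,r)$, one has $\log|A_{s_n+1,t_n}| = \log\!\big(t_n^{s_n}/s_n!\big) = nA_0 + o(n)$, which is the only asymptotic input needed.

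For the first inequality I would bound the partition function by the volume of its simplex times the supremum of the path energy: with $v_0=0$, $v_i=u_{i-1}$ for $1\le i\le s_n$, and $v_{s_n+1}=t_n$, one has $Z_{0,s_n}(0,t_n)\le |A_{s_n+1,t_n}|\,e^{M_n}$ where $M_n = \sup_{0=v_0<\dots<v_{s_n+1}=t_n}\sum_{i=0}^{s_n}B_i(v_i,v_{i+1})$ is the Brownian last passage value through $s_n+1$ independent Brownian motions over $[0,t_n]$. By the Baryshnikov and Gravner--Tracy--Widom identity (subsection~\hyperref[GUEsub]{\ref*{GUEsub}}) together with Brownian scaling, $M_n\stackrel{d}{=}2\sqrt{t_n(s_n+1)}\,\lambda_{GUE,s_n+1}$, so $P(\log Z_{0,s_n}(0,t_n)\ge r_n)\le P\big(\lambda_{GUE,s_n+1}\ge(r_n-\log|A_{s_n+1,t_n}|)/(2\sqrt{t_n(s_n+1)})\big)$. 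The argument on the right converges to $(r-A_0)/(2\sqrt{ts}) = 1+\rho$, where $\rho := (r-s\log t-s+s\log s)/(2\sqrt{ts})-1>0$ by hypothesis, so for small fixed $\epsilon>0$ it eventually exceeds $1+\rho-\epsilon$; then \eqref{JGUE} and $(s_n+1)/n\to s$ give $\liminf_n -n^{-1}\log P(\log Z_{0,s_n}(0,t_n)\ge r_n)\ge s\,J_{GUE}(\rho-\epsilon)$, and letting $\epsilon\downarrow0$ with continuity of $J_{GUE}$ finishes the first claim.

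For the lower tail the observation I would exploit is that a crude estimate already beats $s\,J_{GUE}$. Jensen's inequality for the normalized uniform measure on the simplex gives $\log Z_{0,s_n}(0,t_n)\ge \log|A_{s_n+1,t_n}| + G_n$, where $G_n$ is the average of the path energy $g(v)=\sum_{i=0}^{s_n}B_i(v_i,v_{i+1})$ over the simplex and is a centered Gaussian. Taking $U$ uniform on the simplex and independent of the environment, for each fixed $v$ the energy $g(v)$ is mean‑zero Gaussian of variance $\sum_i(v_{i+1}-v_i)=t_n$ by telescoping, so conditional Jensen yields $\mathrm{Var}(G_n)=\E\big[(\E_U g(U))^2\big]\le \E\,\E_U\big[g(U)^2\big]=t_n$. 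Since $r_n+\log|A_{s_n+1,t_n}|=n(r+s\log t+s-s\log s)+o(n)$ and the hypothesis forces $r+s\log t+s-s\log s>2\sqrt{ts}>0$, this quantity is eventually positive and the Gaussian tail bound gives $P(\log Z_{0,s_n}(0,t_n)\le -r_n)\le \exp\!\big(-(r_n+\log|A_{s_n+1,t_n}|)^2/(2t_n)\big)$, hence $\liminf_n -n^{-1}\log P(\log Z_{0,s_n}(0,t_n)\le -r_n)\ge (r+s\log t+s-s\log s)^2/(2t)$. Writing $r+s\log t+s-s\log s=2\sqrt{ts}(1+\rho')$ with $\rho'=(r+s\log t+s-s\log s)/(2\sqrt{ts})-1>0$, this lower bound equals $2s(1+\rho')^2$, and since $\phi(\rho)=(1+\rho)^2-2\int_0^\rho\sqrt{x(x+2)}\,dx$ has $\phi(0)=1$ and $\phi'(\rho)=2(1+\rho)-2\sqrt{\rho(\rho+2)}\ge0$ (equivalent to $1\ge0$), one gets $2s(1+\rho')^2\ge 4s\int_0^{\rho'}\sqrt{x(x+2)}\,dx=s\,J_{GUE}(\rho')$, which is the claimed bound.

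I expect no deep obstacle here: the Stirling asymptotics and the floor bookkeeping are routine, and the only point needing a small idea is the lower tail, where one must recognize that the Gaussian estimate coming from Jensen — with the variance pinned to $t_n$ by the telescoping identity — already dominates the GUE rate function through the elementary comparison $\phi\ge\phi(0)=1>0$. The mildly delicate step is keeping the asymptotics of $\log|A_{s_n+1,t_n}|$, $t_n$, and $r_n$ aligned so that the GUE large deviation \eqref{JGUE} can be applied along the integer sequence $s_n+1\to\infty$.
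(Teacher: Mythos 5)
Your proof is correct. For the first (upper-tail) bound you argue exactly as the paper does: bound the partition function by the simplex volume times the exponential of the Brownian last passage value, apply the Baryshnikov/Gravner--Tracy--Widom identity with Brownian scaling, feed in \hyperref[JGUE]{(\ref*{JGUE})}, and handle the Stirling asymptotics of $\log|A_{s_n+1,t_n}|$; the $\pm1$ discrepancy in the number of lines is immaterial. For the second (lower-tail) bound you take a genuinely different route. The paper bounds the partition function below by $|A_{s_n+1,t_n}|$ times the exponential of the \emph{minimum} of the Brownian path energy, uses Brownian reflection symmetry to turn that minimum into the maximum, and then reuses the same GUE right-tail estimate, which produces the stated rate $s\,J_{GUE}\bigl(\tfrac{r+s\log t+s-s\log s}{2\sqrt{ts}}-1\bigr)$ directly. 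You instead use the Jensen lower bound $\log Z_{0,s_n}(0,t_n)\ge \log|A_{s_n+1,t_n}|+G_n$ with $G_n$ the simplex average of the path energy, note that $G_n$ is centered Gaussian with variance at most $t_n$ (the telescoping variance identity plus Jensen/Fubini), apply a Gaussian tail bound, and then close the gap with the elementary pointwise inequality $2(1+\rho)^2\ge J_{GUE}(\rho)$, verified by checking $\phi(\rho)=(1+\rho)^2-2\int_0^\rho\sqrt{x(x+2)}\,dx$ satisfies $\phi(0)=1$ and $\phi'\ge0$. Your version avoids invoking the GUE machinery for the lower tail altogether and in fact yields the stronger Gaussian rate $2s(1+\rho')^2$, at the cost of the extra comparison step; the paper's version is shorter once the GUE identity is in hand and lands exactly on the stated $s\,J_{GUE}$ form. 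Either argument suffices for the downstream uses of the lemma, since only the stated bound is ever invoked.
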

\begin{proof}
Recall the definition of $A_{n,t}$ in \hyperref[Weyldef]{(\ref*{Weyldef})} and observe that
\begin{align*}
|A_{n+1,t}| = \frac{t^n}{n!} \leq \frac{1}{\sqrt{2\pi n}}\left(\frac{te}{n}\right)^n.
\end{align*}
Using this fact and bounding $Z_{0,n}(0,t)$ as defined in \hyperref[polydefgr]{(\ref*{polydefgr})} above with the maximum value of the Brownian increments, we obtain
\begin{align*}
\log Z_{0, s_n}(0, t_n) &\leq \log\left( \frac{1}{\sqrt{2\pi s_n}} \left(\frac{t_n e}{s_n}\right)^{s_n} \right) + \max_{0 = u_0 < u_1 < \dots < u_{s_n} = t_n}\left\{ \sum_{i=0}^{s_n-1} B_i(u_i, u_{i+1})\right\} \\
&\stackrel{\tiny{d}}{=} \log\left( \frac{1}{\sqrt{2\pi s_n}} \left(\frac{t_n e}{s_n}\right)^{s_n} \right) + 2\sqrt{t_n s_n} \lambda_{GUE, s_n}
\end{align*}
The result then follows from the inequality
\begin{align*}
P\left(\log Z_{0, s_n}(0,t_n) \geq r_n \right) \leq P\left(\lambda_{GUE, s_n} \geq \frac{r_n - s_n\log t_n - s_n + s_n \log s_n }{2\sqrt{t_n s_n}} - \frac{1}{2\sqrt{t_ns_n}}\log\left(\frac{1}{\sqrt{2\pi s_n}}\right) \right).
\end{align*}
The proof of the second bound follows a similar argument: we bound the partition function below with the minimum of the Brownian increments, apply the upper bound from Stirling's approximation to $n!$, and appeal to Brownian reflection symmetry.
\end{proof}
\begin{lemma}\label{lem:HGUE}
Fix $\epsilon > 0$ and let $s \in \mathbb{N}$  and $t_n  = O(n^\alpha)$ for some $\alpha < 1$. Then there exist $c,C > 0$ so that
\begin{align*}
P\left(\max_{0 = u_0 < u_1 < \dots < u_{s - 1} < u_s =  t_n}\left\{ \sum_{i=0}^{s - 1}B_i(u_i, u_{i+1})\right\} \geq n \epsilon \right) &\leq Ce^{-cn^{2 - \alpha}}
\end{align*}
\end{lemma}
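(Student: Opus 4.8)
The bound to be proven is crude --- it involves no sharp rate function --- so the plan is to avoid the Baryshnikov--Gravner--Tracy--Widom identity altogether and simply estimate the Weyl-chamber maximum by the triangle inequality. Since $s$ is \emph{fixed} and each increment $B_i(u_i,u_{i+1})$ with $0 \le u_i \le u_{i+1} \le t_n$ is at most the oscillation of $B_i$ over $[0,t_n]$, one has the deterministic bound
\begin{align*}
\max_{0 = u_0 < \dots < u_s = t_n}\left\{ \sum_{i=0}^{s-1} B_i(u_i,u_{i+1})\right\} \le \sum_{i=0}^{s-1}\Big(\sup_{0\le a\le t_n} B_i(a) - \inf_{0\le a\le t_n} B_i(a)\Big) \le 2\sum_{i=0}^{s-1}\sup_{0\le a\le t_n}|B_i(a)|,
\end{align*}
so that a union bound gives
\begin{align*}
P\left(\max_{0 = u_0 < \dots < u_s = t_n}\sum_{i=0}^{s-1} B_i(u_i,u_{i+1}) \ge n\epsilon\right) \le \sum_{i=0}^{s-1} P\left(\sup_{0\le a\le t_n}|B_i(a)| \ge \frac{n\epsilon}{2s}\right) = s\, P\left(\sup_{0\le a\le t_n}|B(a)| \ge \frac{n\epsilon}{2s}\right),
\end{align*}
where $B$ denotes a standard Brownian motion.

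The next step is to apply the reflection principle together with the elementary Gaussian tail bound $P(B(t)\ge x)\le e^{-x^2/(2t)}$, which yields $P(\sup_{0\le a\le t}|B(a)|\ge x) \le 4e^{-x^2/(2t)}$ for $x > 0$. Taking $x = n\epsilon/(2s)$ and invoking the hypothesis $t_n\le C' n^\alpha$ (valid for all large $n$ and some constant $C'$), one obtains
\begin{align*}
s\, P\left(\sup_{0\le a\le t_n}|B(a)| \ge \frac{n\epsilon}{2s}\right) \le 4s\exp\left(-\frac{n^2\epsilon^2}{8s^2 t_n}\right) \le 4s\exp\left(-\frac{\epsilon^2}{8s^2 C'}\,n^{2-\alpha}\right),
\end{align*}
which is exactly the asserted estimate with $c = \epsilon^2/(8s^2 C')$ and $C = 4s$, after enlarging $C$ to absorb the finitely many small $n$ for which $t_n \le C' n^\alpha$ might fail.

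There is no real obstacle here; the one point worth checking is the first display, namely that the triangle-inequality bound loses nothing that matters. It does not, because the required decay is only at the sub-Gaussian scale $e^{-cn^{2-\alpha}}$, whereas the finer information encoded in $J_{GUE}$ would be relevant only at the exponential scale $e^{-cn}$. As an alternative one could instead use the identity recalled in subsection \ref{GUEsub} together with Brownian scaling to write the maximum as $2\sqrt{t_n s}\,\lambda_{GUE,s}$ in distribution, reducing the claim to a bound of the form $P(\lambda_{GUE,s} \ge c'' n^{1-\alpha/2}) \le Ce^{-cn^{2-\alpha}}$; for fixed $s$ the top eigenvalue $\lambda_{GUE,s}$ is dominated by the Hilbert--Schmidt norm of a fixed-dimensional Gaussian matrix and hence has a sub-Gaussian tail, which again suffices. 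I would present the elementary route, as it is shorter.
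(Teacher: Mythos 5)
Your argument is correct, but it takes a genuinely different route from the paper. The paper's proof is a one-line application of the Baryshnikov/Gravner--Tracy--Widom identity recalled in subsection \ref{GUEsub}: after Brownian scaling, the Weyl-chamber maximum over $[0,t_n]$ has the law of $2\sqrt{t_n s}\,\lambda_{GUE,s}$, so the event in question becomes $\{\lambda_{GUE,s} \geq \mathrm{const}\cdot n^{1-\alpha/2}\}$, and the deviation inequality \cite[(2.7)]{Ledoux} for the largest GUE eigenvalue then gives $Ce^{-cn^{2-\alpha}}$ directly --- essentially the ``alternative route'' you sketch at the end. What you present instead replaces the eigenvalue input by the deterministic bound of the maximum by $2\sum_{i}\sup_{[0,t_n]}|B_i|$, a union bound over the fixed number $s$ of lines, and the reflection principle with a Gaussian tail estimate; every step checks out, including the enlargement of $C$ to absorb the finitely many $n$ for which $t_n \leq C'n^{\alpha}$ may fail (legitimate since the probability is at most one), and you get explicit constants $c=\epsilon^2/(8s^2C')$, $C=4s$. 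Your justification for why the crude triangle-inequality bound loses nothing is also the right one: with $s$ fixed and the deviation at scale $n\epsilon \gg \sqrt{t_n s}$, only sub-Gaussian decay in the deviation variable is needed, so the sharp $J_{GUE}$ information matters only at the exponential scale handled elsewhere (e.g.\ in Lemma \ref{lem:JGUE}). In short, your elementary route buys self-containedness and explicit constants, while the paper's route buys brevity by reusing the GUE machinery it has already set up; either proof is acceptable.
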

\begin{proof} Large deviation estimates for largest eigenvalues give the result. For example, by  \cite[(2.7)]{Ledoux}, there exist $C,c>0$ such that
\begin{align*}
P\left(\max_{0 = u_0 < u_1 < \dots < u_{s - 1} < u_s =  t_n} \left\{ \sum_{i=0}^{s - 1}B_i(u_i, u_{i+1}) \geq n \epsilon \right\}\right) &= P\left(\lambda_{GUE,s} \geq \frac{n}{\sqrt{t_n}}\frac{\epsilon}{\sqrt{s}} \right) \leq Ce^{-cn^{2 - \alpha}}.
\end{align*}
\end{proof}

\subsection{Upper tail coarse graining estimate}
\begin{lemma} \label{uppercoarse1} Fix $a \in [0,t)$ and $\epsilon > 0$. Then for $\nu < \min(\epsilon, t-a)$
\begin{align*}
&P\left(\log n \int_a^{a + \nu} \frac{Z_0^\theta(nu)}{Z_0^\theta(n a)} \cdot \frac{Z_{1,\lfloor ns \rfloor}(nu,nt)}{Z_{1,\lfloor ns \rfloor}(n a,nt)} du \geq n \epsilon \right)  \leq \exp\left\{-n \frac{1}{4} \left(\frac{\epsilon - \theta \nu}{\sqrt{\nu}}\right)^2 +o(n)\right\}.
\end{align*}
\end{lemma}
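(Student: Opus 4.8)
The plan is to peel off the boundary and polymer structure with crude deterministic bounds and then reduce everything to a Gaussian tail estimate for the supremum of a Brownian motion. First I would use the explicit form $Z_0^\theta(r) = e^{\theta r - B(r)}$, so that $Z_0^\theta(nu)/Z_0^\theta(na) = e^{\theta n(u-a) - B(na,nu)}$ and $\theta n(u-a) \le \theta n\nu$ for $u \in [a, a+\nu]$. For the polymer ratio I would invoke the decomposition \eqref{semimg}: when $\lfloor ns\rfloor > 1$,
\begin{align*}
&\log Z_{1,\lfloor ns\rfloor}(nu,nt) - \log Z_{1,\lfloor ns\rfloor}(na,nt) \\
&\qquad = -B_1(na,nu) + \log C_{1,\lfloor ns\rfloor}(nu,nt) - \log C_{1,\lfloor ns\rfloor}(na,nt),
\end{align*}
and since $C_{1,\lfloor ns\rfloor}$ is decreasing in its first (time) argument and $a \le u$, the last difference is $\le 0$; hence $Z_{1,\lfloor ns\rfloor}(nu,nt)/Z_{1,\lfloor ns\rfloor}(na,nt) \le e^{-B_1(na,nu)}$, with equality when $\lfloor ns\rfloor = 1$.

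Changing variables $u = a + v/n$ then gives
\begin{align*}
n\int_a^{a+\nu} \frac{Z_0^\theta(nu)}{Z_0^\theta(na)} \cdot \frac{Z_{1,\lfloor ns\rfloor}(nu,nt)}{Z_{1,\lfloor ns\rfloor}(na,nt)}\,du &\le e^{\theta n\nu}\int_0^{n\nu} e^{-(B+B_1)(na,\,na+v)}\,dv \\
&\le n\nu\, e^{\theta n\nu}\exp\Big\{\sup_{0 \le v \le n\nu}\big(-(B+B_1)(na,\,na+v)\big)\Big\},
\end{align*}
so, taking logarithms, the event in the lemma is contained in $\{\sup_{0\le v\le n\nu}(-(B+B_1)(na,na+v)) \ge n(\epsilon - \theta\nu) - \log(n\nu)\}$. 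Since $B$ and $B_1$ are independent standard Brownian motions, $v \mapsto (B+B_1)(na,na+v)$ is a Brownian motion with variance $2v$, so by symmetry and the reflection principle the supremum above has the law of $\sqrt{2n\nu}\,|N|$ with $N$ standard normal.

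It then remains to apply the Gaussian tail bound $P(|N| \ge x) \le 2e^{-x^2/2}$ at $x_n = (n(\epsilon-\theta\nu) - \log(n\nu))/\sqrt{2n\nu}$. Provided $\epsilon - \theta\nu > 0$ — the regime in which the lemma is applied, where $\nu$ is taken $< \epsilon/\theta$ — one has $x_n = \sqrt n(\epsilon-\theta\nu)/\sqrt{2\nu} - o(\sqrt n)$, hence $x_n^2/2 \ge \tfrac{n}{4}\big((\epsilon-\theta\nu)/\sqrt\nu\big)^2 - o(n)$, and the leading factor $2$ together with the $\log(n\nu)$ correction are absorbed into the $o(n)$, which yields the stated bound.

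I do not expect a genuine obstacle here: once the integral has been stripped down, the estimate is a one-line Gaussian tail bound. The only step needing care is the control of the polymer ratio $Z_{1,\lfloor ns\rfloor}(nu,nt)/Z_{1,\lfloor ns\rfloor}(na,nt)$ — it is essential that this be bounded by the exponential of a single Brownian increment rather than by a quantity growing with $n$, and this is precisely what the monotonicity of $C_{1,\lfloor ns\rfloor}$ recorded just after \eqref{semimg} supplies — together with the bookkeeping that combining the two independent environments $B$ and $B_1$ doubles the variance, which is what turns the usual $1/2$ in the Gaussian exponent into the $1/4$ appearing in the statement.
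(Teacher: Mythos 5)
Your proof is correct and follows essentially the same route as the paper: bound the ratio $Z_{1,\lfloor ns\rfloor}(nu,nt)/Z_{1,\lfloor ns\rfloor}(na,nt)$ by $e^{-B_1(na,nu)}$ (your use of the monotonicity of $C_{1,\lfloor ns\rfloor}$ is exactly the fact from which the paper's degenerate supermultiplicativity bound is derived), reduce to the running maximum of $B+B_1 \stackrel{d}{=} \sqrt{2}\,B$, and finish with reflection and a Gaussian tail, the doubled variance producing the $1/4$. Your explicit remark that the estimate requires $\epsilon-\theta\nu>0$ matches the regime $\nu<\epsilon/\theta$ in which the paper applies the lemma.
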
 

\begin{proof}
By \hyperref[gensubmult]{(\ref*{gensubmult})}, we have for all $u \in(a, a+ \nu)$
\begin{align*}
Z_{1,1}(na,nu)^{-1}Z_{1, \lfloor ns \rfloor}(nu,nt)^{-1} &\geq Z_{1,\lfloor ns \rfloor}(na,nt)^{-1}
\end{align*}
so it follows that
\begin{align*}
&P\left(\log n \int_a^{a + \nu} \frac{Z_0^\theta(nu)}{Z_0^\theta(n a)}\frac{Z_{1,\lfloor ns \rfloor}(nu,nt)}{Z_{1,\lfloor ns \rfloor}(n a,nt)} du \geq n \epsilon \right)  \notag \\
&\hskip35pt \leq P\left(\log n \int_a^{a + \nu} \frac{Z_0^\theta(nu)}{Z_0^\theta(n a)} Z_{1,1}(na,nu)^{-1} du \geq n \epsilon \right) \notag \\
&\hskip35pt= P\left(\log n \int_a^{a + \nu}e^{\theta n(u-a) - B(na,nu) - B_1(na,nu)} du \geq n \epsilon \right) \\
&\hskip35pt \leq P\left( \max_{0 \leq u \leq 1}\left\{B(u) + B_1(u) \right\} \geq \sqrt{n} \left(\frac{\epsilon - \theta \nu}{\sqrt{\nu}}\right) - \frac{\log(n \nu)}{\sqrt{n \nu}} \right),
\end{align*}
where the last inequality comes from Brownian translation invariance, symmetry, and scaling. Recall that $B + B_1$ has the same process level distribution as $\sqrt{2} B$. The result follows from the reflection principle.
\end{proof}

\subsection{Left tail error bound}
\begin{lemma}\label{lem:normtail}
Take sequences $t_n, s_n, r_n$ such that there exist $a,b > 0$ with $a < t_n < b$, $r_n \to r > 0$ and $s_n \in \mathbb{N}$ satisfies $s_n \log (s_n) = o(n)$. Then there exist constants $c,C>0$ such that
\begin{align*}
P\left(\log Z_{0,s_n}(0,t_n) \leq - n r_n \right) &\leq C e^{- c n^2}.
\end{align*}
\end{lemma}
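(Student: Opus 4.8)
The plan is to bound $\log Z_{0,s_n}(0,t_n)$ below by a centered Gaussian whose variance stays bounded, following the Jensen-type estimate already used in the proof of \hyperref[lem:existlb]{Lemma \ref*{lem:existlb}}. Set $h_n = t_n/s_n$ and subdivide $[0,t_n]$ into $s_n$ subintervals of length $h_n$. Iterating supermultiplicativity \hyperref[gensubmult]{(\ref*{gensubmult})} gives $Z_{0,s_n}(0,t_n) \ge \prod_{i=0}^{s_n-1} Z_{i,i+1}(ih_n,(i+1)h_n)$, and applying Jensen's inequality with the uniform probability measure on each subinterval exactly as in \hyperref[Jensenlb]{(\ref*{Jensenlb})} yields
$$\log Z_{0,s_n}(0,t_n) \;\ge\; s_n\log h_n + S_n, \qquad S_n := \sum_{i=0}^{s_n-1}\big(X_i+Y_i\big),$$
where $X_i = h_n^{-1}\int_{ih_n}^{(i+1)h_n} B_i(ih_n,v)\,dv$ and $Y_i = h_n^{-1}\int_{ih_n}^{(i+1)h_n} B_{i+1}(v,(i+1)h_n)\,dv$.

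Next I would compute the law of $S_n$. Each $X_i$ is the time average of a standard Brownian motion over an interval of length $h_n$, hence a centered Gaussian of variance $h_n/3$; the same holds for each $Y_i$ after a Brownian time reversal. The $2s_n$ variables in the collection $\{X_i,Y_i : 0\le i\le s_n-1\}$ are jointly Gaussian, and any two of them are measurable functions of increments of the independent family $\{B_j\}$ over disjoint time intervals, so they are pairwise uncorrelated and therefore mutually independent. Consequently $S_n$ is centered Gaussian with variance $2s_n\cdot h_n/3 = 2t_n/3$, which is bounded above by $2b/3$ uniformly in $n$ because $t_n<b$.

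Finally, since $\log t_n$ is bounded on $(a,b)$ and the hypothesis $s_n\log s_n = o(n)$ forces $s_n = o(n)$, the deterministic term obeys $s_n\log h_n = s_n\log t_n - s_n\log s_n = o(n)$; combined with $r_n\to r>0$ this gives, for all large $n$, $-nr_n - s_n\log h_n \le -nr/4$. Hence $P(\log Z_{0,s_n}(0,t_n)\le -nr_n) \le P(S_n\le -nr/4)$, and the standard Gaussian tail bound, using $\mathrm{Var}(S_n)\le 2b/3$, controls the right side by $\exp(-\tfrac{3r^2}{64b}n^2)$; enlarging the constant to absorb the finitely many small values of $n$ produces the desired $C,c>0$. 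The only step requiring care is the combinatorial bookkeeping that the $2s_n$ Gaussian averages use disjoint Brownian increments and hence are independent with variance summing to the bounded quantity $2t_n/3$; everything after that is a one-line Gaussian tail estimate, and the role of the assumption $s_n\log s_n = o(n)$ is precisely to render the deterministic shift $s_n\log h_n$ negligible on scale $n$.
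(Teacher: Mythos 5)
Your proof is correct and follows essentially the same route as the paper: lower-bounding $Z_{0,s_n}(0,t_n)$ by an i.i.d.\ product over $s_n$ time slices, applying the Jensen bound of \hyperref[Jensenlb]{(\ref*{Jensenlb})} to reduce to a sum of independent Gaussian averages with total variance $2t_n/3$, and finishing with a normal tail estimate after the $s_n\log s_n = o(n)$ hypothesis renders the deterministic shift negligible. The only difference is cosmetic: you separate the mean $s_n\log h_n$ from the Gaussian sum, while the paper keeps it inside the i.i.d.\ normals $X_i \sim N(\log(t_n/s_n), 2t_n/(3s_n))$.
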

\begin{proof}
We have $Z_{0,s_n}(0,t_n)\geq \prod_{i=0}^{s_n -1}Z_{i, i+1}\left(i \frac{t_n}{s_n}, (i+1) \frac{t_n}{s_n}\right)$ where the $Z_{i, i+1}\left(i \frac{t_n}{s_n}, (i+1) \frac{t_n}{s_n}\right)$ are i.i.d..  As above in \hyperref[Jensenlb]{(\ref*{Jensenlb})}, there exist i.i.d. random variables $X_i \sim N\left(\log \left( \frac{t_n}{s_n} \right), \frac{2 t_n}{3 s_n}\right)$ with $Z_{i, i+1}\left(i \frac{t_n}{s_n}, (i+1) \frac{t_n}{s_n}\right) \geq X_i$. It follows that
\begin{align*}
P\left(\log Z_{0,s_n}(0,t_n) \leq - n r_n\right) \leq P\left(\sum_{i=0}^{s_n -1} X_i \leq - n r_n\right) = P\left( N\left(0, 1\right) \geq n \frac{r_n}{\sqrt{3 t_n}} + \frac{s_n}{\sqrt{3 t_n}}\log\left(\frac{t_n}{s_n}\right) \right).
\end{align*}
Recall that $\frac{r_n}{\sqrt{3 t_n}} + \frac{s_n}{n\sqrt{3 t_n}}\log\left(\frac{t_n}{s_n}\right)$ is a bounded sequence and without loss of generality is bounded away from zero. The result follows from normal tail estimates.
\end{proof}

\end{appendices}


\begin{thebibliography}{99}
\bibitem{ACQ11}G.~Amir, I.~Corwin, and J.~Quastel. Probability distribution of the free energy of the continuum directed random polymer in 1 + 1 dimensions. {\em Comm. Pure Appl. Math.} \textbf{64} 466-537. (2011)

\bibitem{Bar01} Y.~Baryshnikov. GUEs and queues. {\em Probab. Theory Relat. Fields} \textbf{119}  pp. 256-274. (2001)

\bibitem{B09}
I.~Ben Ari.  Large deviations for partition functions of directed polymers in an IID field. {\em Ann. Inst. H.
Poincar\'e Probab. Statist.}  \textbf{45}  no. 3  pp. 770-792. (2009)

\bibitem{BDG01}
G.~Ben Arous, A.~Dembo, and A.~Guionnet. Aging of spherical spin glasses. {\em Probab. Theory Relat. Fields} \textbf{120}, pp. 1-67 (2001)

\bibitem{Bo89}
E.~Bolthausen. A note on the diffusion of directed polymers in a random environment. {\em Commun. Math. Phys.} \textbf{123} pp. 529-534. (1989)

\bibitem{BC14a}
A.~Borodin and I.~Corwin. Macdonald processes. {\em Probab. Theory Relat. Fields} \textbf{158} 1-2 pp. 225-400. (2014)

\bibitem{BC14b}
A.~Borodin and I.~Corwin. Moments and Lyapunov exponents for the parabolic Anderson model. {\em Ann. Appl. Probab.} \textbf{24}  no. 3  pp. 1172-1198. (2014)

\bibitem{BCF14}
A.~ Borodin, I.~Corwin, and P.~Ferrari. Free energy fluctuations for directed polymers in random media in 1+ 1 dimension. {\em Commun. Pure Appl. Math.} \textbf{67} pp. 1129-1214. (2014)

\bibitem{CSY04}
F.~Comets, T.~Shiga, and N.~Yoshida. Probabilistic analysis of directed polymers in a random environment: a review. {\em Adv. Stud. Pure Math.} \textbf{39} pp.115-142. (2004)

\bibitem{Co12}
I.~Corwin. The Kardar-Parisi-Zhang equation and universality class. {\em Random matrices: Theory and applications} \textbf{1} no. 1 (2012)

\bibitem{CSS14}
I.~Corwin, T.~Sepp\"al\"ainen and H.~Shen. The strict-weak lattice polymer. arXiv:1409.1794. (2014)

\bibitem{CGM09}
M.~Cranston, D.~Gauthier, and T.S.~Mountford. On large deviation regimes for random media models. {\em Ann. Appl. Probab.} \textbf{19} no. 2 pp. 826-862. (2009)

\bibitem{DemboZeitouni}
A.~Dembo and O.~Zeitouni. {\em Large Deviations Techniques and Applications}. Applications of Mathematics \textbf{38} Springer, New York. (1998)

\bibitem{GS13}
N.~Georgiou and T.~Sepp\"al\"ainen.  Large deviation rate functions for the partition function in a log-gamma distributed random potential. {\em Ann. Probab.} \textbf{41} no. 6  pp. 4248 - 4286. (2013)

\bibitem{GTW01}
J.~Gravner, C.A.~Tracy, and H.~Widom. Limit theorems for height fluctuations in a class of discrete space and time growth models. {\em J. Stat. Phys.} \textbf{102} nos. 5-6  pp. 1085-1132. (2001)

\bibitem{HMO02}
B.M.~Hambly, J.B.~Martin, and N.~O'Connell. Concentration results for a Brownian directed percolation problem. {\em Stochastic Process. Appl.} \textbf{102} pp. 207-220. (2002)

\bibitem{HH85}
D.A.~Huse and C.L.~Henly. Pinning and roughening of domain wall in Ising systems due to random impurities. \textit{Phys. Rev. Lett.} \textbf{54} 2708-2711. (1985)

\bibitem{IS88}
J.Z.~Imbrie and T.~Spencer. Diffusion of directed polymers in a random environment. {\em J. Stat. Phys.} \textbf{52} pp. 609-626. (1988)

\bibitem{Kuczma}
M.~Kuczma and A.~Gil\'anyi (ed.).  {\em An Introduction to the Theory of Functional Equations and Inequalities}. Birkh\"auser, Basel. (2009)

\bibitem{Ledoux}
M.~Ledoux. {\em Deviation Inequalities on Largest Eigenvalues}. Lecture Notes in Math. \textbf{1910} pp 167-219. Springer, New York. (2007) 

\bibitem{MY01}
H.~Matsumoto and M.~Yor. A relationship between Brownian motions with opposite drifts via certain
enlargements of the Brownian filtration. {\em Osaka Math J.} \textbf{38} pp. 1-16. (2001)

\bibitem{MO07}
J.~Moriarty and N.~O'Connell.  On the Free Energy of a Directed Polymer in a Brownian Environment. {\em Markov Process. Related Fields} \textbf{13} pp. 251-266. (2007)

\bibitem{O12}
N.~O'Connell. Directed Polymers and the Quantum Toda Lattice. {\em Ann. Probab.} \textbf{40} no. 2 pp.437-458. (2012)

\bibitem{OO14}
N.~O'Connell and J.~Ortmann. Tracy-Widom asymptotics for a random polymer with gamma-distributed weights. {\em  Electron. J. Probab} \textbf{20} no. 25 pp.1-18. (2015)

\bibitem{OY01}
N.~O'Connell and M.~Yor. Brownian analogues of Burkes theorem. {\em Stochastic Process. Appl.} \textbf{96} no. 2 pp. 285–304. (2001)

\bibitem{Rock}
R.T.~Rockafellar.  {\em Convex Analysis}. Princeton Mathematical Series Vol 28. Princeton University Press, Princeton NJ. (1970)

\bibitem{Sep98}
T.~Sepp\"al\"ainen. Large deviations for increasing sequences in the plane. {\em Probab. Theory Relat. Fields} \textbf{112} pp. 221-224. (1998)

\bibitem{Sep12}
T.~Sepp\"al\"ainen. Scaling for a one-dimensional directed polymer with boundary conditions. {\em Ann. Probab.} \textbf{40} no. 1 pp. 19-73. (2012)

\bibitem{SV10}
T.~Sepp\"al\"ainen and B.~Valk\'o. Bounds for scaling exponents for a 1+1 dimensional directed polymer in a Brownian environment. {\em ALEA} \textbf{6} pp.1-42. (2010)
\end{thebibliography}
\end{document}